\renewcommand{\geq}{\geqslant}
\renewcommand{\leq}{\leqslant}
\newcommand{\el}{\lambda^\epsilon}
\newcommand{\eP}{\Phi^\epsilon}
\newcommand{\A}{(\lambda)}
\newcommand{\Pf}{(-p)}
\newcommand{\OO}{\mathcal{O}_\delta}
\newcommand{\Ob}{\overline{\mathcal{O}}_\delta}
\newcommand{\C}{\mathcal{C}^\epsilon_\delta}
\newcommand{\Cd}{\mathcal{C}'_\delta}
\newcommand{\Sde}{\mathcal{S}^\epsilon_\delta}
\newcommand{\Ss}{\mathcal{S}_\delta}
\newcommand{\Cdj}{\mathcal{C}'_{\delta_j}}
\newtheorem{theorem}{Theorem}[section]
\newtheorem{lemma}[theorem]{Lemma}
\newtheorem{proposition}[theorem]{Proposition}
\newtheorem{remark}[theorem]{Remark}
\newtheorem{corollary}[theorem]{Corollary}
\newtheorem*{main-theorem}{Main Theorem}
\newtheorem*{remark*}{Remark}
\numberwithin{equation}{section}
\title{Stokes Waves with Vorticity}
\author{Vera~Mikyoung~Hur}
	\address{Department of Mathematics, University of Illinois at Urbana-Champaign,
	Urbana, IL 61801}
	\email{verahur@\allowbreak math.\allowbreak uiuc.\allowbreak edu}
\keywords{surface waves, Stokes, vorticity, degree, bifurcation}
\subjclass[2000]{76B15, 35J60, 47J15, 76B03}
\begin{document}

\maketitle

\begin{abstract}
The existence of periodic waves propagating downstream on the surface of a two-dimensional 
infinitely deep water under gravity is established for a general class of vorticities.
When reformulated as an elliptic boundary value problem in a fixed semi-infinite strip with a parameter,
the operator describing the problem is nonlinear and non-Fredholm.
A global connected set of nontrivial solutions is obtained via singular theory of bifurcation.
Each solution on the continuum has a symmetric and monotone wave profile. 
The proof uses a generalized degree theory, global bifurcation theory
and Wyburn's lemma in topology,
combined with the Schauder theory for elliptic problems and the maximum principle.  
\end{abstract}

\thispagestyle{plain}
\markboth{VERA MIKYOUNG HUR}{STOKES WAVES WITH VORTICITY}

\section{Introduction}\label{S:intro}

The problem of {\em surface water waves}, in its simplest form, concerns 
the two-dimensional dynamics of an incompressible inviscid fluid of infinite depth and 
the wave motion on its surface layer, acted upon by gravity. 
The effect of surface tension is neglected. 
Suppose for definiteness that in the $(x,y)$-Cartesian coordinates 
gravity acts in the negative $y$-direction and that the fluid at time $t$ occupies the region 
bounded above by the moving surface, given as the graph $y=\eta (t,x)$. 
In the fluid region 
the velocity field $(u(t,x,y),v(t,x,y))$ and the pressure $P(t,x,y)$ satisfy the Euler equations
\begin{equation}\label{E:euler}
\begin{aligned}
u_x+v_y=0, \quad& && u_t+uu_x+vu_y=-P_x, \\ 
& && v_t\,+uv_x+vv_y\,=-P_y-g,
\end{aligned}
\end{equation}
where $g>0$ denotes the acceleration due to gravity. Throughout, subscripts denote partial derivatives.
The flow is allowed to be {\em rotational}, characterized by the vorticity $\omega =v_{x}-u_{y}$. 
The kinematic and dynamic boundary conditions 
\begin{equation}\label{E:top}
v=\eta_t +u\eta_x \quad \text{and}\quad P=P_{atm}
\end{equation}
at the surface layer express, respectively, 
that the surface moves with the velocity of the fluid particles at the surface and that 
the pressure at the surface is constant atmospheric, denoted by $P_{atm}$. 
The boundary condition at the infinite bottom  
\begin{equation}\label{E:bottom}
(u,v) \to (0,0) \qquad \text{as $y \to -\infty$}
\end{equation}
states that the flow at great depths is practically at rest.

It is a matter of common experience that waves which may be typically observed 
on the surface of the ocean or the river are approximately periodic and
propagating of permanent form at a constant speed. 
Waves of this kind are referred to as {\em Stokes waves}.
Intuitively, they are symmetric waves whose profile rises and falls exactly once per wavelength.

Waves of Stokes' kind are among the few exact solutions of the water-wave problem\footnote{
The water-wave problem is greatly complicated by the nonlinearity at the moving surface, and
the existence of arbitrarily-shaped solutions as well as special solutions are not well understood.}
\eqref{E:euler}-\eqref{E:bottom}. 
To clarify their existence is therefore a fundamental mathematical issue, 
and it is the subject of investigation here. 
Furthermore, they are a genuine nonlinear phenomenon. 
In his formal yet far-reaching consideration \cite{Sto80}, Stokes observed that 
characteristics of actual water waves deviate significantly from what the linear theory predicts. 
In particular, he conjectured that a periodic wave of maximum height exists and
it is distinguished by a sharp cusp at the wave crest with the contained angle of 120 degree\footnote{
The linear theory gives 90 degree, in contrast.}.


When $\omega=0$, namely, in the {\em irrotational} setting, 
the existence theory of Stokes waves dates back to the construction of small-amplitude waves 
by Levi-Civita \cite{LC25} and independently by Nekrasov \cite{Nek51},
and it includes the global theory by Krasovskii \cite{Kra61} and by Keady and Norbury \cite{KeNo78}.
Stokes' conjecture is proved in the works of 
Amick, Fraenkel and Toland \cite{AFT, Tol} and McLeod \cite{McL}. 
These results are based on the reformulation of the problem as Nekrasov's integral equation
and they are reviewed in \cite{Tol96}.
Further advances are made in \cite{BDT00a, BDT00b, BuTo03}
based on the formulation of the problem as Babenko's pseudo-differential equation \cite{Bab87}.

While the zero-vorticity setting may serve as an approximation under certain circumstances 
and it dominates in the existing literature,
ocean currents typically carry vorticities in their wind-drifted boundary layers \cite{Mei}. 
Moreover, the governing equations of water waves allow for rotational motions. 
Gerstner \cite{Ger02} in 1809 (earlier than Stokes!) found an explicit formula 
for periodic traveling waves on deep water with a particular nonzero vorticity; 
in the irrotational setting, no traveling-wave solution of the water-wave problem 
\eqref{E:euler}-\eqref{E:bottom} is known in the closed form. 
Perhaps, more striking are periodic traveling waves over a current 
with a critical layer and with a closed streamline \cite{Wah09, CoVa}; 
such waves cannot exist in the irrotational setting. Vorticity, in addition, 
has subtle influence on the hydrodynamic stability \cite{HuLi08} of traveling water waves. 

The existence theory of irrotational Stokes waves is so quite complete that 
it is featured in a textbook \cite{BuTo03}, but their siblings, with vorticity, enjoy a different reputation.
Dubreil-Jacotin \cite{DJ34} used a partial hodograph transform 
to reformulate the original free boundary problem in a fixed domain
and she addressed the existence of small-amplitude waves by power series methods. 
Zeidler \cite{Zei73} later suggested to use a quasi-conformal mapping and 
treated small-amplitude waves in the finite-depth case as well as under gravity and surface tension. 
The global theory began only recently when Constantin and Strauss \cite{CoSt04} 
recognized that Dubreil-Jacotin's formulation could be regarded as 
an abstract operator equation in a Banach space. They employed the topological degree theory, 
as adapted by Healey and Simpson \cite{HeSi98} for a general class of nonlinear elliptic operators,
and the global bifurcation theory of Rabinowitz \cite{Rab71}, and in the finite-depth case
they obtained a global connected set of solutions for a general class of vorticities. 
In the infinite-depth case, the author \cite{Hur06} extended the methods in \cite{CoSt04}
but when the vorticity is small, non-negative and monotone with depth. 
As a matter of fact, the vorticity function of Gerstner's trochoidal waves \cite{Ger02} is not treated. 
The present purpose is to establish  for a general class of vorticities
a global bifurcation result for Stokes waves on deep water.
The main result (Theorem \ref{T:main}) does not have any restriction on the sign of the vorticity.

Following \cite{CoSt04}, there have been vigorous activities 
in studies of traveling water waves with vorticity.
In \cite{KoSt08a, KoSt08b}, waves found in \cite{CoSt04} are numerically computed. 
For arbitrary vorticities, small-amplitude solitary waves over channels of finite depth 
are constructed in \cite{Hur08a} and independently in \cite{GrWa08}. 
For arbitrary vorticities, symmetry property is studied 
in \cite{Hur07, CEW} for periodic waves of finite depth and in \cite{Hur08b} for solitary waves. 
Partial results regarding the Stokes conjecture on a ``limiting" wave are given in \cite{Var08, Var09}. 
Hydrodynamic stability of periodic waves of finite depth is studied in \cite{HuLi08}.

\

\noindent{\bf The main result}
Theorem \ref{T:main} states that for a general class of vorticities 
a global connected set of nontrivial Stokes-wave solutions exists and that 
the continuum contains a sequence of solutions for which 
either the speed of wave propagation becomes arbitrarily large (cavitation)
or the relative flow speed somewhere in the fluid region becomes arbitrarily close to zero (stagnation). 
Furthermore, Theorem \ref{T:main-} states that if the vorticity is non-positive and monotone with depth
then stagnation can only  occur at the wave crest.
If the vorticity is non-negative and monotone with depth, Theorem \ref{T:main+} states that
in the event of stagnation, it occurs either at the infinite bottom or on the free surface. 
For non-negative and monotone vorticities, 
if the relative flow speed is bounded along the continuum and if vorticity is, in addition, sufficiently small  
then stagnation occurs either at the infinite bottom or at the wave crest. 

The class of vorticity functions admissible in Theorem~\ref{T:main},
extending considerably that in the earlier work \cite{Hur06},
does include the vorticity distribution of Gerstner's trochoidal waves \cite{Ger02}.
Furthermore, for small, non-negative and monotone vorticities, 
Theorem \ref{T:main+} improves the result in \cite{Hur06} in that 
the smallness condition of the vorticity \eqref{C:small} is more straightforward than that in \cite{Hur06}.

Cavitation is considered to be unphysical. In fact, in the irrotational setting \cite{Tol96, BuTo03} 
as well as in the finite-depth case with vorticity \cite{CoSt04}, 
speed of wave propagation is a priori bounded and hence 
a ``limiting" wave along the continuum must exhibit stagnation. 
An important open problem is to obtain bounds for speed of wave propagation
in the infinite-depth case and with vorticity. 
In the irrotational setting, a bound for traveling speed \cite{Tol96, BuTo03} is established
by studying the kernel associated to Nekrasov's integral equation.
Unfortunately, such an integral representation of solutions \cite{Nek51, Tol96} 
critically hinges upon irrotationality, and presently no analog in the rotational setting is available. 
Section \ref{S:Zei73} presents the reformulation of the problem with vorticity,
which is potentially useful in obtaining an integral representation of solutions.

Another important open problem is to show 
for non-positive vorticities that stagnation occurs at the wave crest,
i.e., to remove the monotonicity assumption from Theorem \ref{T:main-}. 
A difficulty lies in that the relative flow speed does not possess a maximum principle. 
In the finite-depth case, the result is established in \cite{CoSt07, Var08}
by studying the maximum of the relative horizontal speed along the free surface, 
along the bottom and below the wave crest.

\

\noindent{\bf Ideas of the proof}
The present treatment is influenced by \cite{CoSt04, Hur06}, 
based on the reformulation via a partial hodograph transform of 
the original semilinear elliptic boundary value problem with free surface (Section \ref{SS:formulation}) 
as a quasilinear elliptic boundary value problem in a fixed semi-infinite strip 
(Section \ref{SS:reformulation}). 
In the finite-depth case \cite{CoSt04}, the Fredholm property of the operator describing the problem 
follows in the standard manner by the Schauder theory for elliptic problems 
and the embedding properties of H\"older spaces of functions in a bounded domain.
Consequently, a global connected set of solutions is obtained as an application of 
a generalized degree theory \cite{HeSi98} and global bifurcation theory \cite{Rab71}. 
In the infinite-depth case, unfortunately, the unboundedness of the domain prevents 
the operator from being Fredholm,
and thus a degree-theoretic argument is not directly applicable; see Section \ref{SS:methods}. 
It is noteworthy that in the irrotational setting (of infinite-depth) \cite{Tol96, BuTo03}, 
the problem further reduces to an equation for a quantity 
defined at the one-dimensional free surface, and specialized theory of bifurcation applies.

In order to overcome the failure of Fredholm property, 
as is done in \cite{Hur06}, the operator is approximated by a sequence of Fredholm operators. 
The framework of a generalized degree theory and global bifurcation theory 
then applies to each approximate problem, and 
a global connected set of its nontrivial solutions is constructed;
see Section~\ref{S:degree}, Section~\ref{SS:local} and Section~\ref{SS:global-approx}. 
The heart of the matter of the proof is in Section \ref{SS:global}
to take the limit of the continua of approximate solutions 
and to show that the limit set of nontrivial solutions of the original problem 
is connected and unbounded in the Banach space in use.
The connectedness of the limit set necessitates, in light of Wyburn's lemma 
(\cite{Why}, see also Theorem~\ref{T:A6}), 
a uniform decay at infinity of nontrivial solutions of the original problem. 
In the earlier work \cite{Hur06}, {\em ad hoc} arguments enforce that
for a restricted class of vorticities solutions decay exponentially at infinity. 
Here, an exponential decay of solutions is established in Lemma~\ref{L:exp-decay} 
by consideration of a Phragm\'en-Lindel\"of theorem, 
and it does not require any special property of the vorticity.
Then, in Lemma \ref{L:nodal} robust nodal properties along the continuum assert that 
the continuum of nontrivial solutions of the original problem is unbounded.

\section{Formulation and the main result}\label{S:formulation}

A detailed account is given of the passage from the traveling-wave problem 
of \eqref{E:euler}-\eqref{E:bottom} to an abstract operator equation in a Banach space.
The main results are stated. 
The failure of the Fredholm property of the operator is discussed, 
and approximate problems are designed.

\subsection{The vorticity-stream function formulation}\label{SS:formulation}

The traveling-wave problem of \eqref{E:euler}-\eqref{E:bottom} seeks for a solution,
for which the wave profile, the velocity field and the pressure 
have the space-time dependence $(x-c t, y)$, where $c>0$ is the speed of wave propagation. 
In the frame of reference moving with the speed~$c$, 
the wave profile and the flow underneath it appear to be stationary. Let 
\[ \Omega_\eta = \{ (x,y): -\infty<x<\infty, \, -\infty<y<\eta(x)\}, \quad S_\eta=\{ (x,\eta(x)): -\infty<x<\infty\}\]
denote, respectively, the (stationary) fluid domain and the free surface.

In studies of traveling water waves, it is customary to introduce the (relative) stream function $\psi(x,y)$,
defined $\overline{\Omega}_\eta$ as
\begin{equation} \label{D:stream}
\psi _{x}=-v,\qquad \psi _{y}=u-c 
\end{equation}%
and $\psi (0,\eta (0))=0$. 
Accordingly, we formulate the traveling-wave problem of \eqref{E:euler}-\eqref{E:bottom} as
the free boundary problem as: 

for a function $\gamma(r)$ defined for $r \in [0,\infty)$ and for a parameter $c>0$, 
find a curve $y=\eta(x)$ defined for $x \in \mathbb{R}$ 
and a function $\psi(x,y)$ defined in $\overline{\Omega}_\eta$ such that 
\begin{subequations}\label{E:stream}
\begin{alignat}{2}
\psi_y<&0\qquad  & &\text{in}\quad \overline{\Omega}_\eta{,}\label{E:no-stag} \\
\intertext{and} 
-\Delta\psi  =& \gamma(\psi) \qquad & & \text{in}\quad \Omega_\eta{,} \\
\psi  =& 0\quad & & \text{on}\quad S_\eta{,} \\
|\nabla\psi|^{2}+  2&gy=0 \qquad & & \text{on}\quad S_\eta{,} \\
\nabla \psi \to (&0,-c)\quad & & \text{as\quad $y \to -\infty$ uniformly for $x$}.
\end{alignat}
\end{subequations}
The derivation of \eqref{E:stream} is detailed in \cite[Section 2]{Hur06}. 

The condition \eqref{E:no-stag} means that no stagnation point\footnote{
By a stagnation point we mean a point where $\psi_y=0$. 
It is a slight abuse of terminology, since traditionally $|\nabla \psi|=0$ at a stagnation point.} 
exists in the fluid region. 
Field observations \cite{Lig78} as well as laboratory experiments \cite{ThKl97}
indicate that for wave patterns which are not near the spilling or breaking state, 
the speed of wave propagation is in general considerably larger 
than the horizontal velocity of any water particle.

The no-stagnation condition \eqref{E:no-stag} guarantees that \cite{Hur06}
the vorticity is globally a function of the stream function, denoted by $\omega =\gamma (\psi)$. 
It is reasonable to require that $\gamma(r) \to 0$ as $r \to \infty$. Furthermore, the function
\begin{equation}
\Gamma(p)=\int_0^p \gamma(-p')dp' 
\end{equation}
is required to be bounded for $-\infty<p\leq 0$. Let
\[
\Gamma_{\inf}= \inf_{-\infty<p\leq0}\Gamma(p),\qquad
\Gamma_{\infty}=\int^{-\infty}_0 \gamma(-p)dp.
\]

If the vorticity is non-negative and monotone with depth, i.e.
if $\gamma(r) \geq 0$ and $\gamma'(r) \leq 0$ for $r \in [0,\infty)$,
then \eqref{E:no-stag} is redundant. 
Indeed, by the maximum principle and the Hopf boundary lemma, 
any solution $\psi$ of (\ref{E:stream}b) subject to (\ref{E:stream}e) must acquire \eqref{E:no-stag}.

The boundary condition (\ref{E:stream}c) means that the free surface itself makes a streamline,
while (\ref{E:stream}d) is a manifestation of Bernoulli's law 
which states that the quantity $|\nabla \psi|^2+2gy$ is a constant on the free surface. 
The Bernoulli's constant only serves to relocate the origin in the $y$-direction 
and by adding an arbitrary constant to (\ref{E:stream}d) changes 
neither the free surface nor the velocity distribution in the fluid region.
Thus, without loss of generality, the constant is taken to be zero.
The hydrostatic pressure in the fluid region is given by 
\begin{equation}\label{E:P}
P(x,y)=P_{atm}-\frac{1}{2}|\nabla \psi (x,y)|^{2}-gy+\Gamma(-\psi(x,y)).
\end{equation}

In view to the {\em Stokes wave problem}, 
\eqref{E:stream} is further supplemented with the periodicity and symmetry conditions 
that $\eta(x)$ and $\psi(x,y)$ are even and $2L$-periodic in the $x$-variable, 
where $2L>0$ is the wavelength.

In this setting, $L$ and $c$ are considered as parameters whose values form part of the solution.
The wavelength $L$, in existence theory, is independent of other parameters, 
and hence it is held fixed in the sequel. 
On the other hand, the speed of wave propagation $c$ serves as the bifurcation parameter 
and it varies along a solution continuum.

In case $\gamma= 0$, namely in the irrotational setting, 
(\ref{E:stream}b) reduces to the Laplace equation 
and the nonlinearity of the problem resides only at the free surface. 
A nontrivial vorticity, in stark contrast, introduces additional nonlinearity 
in the field equation (\ref{E:stream}b), and it significantly complicates analysis.

\subsection{The main results}\label{SS:results}

For a nonnegative integer $k$ and for $\alpha \in (0,1)$,
a domain $\Omega$ in $\mathbb{R}^2$ is called a $C^{k+\alpha}$ domain
if each point on its boundary, denoted by $\partial \Omega$, has a neighborhood in which 
$\partial \Omega$ is the graph of a $C^{k+\alpha}$ function.
Given a $C^{k+\alpha}$ domain $\Omega$ in the $(x,y)$-plane (not necessarily bounded), we define
\begin{equation}
C^{k+\alpha}_{per}(\overline{\Omega})=\{\,f \in C^{k+\alpha}(\overline{\Omega}): 
\text{$f$ is even and $2L$-periodic in the $x$-variable}\,\},
\end{equation}
where $C^{k+\alpha}(\overline{\Omega})$ is a H\"{o}lder space under the norm 
\begin{multline*}
\|f\|_{C^{k+\alpha}(\overline{\Omega})}=
\sum^{k}_{k_1+k_2=0}\sup_{\overline{\Omega}}|\partial^{k_2}_y\partial^{k_1}_x f(x,y)|\\
+\sup_{k_1+k_2=k}\,\sup_{\substack{(x,y)\neq (x',y')\\ \overline{\Omega}}}
\frac{|\partial^{k_2}_y\partial^{k_1}_x f(x,y) - \partial^{k_2}_y\partial^{k_1}_x f(x',y')|}
{((x-x')^2+(y-y')^2)^{\alpha/2}}.
\end{multline*}
This notation is extended in an obvious way to the case 
when $\alpha=0$ and to functions of a single variable.

The main result of this article concerns the existence of 
nontrivial Stokes waves on deep water for a general class of vorticities.

\begin{theorem}\label{T:main}
Let $L>0$ be held fixed. 
Suppose that the vorticity function $\gamma \in C^{1+\alpha}([0,\infty))$, $\alpha \in (0,1)$, 
satisfies that $\gamma(r) \in O(r^{-2-2\rho})$ as $r \to \infty$ for some $\rho>0$ and that
\begin{equation}\label{C:bifurcation}
\int^0_{-\infty} \left( 2(2\Gamma(p)-2\Gamma_{\inf})^{3/2}
+\left(\frac{\pi}{L}\right)^2(2\Gamma(p)-2\Gamma_{\inf})^{1/2} \right) e^{2p}dp <g.
\end{equation}

There exists a connected set $\mathcal{C}$ of solution triples $(c, \eta, \psi)$ in the space 
$\mathbb{R}_+ \times C^{3+\alpha}_{per}(\mathbb{R})
\times C^{3+\alpha}_{per}(\overline{\Omega}_\eta)$
of the system {\rm (\ref{E:stream}b)-(\ref{E:stream}e)} such that

\renewcommand{\labelenumi}{{\rm(\roman{enumi})}}
\begin{enumerate}
\item $\mathcal{C}$ contains a trivial solution which corresponds to 
a horizontal shear flow $\psi_x= 0$ under the flat surface $\eta= 0$ and
\item there is a sequence of solution triples  $\{(c_j, \eta_j, \psi_j)\}$ in $\mathcal{C}$, for which
\[
\text{either\/}\quad
{\lim_{j \to \infty} c_j= \infty }
\quad \text{or\/} \quad
{\lim_{j\to \infty}\sup_{\overline{\Omega}_{\eta_j}} \partial_y \psi_j(x,y)} = 0.
\]
\end{enumerate}
Moreover, each nontrivial solution triple  $(c, \eta, \psi)$ in $\mathcal{C}$ enjoys the following properties:
\begin{enumerate}\addtocounter{enumi}{2}%
\item $\eta$ and $\psi$ are even and $2L$-periodic in the $x$-variable;
\item $\eta$ has a single maximum (crest) at $x=0$ and 
a single minimum (trough) at $x=\pm L$ per wavelength;
\item the wave profile is monotone from crest to trough, i.e., 
$\eta_x(x)>0$ for $-L<x<0$ and $\eta_x(x)<0$ for $0<x<L$;
\item the speed of wave propagation is larger than the horizontal particle velocity 
everywhere in the fluid region, i.e. $\psi_y<0$ in the fluid region; and 
\item $\psi_x(x,y)>0$ for $-L<x<0$ and $\psi_x(x,y)<0$ for $0<x<L$.
\end{enumerate}
\end{theorem}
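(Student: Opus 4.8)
The plan is to carry out the program sketched in the introduction: pass from the free boundary problem to a quasilinear elliptic problem on a fixed domain, approximate the resulting non-Fredholm operator by a sequence of Fredholm ones, run a generalized degree and global bifurcation argument on each approximation, and then take the limit with the help of a uniform decay estimate. First I would apply a partial hodograph (Dubreil-Jacotin) change of variables $q=x$, $p=-\psi(x,y)$, which by the no-stagnation condition \eqref{E:no-stag} is invertible; the fluid domain $\overline{\Omega}_\eta$ then maps onto the fixed semi-infinite strip $R=\{(q,p):-L<q<L,\ -\infty<p<0\}$, and the unknown becomes the height function $h(q,p)=y$. The system (\ref{E:stream}b)--(\ref{E:stream}e) transforms into a quasilinear elliptic equation for $h$ in $R$, with a nonlinear Bernoulli condition on $\{p=0\}$, a Dirichlet-type condition as $p\to-\infty$, and even/$2L$-periodic conditions in $q$. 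Shear flows correspond to $q$-independent solutions $H(p;c)$, so after subtracting them, $w=h-H$, the problem takes the abstract form $\mathcal F(c,w)=0$ in a Hölder space of even, $2L$-periodic functions on $\overline R$, with $\mathcal F(c,0)=0$ for every $c>0$.

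Second, I would run the degree and bifurcation machinery on approximate problems. On a bounded domain $\mathcal F(c,\cdot)$ would be a nonlinear Fredholm map of index zero by the Schauder theory together with compact embeddings of Hölder spaces; on the unbounded strip those embeddings fail and $\mathcal F$ is not Fredholm (Section~\ref{SS:methods}). I would therefore introduce a family $\mathcal F_\delta$ obtained by modifying the problem near $p=-\infty$ (truncating the strip, or adding a small coercive perturbation) so that each $\mathcal F_\delta$ is Fredholm of index zero and $\mathcal F_\delta\to\mathcal F$ in an appropriate sense. Linearizing at the trivial branch reduces to a Sturm--Liouville problem in $p$, and condition \eqref{C:bifurcation} is precisely what forces a critical speed $c=c^*_\delta>0$ at which a simple eigenvalue crosses zero, so that the Healey--Simpson generalized degree \cite{HeSi98} jumps across $c^*_\delta$. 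The global bifurcation theorem \cite{Rab71}, in its degree-theoretic form, then yields for each $\delta$ a connected set $\mathcal C_\delta$ of nontrivial solutions of the approximate problem which either is unbounded or meets the trivial branch again. A nodal-property argument (the sign conditions $\psi_x>0$ on $-L<q<0$, $\eta_x>0$ on $-L<x<0$, etc., hold near bifurcation and, by the maximum principle, the Hopf lemma, and Serrin's boundary-point lemma, cannot degenerate along $\mathcal C_\delta$) excludes the second alternative, so $\mathcal C_\delta$ is unbounded and every triple on it satisfies the symmetry and monotonicity properties (iii)--(vii).

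Third, and this is the heart of the matter, I would pass to the limit $\delta\to0$ to extract a single connected continuum $\mathcal C$ for the original problem. Wyburn's lemma (\cite{Why}, Theorem~\ref{T:A6}) delivers connectedness of the limit set $\limsup_\delta\mathcal C_\delta$ provided one has uniform compactness, i.e.\ control of the solutions as $p\to-\infty$. This is supplied by Lemma~\ref{L:exp-decay}: a Phragmén--Lindelöf argument shows that every nontrivial solution, and every approximate solution, decays exponentially as $p\to-\infty$, uniformly on bounded subsets of the solution set, so the solutions lie in a weighted Hölder space whose embedding is compact. Applying Wyburn's lemma then produces a connected set $\mathcal C$ of solutions of (\ref{E:stream}b)--(\ref{E:stream}e) containing a trivial (shear flow) solution, which gives (i); the uniform nodal properties pass to the limit, giving (iii)--(vii). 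Finally, Lemma~\ref{L:nodal} shows that $\mathcal C$ cannot be bounded in $\mathbb R_+\times C^{3+\alpha}_{per}(\mathbb R)\times C^{3+\alpha}_{per}(\overline{\Omega}_\eta)$; since no a priori bound is available for $c$ or for $\inf_{\overline{\Omega}_\eta}|\psi_y|$, unboundedness of $\mathcal C$ forces the dichotomy (ii): along some sequence on $\mathcal C$, either $c_j\to\infty$ (cavitation) or $\sup_{\overline{\Omega}_{\eta_j}}\partial_y\psi_j\to0$ (stagnation).

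The step I expect to be the main obstacle is the third one, specifically the uniform exponential decay underpinning the application of Wyburn's lemma. It must be established for a general vorticity subject only to the decay hypothesis $\gamma(r)\in O(r^{-2-2\rho})$, with no assumption on the sign or monotonicity of $\gamma$ --- which is exactly where the earlier work \cite{Hur06} was forced to restrict the admissible class. Once a vorticity-insensitive Phragmén--Lindelöf decay estimate is in hand, the remaining ingredients (Schauder theory, the generalized degree of \cite{HeSi98}, the global bifurcation theorem of \cite{Rab71}, the maximum principle, and the robust nodal analysis) combine in the by-now-standard way, as in \cite{CoSt04, Hur06}.
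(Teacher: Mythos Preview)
Your architecture is the paper's: partial hodograph transform, an $\epsilon$-perturbation to restore the Fredholm property, Healey--Simpson degree and Rabinowitz bifurcation on each approximant, nodal preservation via maximum principle and Hopf/Serrin lemmas, the Phragm\'en--Lindel\"of decay (Lemma~\ref{L:exp-decay}) for uniform compactness, and Whyburn's lemma to pass to the limit. You also correctly identify the decay estimate as the place where \cite{Hur06} had to restrict $\gamma$.

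There is, however, a genuine gap in the final step. The paper carries \emph{two} parameters, not one: $\epsilon>0$ is the coercive perturbation $F^\epsilon=(F_1-\epsilon w,F_2)$ that makes the linearization Fredholm, while $\delta>0$ cuts out the open set $\mathcal{O}_\delta$ (see \eqref{D:O_delta}) on which the problem is uniformly elliptic and the boundary operator uniformly oblique. Consequently the global alternative for $\mathcal{C}^\epsilon_\delta$ is \emph{threefold} (Proposition~\ref{T:global}): unbounded, or returns to the trivial line, or meets $\partial\mathcal{O}_\delta$. The nodal argument kills only the middle option; the third survives (Theorem~\ref{T:global1a} and Theorem~\ref{T:global2}) and is essential, because the three defining inequalities of $\mathcal{O}_\delta$ each encode a mode of degeneration. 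Your sentence ``excludes the second alternative, so $\mathcal{C}_\delta$ is unbounded'' is therefore not justified.

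Relatedly, the passage from ``$\mathcal{C}'_\delta$ is unbounded or meets $\partial\mathcal{O}_\delta$'' to the dichotomy (ii) is not automatic and is where most of the work in the actual proof of Theorem~\ref{T:main} (Section~\ref{S:proof}) lies. Unboundedness in $C^{3+\alpha}$ could in principle come from higher derivatives blowing up while $c$ and $\sup\psi_y$ stay controlled; this is closed by the regularity reduction Lemma~\ref{T:regularity}. One is then left with six cases (three from unboundedness of $\lambda$, $\|w\|_{C^0}$, $\|w_p\|_{C^0}$, and three from the boundary conditions of $\mathcal{O}_\delta$ as $\delta\to 0$), each of which must be separately shown to force $c_j\to\infty$ or $\sup\partial_y\psi_j\to 0$; the case $a^{-1}(\lambda)+w_p\to 0$ in particular needs the pressure estimate \eqref{E:pressure1}. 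Finally, your invocation of Lemma~\ref{L:nodal} is off: that lemma is a closedness statement (any trivial limit of nontrivial solutions must have $\lambda=\lambda^0$), not an unboundedness statement for $\mathcal{C}$.
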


The condition \eqref{C:bifurcation} ensures local bifurcation. 
A more general condition for local bifurcation is in \eqref{C:bifurcation-g}.
If $\Gamma$ is small, then \eqref{C:bifurcation} is valid.

Theorem \ref{T:main} presents two alternatives in~(ii).
If the first alternative realizes, 
the speed of wave propagation along the continuum becomes unboundedly large,
and correspondingly, the hydrostatic pressure becomes unboundedly low. 
This phenomenon is called {\em cavitation}. 
The second alternative means that the continuum contains waves 
whose relative flow speed somewhere in the fluid region becomes arbitrarily close to zero.
In other words, there is a region of almost stagnant fluid, a region carried along by the traveling wave. 
This phenomenon is called {\em stagnation}.

If the vorticity is monotone with depth, then the conclusion in (ii) can be refined. 

\begin{theorem}\label{T:main-}
Under the hypotheses of Theorem \ref{T:main}, 
if, in addition, $\gamma(r) \leq 0$ and $\gamma'(r) \geq 0$ for $0 \leq r<\infty$,
then the conclusions of Theorem \ref{T:main} holds with {\rm (ii)} replaced by 
\renewcommand{\labelenumi}{{\rm($\text{\roman{enumi}}'$)}}
\begin{enumerate}\addtocounter{enumi}{1}%
\item there is a sequence of solution triples $\{(c_j,\eta_j,\psi_j)\} \subset \mathcal{C}$, for which 
\[
\text{either}\quad{\lim_{j \to \infty} c_j= \infty}\quad
\text{or}\quad{\lim_{j\to \infty} \partial_y \psi_j(0,\eta_j0))} = 0.
\]
\end{enumerate}
\end{theorem}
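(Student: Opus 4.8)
\smallskip
\noindent\emph{Sketch of the proof.}
The plan is to retain the entire statement of Theorem \ref{T:main} and only sharpen the second alternative in~(ii). For a nontrivial triple $(c,\eta,\psi) \in \mathcal{C}$, one first notes that the no-stagnation property together with (\ref{E:stream}c) and (\ref{E:stream}e) gives $\psi \geq 0$ in $\overline{\Omega}_\eta$, with $\psi = 0$ on $S_\eta$ and $\partial_y\psi \to -c$ uniformly at infinite depth. Differentiating (\ref{E:stream}b) in $y$ yields $\Delta(\partial_y\psi) + \gamma'(\psi)\,\partial_y\psi = 0$ in $\Omega_\eta$; now the extra hypothesis $\gamma'(r) \geq 0$ for $r \geq 0$, applied at $\psi \geq 0$, together with $\partial_y\psi < 0$, makes $-\Delta(\partial_y\psi) = \gamma'(\psi)\,\partial_y\psi \leq 0$, so $\partial_y\psi$ is subharmonic in $\Omega_\eta$. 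Since $\partial_y\psi$ is bounded and tends to the constant $-c$ uniformly as $y \to -\infty$, a Phragm\'en--Lindel\"of argument in the spirit of Lemma~\ref{L:exp-decay} (or a plain truncation of the domain at $y = -M$) shows that
\[
\sup_{\overline{\Omega}_\eta}\,\partial_y\psi \;=\; \max\!\Big(\,\sup_{S_\eta}\,\partial_y\psi,\ -c\,\Big).
\]
In particular, along any sequence in $\mathcal{C}$ with $c_j$ bounded and $\sup_{\overline{\Omega}_{\eta_j}}\partial_y\psi_j \to 0$, the supremum is, for large $j$, attained at a point of $S_{\eta_j}$.

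The heart of the matter is then to show that this free-surface supremum sits at the crest, i.e. $\sup_{S_\eta}\partial_y\psi = \partial_y\psi(0,\eta(0))$. I would start from the identity forced by $\psi = 0$ on $S_\eta$ (so $\nabla\psi \perp S_\eta$ there) and the Bernoulli condition $|\nabla\psi|^2 = -2g\eta \geq 0$ on $S_\eta$, namely
\[
\partial_y\psi(x,\eta(x)) \;=\; -\,\frac{\sqrt{-2g\eta(x)}}{\sqrt{1+\eta_x(x)^2}}, \qquad x \in \mathbb{R},
\]
which by evenness may be studied on $[0,L]$. At the trough, $\eta_x(\pm L) = 0$ and $\eta(\pm L) \leq \eta(0)$, so $\partial_y\psi(\pm L,\eta(\pm L)) \leq \partial_y\psi(0,\eta(0))$ and the supremum is not located there. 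If instead it were attained at an interior point $x_0 \in (0,L)$, then $(x_0,\eta(x_0))$ would be a global maximum of the subharmonic function $\partial_y\psi$, so Hopf's boundary-point lemma applies; combining this with the fact that $x_0$ is an interior critical point of $x \mapsto \partial_y\psi(x,\eta(x))$ and with the restriction of (\ref{E:stream}b) to $S_\eta$, namely $\partial_{xx}\psi + \partial_{yy}\psi = -\gamma(0) \geq 0$ there (which uses $\gamma(0) \leq 0$), one extracts $\partial_{yy}\psi(x_0,\eta(x_0)) > 0$ and $\eta_{xx}(x_0) = -g/(\partial_y\psi(x_0,\eta(x_0)))^2$, and then plays these against the strict monotonicity $\eta_x(x_0) < 0$ of the profile and the sign of $\gamma'$ to reach a contradiction, forcing $x_0 = 0$. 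This last step is the one I expect to be the main obstacle: it is precisely where the non-positivity and the monotonicity of the vorticity are indispensable, and it is the infinite-depth (hence more delicate, since there is no bottom boundary to exploit) analogue of the free-surface and corner arguments of the finite-depth theory in \cite{CoSt04, CoSt07, Var08}; I anticipate needing, besides the above, Serrin's corner-point lemma near the trough and possibly an auxiliary comparison function for $\partial_y\psi$ on a one-sided surface neighbourhood.

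With the crest localization in hand, the theorem follows at once: given the sequence $\{(c_j,\eta_j,\psi_j)\} \subset \mathcal{C}$ of Theorem \ref{T:main}(ii), if $c_j \to \infty$ along a subsequence we are in the first alternative of the new~(ii$'$); otherwise $\{c_j\}$ is bounded and necessarily $\sup_{\overline{\Omega}_{\eta_j}}\partial_y\psi_j \to 0$, whence by the first paragraph the supremum equals $\sup_{S_{\eta_j}}\partial_y\psi_j = \partial_y\psi_j(0,\eta_j(0))$ for $j$ large, so $\partial_y\psi_j(0,\eta_j(0)) \to 0$, which is the second alternative of~(ii$'$). Conclusions (i) and (iii)--(vii) are inherited verbatim from Theorem \ref{T:main}.
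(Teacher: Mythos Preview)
Your overall architecture matches the paper's exactly: reduce to subharmonicity of $\partial_y\psi$ (via $\Delta\partial_y\psi=-\gamma'(\psi)\partial_y\psi\geq 0$ when $\gamma'\geq 0$ and $\partial_y\psi<0$), push the supremum to the free surface since $\partial_y\psi\to -c$ at infinite depth, and then localize on $S_\eta$ to the crest. The first two steps are precisely the paper's Lemma~\ref{L:min-max}.

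Where you diverge is the third step, and this is where your sketch remains genuinely open. You propose to argue by contradiction via Hopf's lemma applied directly to $\partial_y\psi$ at a putative interior surface maximum $x_0\in(0,L)$, extracting $\psi_{yy}(x_0,\eta(x_0))>0$ and $\eta_{xx}(x_0)=-g/\psi_y^2$, and then hoping to contradict $\eta_x(x_0)<0$. But you do not actually close this: the ingredients you list (sign of $\psi_{yy}$, the PDE on $S_\eta$, the sign of $\gamma'$) do not obviously combine into an inconsistency, and you yourself flag the need for an auxiliary comparison function and Serrin corner arguments.

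The paper avoids this entirely by applying Hopf not to $\partial_y\psi$ but to the pressure-type function $B=\tfrac12|\nabla\psi|^2+gy-\Gamma(-\psi)$ (Lemma~\ref{L:-pressure}). When $\gamma\leq 0$ one has $g+\gamma(\psi)\psi_y\geq g>0$, so condition~\eqref{C:negative} holds automatically; then $B$ satisfies an elliptic inequality with the right sign, $B\equiv 0$ on $S_\eta$, and $B_y\to g$ at depth, so Hopf gives $\partial_n B>0$ on $S_\eta$ and hence $B_x$ has the sign of $\eta_x$. The identity
\[
\frac{d}{dx}\Big(\tfrac12\psi_y^2(x,\eta(x))\Big)=B_x(x,\eta(x))
\]
(Lemma~\ref{L:monotone}) then yields monotonicity of $\psi_y$ along $S_\eta$ in one stroke, with the maximum at the crest. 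This is the missing key lemma in your route: rather than a delicate boundary analysis of $\partial_y\psi$, the paper transfers the Hopf argument to $B$, for which the boundary data ($B=0$ on $S_\eta$) is tailor-made.
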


\begin{theorem}\label{T:main+} 
Under the hypotheses of Theorem \ref{T:main}, 
if, in addition, $\gamma(r)\geq 0$ and $\gamma'(r) \leq 0$ for $0\leq r<\infty$, 
then the conclusions of Theorem \ref{T:main} holds with {\rm (ii)} replaced by 
\renewcommand{\labelenumi}{{\rm($\text{\roman{enumi}}''$)}}
\begin{enumerate}\addtocounter{enumi}{1}%
\item there is a sequence of solution triples $\{(c_j,\eta_j,\psi_j)\} \subset \mathcal{C}$, for which 
\[
\text{either}\quad{\lim_{j \to \infty} c_j= \infty}
\quad \text{or\/}\quad {\lim_{j \to \infty} c_j= 0}\quad \text{or}\quad
{\lim_{j\to \infty}\max_{0\leq x\leq L} \partial_y \psi_j(x,\eta_j(x))} = 0.
\]
\end{enumerate}

If $\psi_y(\pm L, \eta(\pm L))<M$ for all $(c,\eta,\psi) \in \mathcal{C}$.
and if, in addition, $\gamma(0)$ is sufficiently small so that 
\begin{equation}\label{C:small}
 g+\gamma(0)M \geq 0,
\end{equation}
then {\rm (ii$''$)} is further replaced by 
\renewcommand{\labelenumi}{{\rm($\text{\roman{enumi}}'''$)}}
\begin{enumerate}\addtocounter{enumi}{1}%
\item there is a sequence of solution triples $\{(c_j,\eta_j,\psi_j)\} \subset \mathcal{C}$, for which 
\[\text{either}\quad{\lim_{j \to \infty} c_j= 0}\quad \text{or\/}\quad 
\lim_{j\to \infty}\partial_y \psi_j(0,\eta_j(0)) = 0.\]
\end{enumerate}
\end{theorem}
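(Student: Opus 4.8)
\emph{Strategy.} The plan is to take Theorem~\ref{T:main} as the point of departure and to sharpen its alternative (ii) using the two sign conditions $\gamma\geq 0$ and $\gamma'\leq 0$. For a nontrivial triple $(c,\eta,\psi)\in\mathcal{C}$ set $w=\partial_y\psi$. Differentiating the field equation (\ref{E:stream}b) in $y$ yields the linear elliptic equation
\[
\Delta w+\gamma'(\psi)\,w=0\qquad\text{in }\Omega_\eta,
\]
whose zeroth-order coefficient $\gamma'(\psi)$ is $\leq 0$. By property (vi) of Theorem~\ref{T:main}, together with the Hopf boundary lemma at $S_\eta$ (as in the discussion following \eqref{E:P}), one has $w<0$ on $\overline{\Omega}_\eta$; hence $\Delta w=-\gamma'(\psi)w\leq 0$, so $w$ is superharmonic and its infimum over $\overline{\Omega}_\eta$ is attained on $S_\eta$ or approached as $y\to-\infty$, where by (\ref{E:stream}e) it equals $-c$. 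The relative horizontal speed $V:=-w>0$ solves the same equation and is subharmonic (again because $\gamma'\leq 0$). ``Stagnation'' is the statement $\inf_{\overline{\Omega}_{\eta_j}}V_j\to 0$ for a sequence $(c_j,\eta_j,\psi_j)\subset\mathcal{C}$; one must show it forces $c_j\to 0$ (the bottom, where $V_j\equiv c_j$) or $\min_{S_{\eta_j}}V_j\to 0$, which by evenness equals $-\max_{0\leq x\leq L}\partial_y\psi_j(x,\eta_j(x))$.

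\emph{Localizing stagnation to the boundary.} Suppose Theorem~\ref{T:main}(ii) produces a sequence with $\sup_{\overline{\Omega}_{\eta_j}}\partial_y\psi_j\to 0$ but, arguing by contradiction, $c_j\geq 2\delta$ and $\min_{S_{\eta_j}}V_j\geq 2\delta$ for some $\delta>0$ along the sequence. Applying Lemma~\ref{L:exp-decay} to $\nabla\psi_j-(0,-c_j)$ gives a depth $Y$, \emph{independent of $j$}, with $V_j\geq c_j-\delta\geq\delta$ on $\overline{\Omega}_{\eta_j}\cap\{y\leq -Y\}$. Thus on the bounded domain $\Omega_{\eta_j}\cap\{-Y<y<\eta_j(x)\}$ (bounded once the auxiliary bound on $\sup_j|\eta_j(0)|$, forced by the very assumptions $c_j\not\to 0$, $\min_{S_{\eta_j}}V_j\not\to 0$ together with the Bernoulli relation (\ref{E:stream}d), is in hand) the positive solution $V_j$ of $\Delta V_j+\gamma'(\psi_j)V_j=0$, with $|\gamma'(\psi_j)|\leq K:=\sup_{[0,\infty)}|\gamma'|<\infty$ (finite since $\gamma\in C^{1+\alpha}([0,\infty))$ and $\gamma(r)\in O(r^{-2-2\rho})$), has boundary data $\geq\delta$. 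Comparing $V_j$ from below with the solution of $\Delta u-Ku=0$ carrying that boundary data --- equivalently, invoking the interior and boundary Harnack inequalities for $\Delta+\gamma'(\psi_j)$ on a domain of uniformly bounded size --- produces a uniform lower bound $V_j\geq\delta'>0$ on the region, hence on all of $\overline{\Omega}_{\eta_j}$. This contradicts $\inf V_j\to 0$ and yields (ii$''$).

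\emph{Refinement under smallness.} For the final assertion assume in addition $\partial_y\psi(\pm L,\eta(\pm L))<M$ along $\mathcal{C}$ and $g+\gamma(0)M\geq 0$. Since $\psi_x=0$ at $x=\pm L$, (\ref{E:stream}d) gives $\partial_y\psi(\pm L,\eta(\pm L))^2=-2g\eta(\pm L)$, while $|\nabla\psi(x,\eta(x))|^2=-2g\eta(x)$ together with the monotonicity (iv)--(v) of the profile shows that the surface relative speed $x\mapsto V(x,\eta(x))=\sqrt{-2g\eta(x)/(1+\eta_x^2)}$ is maximal at the trough; combining this with the uniform trough bound and the maximum principle for $\psi_y$ confines the speed of propagation to a bounded range along $\mathcal{C}$, which excludes the alternative $c_j\to\infty$. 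It then remains to promote ``$\min_{S_\eta}V\to 0$'' to ``$V(0,\eta(0))\to 0$'', i.e.\ to show that the surface minimum of $V$ sits at the crest. To this end, parametrize the surface speed by $x\in[0,L]$ and differentiate the Bernoulli relation $|\nabla\psi(x,\eta(x))|^2=-2g\eta(x)$ twice along $S_\eta$, substituting $\psi_x=-\psi_y\eta_x$ on $S_\eta$ (from $\psi\equiv 0$ there), the field-equation value $\Delta\psi=-\gamma(0)$ on $S_\eta$, and property (vii) (so $\eta_x$ and $\psi_y$ have opposite signs on $(0,L)$): at any interior critical point $x_0\in(0,L)$ of $x\mapsto V(x,\eta(x))$ the resulting expression for the second derivative has a sign, forced precisely by the smallness condition \eqref{C:small}, that is incompatible with a minimum. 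Hence $\min_{0\leq x\leq L}V(x,\eta(x))=V(0,\eta(0))$ and (ii$''$) becomes (ii$'''$); this is the infinite-depth counterpart of the finite-depth arguments of \cite{CoSt07, Var08}.

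\emph{Main obstacle.} The crux is the localization step: because $\gamma'\leq 0$ the zeroth-order term in $\Delta w+\gamma'(\psi)w=0$ has the unfavorable sign, so $\sup_{\overline{\Omega}_\eta}\partial_y\psi$ need not be attained on $\partial\Omega_\eta$ and the plain maximum principle is unavailable; the argument genuinely hinges on using the \emph{uniform} decay at the bottom (Lemma~\ref{L:exp-decay}) to descend to a uniformly bounded region --- which in turn forces one to extract a uniform bound on $|\eta_j(0)|$ --- and then on a quantitative Harnack/comparison estimate there. A secondary difficulty is the boundary-curve computation in the last step: the double tangential differentiation of the Bernoulli condition must be carried out carefully, and it is there that the smallness hypothesis \eqref{C:small}, controlling the term that carries the surface vorticity $\gamma(0)$, is used.
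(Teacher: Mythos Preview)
Your localization step for (ii$''$) has a genuine gap. You invoke Lemma~\ref{L:exp-decay} to obtain a depth $Y$, uniform in $j$, below which $V_j=-\partial_y\psi_j\geq c_j-\delta$. But Lemma~\ref{L:exp-decay} controls $w_q$ (hence $\psi_x$), not $\psi_y+c$, and in any case its constants depend on a bound $|\lambda|+\|w\|_X<M$. The sequence handed to you by Theorem~\ref{T:main}(ii) carries no such a priori bound: several of the alternatives in the proof of Theorem~\ref{T:main} (e.g.\ $\|\partial_pw_j\|_{C^0}\to\infty$) produce stagnation with \emph{unbounded} norms. Without a uniform $M$ there is no uniform $Y$, and the subsequent Harnack/comparison estimate on the truncated domain never gets started. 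The same difficulty undermines the claimed uniform control of the domain geometry.

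The paper avoids all of this with a barrier tailored to the bad sign of $\gamma'$. With $(x_j,y_j)$ a near-stagnation point and $s_j=\partial_y\psi_j(x_j,y_j)\to 0^-$, set $W_j=\partial_y\psi_j-s_je^{\beta(y-y_j)}$ with $\beta$ so large that $\beta^2+\gamma'(r)\geq 0$ for all $r\geq 0$. Then
\[
(\Delta+\gamma'(\psi_j))W_j=-s_j\bigl(\beta^2+\gamma'(\psi_j)\bigr)e^{\beta(y-y_j)}\geq 0,
\]
so $W_j$ obeys the maximum principle even though $\partial_y\psi_j$ alone does not. Since $W_j(x_j,y_j)=0$ and $W_j\to -c_j<0$ as $y\to-\infty$, the nonnegative maximum of $W_j$ sits on $S_{\eta_j}$, which forces $0>\partial_y\psi_j(\xi_j,\eta_j(\xi_j))\geq s_j\,e^{\beta(\eta_j(\xi_j)-y_j)}$ at some surface point $\xi_j$. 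Because $\eta_j\leq 0$ (Bernoulli) and $y_j$ is assumed bounded below (the case $y_j\to-\infty$ being attributed to $c_j\to 0$), the exponential is bounded and the right side tends to $0$. No uniform decay lemma, no Harnack, no control of norms are needed.

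For (ii$'''$) you also diverge from the paper. You try to pin the surface minimum of $V$ at the crest by twice differentiating the Bernoulli relation tangentially and arguing that \eqref{C:small} forbids an interior minimum; this is only sketched. The paper instead uses the pressure function $B=\tfrac12|\nabla\psi|^2+gy-\Gamma(-\psi)$ and the maximum principle (Lemma~\ref{L:-pressure}) under \eqref{C:negative} to get $\partial B/\partial n>0$ on $S_\eta$, whence $x\mapsto\psi_y(x,\eta(x))$ is monotone from crest to trough (Lemma~\ref{L:monotone}); the smallness condition \eqref{C:small} is precisely what makes \eqref{C:negative} hold along $\mathcal{C}$. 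Finally, to exclude $c_j\to\infty$ you appeal to a ``maximum principle for $\psi_y$,'' but you have just argued $\psi_y$ lacks one in this sign regime; the paper simply invokes the elementary inequality \eqref{trough}, $\psi_y^2(\pm L,\eta(\pm L))>c^2+2\Gamma_\infty$, which bounds $c$ directly once the trough speed is bounded.
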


Theorem \ref{T:main-} states that if the vorticity is non-positive and monotone with depth then 
stagnation, if occurs, must be at the wave crest. 
If the vorticity is non-negative and monotone with depth, Theorem \ref{T:main+} states that
when the second alternative in (ii) of Theorem \ref{T:main} realizes, 
stagnation occurs either at the infinite bottom or somewhere on the free surface. 
If  the relative flow speed at the wave trough is bounded along the continuum, 
and if, in addition, the vorticity is sufficiently small, then cavitation does not occur
and stagnant occurs either at the infinite bottorm or at the wave crest 
(see (ii$'''$) in Theorem \ref{T:main+}).

The conclusion of Theorem \ref{T:main} or Theorem \ref{T:main-}, in case of zero vorticity, 
partly recovers the well-known result (\cite{Tol96}, for instance) that 
the continuum of irrotational Stokes waves contains a ``limiting" wave with stagnation at the wave crest.
In other words, the second alternative in (ii) of Theorem \ref{T:main} 
or (ii') of Theorem \ref{T:main-} occurs.
Theorem \ref{T:main} or Theorem \ref{T:main-} also explains 
the existence of Gerstner's trochoidal waves \cite{Ger02}. 
Indeed, the vorticity function corresponding to Gerstner's waves \cite{OkSh01}, given by
\[ \gamma(\psi) =-2m^2\frac{e^{2b(\psi)}}{1-m^2e^{2b(\psi)}},\]
where $0\leq m<1$ and $-\infty<b<0$, is non-positive and monotone with depth.

Theorem \ref{T:main+} improves the result in \cite{Hur06}.
Cavitation, if occurs, is shown to be a consequence of that 
the speed of wave propagation becomes unboundedly large. 
Furthermore, \eqref{C:small} gives a more straightforward smallness condition of the vorticity
than that in \cite{Hur06}.

In the finite-depth case \cite{CoSt04}, 
the second alternative in (ii) of Theorem \ref{T:main} (stagnation) realizes. 
Instead of speed of wave propagation, in \cite{CoSt04} 
Bernoulli's constant serves as the bifurcation parameter, 
and the parameter values are shown to be subcritical; 
solitary water waves of small-amplitude bifurcate for supercritical values of parameter \cite{Hur08a}. 
While solitary water waves are not expected to exist in the infinite-depth case \cite{Cra02, Hur09b},
nevertheless, I conjecture that a limiting Stokes wave with vorticity exhibits stagnation.

In the irrotational setting \cite{Tol96, BuTo03} 
speed of wave propagation of Stokes waves on deep water is shown to be a priori bounded
by studying the kernel associated to Nekrasov's integral equation.
It is noteworthy that even when existence theory is based on Babenko's pseudo-differential equation 
\cite{BuTo03}, a bound for traveling speed uses Nekrasov's integral equation.
Such an integral representation of solutions, unfortunately, critically depends on that 
the stream function is harmonic, and thus it is not readily available in the rotational setting.
Section \ref{S:Zei73} presents the reformulation of the problem via a quasi-conformal transform,
which has structural similarity to the formulation of the irrotational problem in \cite{LC25}, 
and thus it is potentially useful to obtaining an integral representation of solutions.

In the finite-depth case \cite{Var08, Var09}, 
if the vorticitiy is non-positive (not necessarily monotone with depth),
stagnation is shown to occur at the wave crest. 
The same result is expected to hold in the infinite-depth case, but no proof is given presently. 
Section \ref{S:properties} collects properties of Stokes waves of infinite depth
which are relevant to study the location of stagnation points.

\subsection{Reformulation: reduction to an operator equation}\label{SS:reformulation}
Under the no-stagnation condition \eqref{E:no-stag}, 
exchanging the roles of the $y$-coordinate and $\psi$ offers a reformulation of 
(\ref{E:stream}b)-(\ref{E:stream}e) in a fixed domain, which serves as the basis of the existence theory. 

Let 
\[
q=x \quad\text{and}\quad p=-\psi(x,y)
\]
be new independent variables. 
They map the fluid region of one period $\{(x,y) \in \Omega_\eta: -L<x<L\}$ 
to the fixed semi-infinite strip $(-L,L)\times (-\infty,0)$ in the $(q,p)$-plane and 
the free surface of one period $\{(x, \eta(x)): -L<x<L\}$ to the top boundary $(-L,L)\times \{0\}$ of the strip.
Let 
\[
R=\{ (q,p) : -L< q<L, \, -\infty<p<0\}, \qquad T=\{ (q,0) : -L<q<L \}.
\]
Accordingly, the depth function \[h(q,p)=y\] replaces the dependent variable. 
It is straightforward to show that 
\begin{equation}\label{E:h-der}
h_q=-\frac{\psi_x}{\psi_y}, \qquad h_p= -\frac{1}{\psi_y}. 
\end{equation}

By the above {\em partial} hodograph transform, 
the semilinear elliptic free boundary problem \eqref{E:stream} is reformulated as 
the following quasilinear elliptic boundary value problem in the fixed domain $R$:
\begin{subequations}\label{h-prob}%
\begin{alignat}{2}
(1+h_q^2)h_{pp}-2h_ph_qh_{pq}&+h^2_ph_{qq}=
-\gamma\Pf h^3_p \qquad& &\text{in}\quad R{,} \tag{\ref{h-prob}a}\\
1+2ghh_p^2&+h^2_q=0 & &\text{on}\quad T{,} \tag{\ref{h-prob}b}\\
\nabla h=(h_q,h_p)&\to (0,1/c)\quad
\text{as }p \to -\infty& &\text{uniformly for $q$}{,}
\tag{\ref{h-prob}c}
\end{alignat}%
\end{subequations}%
where $h_p>0$ in $\overline{R}$ and $h$ is even and $2L$-periodic in the $q$-variable.

It is established in \cite[Lemma 3.1]{Hur06} that the above formulation is equivalent to \eqref{E:stream}.

A preliminary step of obtaining an operator equation for \eqref{h-prob}
is to identify its trivial solutions. 

\begin{lemma}[Trivial flows]\label{L:trivial}
Given $\gamma \in C^{1+\alpha}([0,\infty))$, $\alpha \in (0,1)$, 
for each $\lambda \in (-2\Gamma_{\inf}, \infty)$ the system \eqref{h-prob} has a solution
\begin{equation}\label{E:trivial}
h_{tr}(p)=h_{tr}(p;\lambda)=\int^p_0 (\lambda+2\Gamma(p'))^{-1/2}dp'-\frac{\lambda}{2g},
\end{equation}
which corresponds to the shear flow in the horizontal direction 
$$(\psi_{tr})_y(y)=-(\lambda+2\Gamma(p(y)))^{1/2}$$
under the flat surface $\eta(x)=0$, where $p(y)$ is an inverse of \eqref{E:trivial}.
\end{lemma}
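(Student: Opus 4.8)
The plan is to look for solutions of \eqref{h-prob} that do not depend on $q$; these must, in particular, capture all trivial flows under a flat surface. For $h=h(p)$ the quasilinear interior equation (\ref{h-prob}a) collapses to the ordinary differential equation $h_{pp}=-\gamma(-p)\,h_p^{\,3}$ on $(-\infty,0)$, the surface relation (\ref{h-prob}b) becomes the single algebraic identity $1+2g\,h(0)\,h_p(0)^2=0$, and the bottom condition (\ref{h-prob}c) becomes $h_p(p)\to 1/c$ as $p\to-\infty$; evenness and $2L$-periodicity in $q$ are automatic, and the positivity requirement reduces to $h_p>0$ on $\overline{R}$, i.e. for all $p\leq 0$.

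First I would integrate the ODE by quadrature. Writing $w=h_p>0$, the equation says $(w^{-2})'=2\gamma(-p)$, so $w(p)^{-2}=w(0)^{-2}+2\Gamma(p)$. Setting $\lambda:=h_p(0)^{-2}$ (recall $\Gamma(0)=0$), this yields $h_p(p)=(\lambda+2\Gamma(p))^{-1/2}$. For the right-hand side to define a strictly positive, real-valued function on the whole semi-infinite interval one needs $\lambda+2\Gamma(p)>0$ for every $p\leq 0$, that is $\lambda>-2\inf_{p\leq0}\Gamma(p)=-2\Gamma_{\inf}$; conversely any such $\lambda$ works, since then $\lambda+2\Gamma(p)\geq\lambda+2\Gamma_{\inf}>0$ uniformly. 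Integrating once more, $h(p)=h(0)+\int_0^p(\lambda+2\Gamma(p'))^{-1/2}\,dp'$, and the identity $1+2g\,h(0)\,h_p(0)^2=0$ together with $h_p(0)^2=1/\lambda$ forces $h(0)=-\lambda/(2g)$. This is exactly the formula \eqref{E:trivial}, which is smooth on $(-\infty,0]$ and satisfies $h_{tr}(p)\to-\infty$ as $p\to-\infty$, consistent with the fluid being infinitely deep.

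Next I would verify the remaining demands. Positivity $h_p=(\lambda+2\Gamma)^{-1/2}>0$ is immediate on $\overline{R}$. For (\ref{h-prob}c), $h_q\equiv 0$, and since $\Gamma(p)\to\Gamma_\infty$ one gets $h_p(p)\to(\lambda+2\Gamma_\infty)^{-1/2}$, which identifies the speed as $c=(\lambda+2\Gamma_\infty)^{1/2}>0$ (positive because $\Gamma_\infty\geq\Gamma_{\inf}$). For regularity, $\gamma\in C^{1+\alpha}$ gives $\Gamma\in C^{2+\alpha}$, and since $\lambda+2\Gamma$ is bounded and bounded away from $0$, the function $h_p=(\lambda+2\Gamma)^{-1/2}$ lies in $C^{2+\alpha}$, so $h_{tr}\in C^{3+\alpha}$ on the closed strip (even and $2L$-periodic in $q$), as the existence theory requires. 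Finally, reading \eqref{E:h-der} backwards with $p=-\psi$: $h_q=-\psi_x/\psi_y=0$ gives $\psi_x=0$, and $h_p=-1/\psi_y$ gives $(\psi_{tr})_y=-1/h_p=-(\lambda+2\Gamma(-\psi))^{1/2}$, i.e. a horizontal shear flow, while the free surface $\{p=0\}$ is the flat level $\{y=h_{tr}(0)=-\lambda/(2g)\}$.

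I do not expect a genuine difficulty here; the content is a first-order ODE solved by separation of variables. The two points that deserve care are (i) matching the a priori range $\lambda\in(-2\Gamma_{\inf},\infty)$ against the requirement that $h_p$ be real and strictly positive throughout the semi-infinite strip, which is precisely the inequality $\lambda+2\Gamma(p)>0$ for all $p\leq0$ and explains the appearance of $\Gamma_{\inf}$; and (ii) the passage to the limit as $p\to-\infty$ in the bottom condition, where one uses the standing boundedness of $\Gamma$ and, for the existence of $\Gamma_\infty=\lim_{p\to-\infty}\Gamma(p)$, the decay of $\gamma$ available under the hypotheses of Theorem~\ref{T:main}.
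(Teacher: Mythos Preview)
Your argument is correct and complete: reducing to the $q$-independent case, integrating $h_{pp}=-\gamma(-p)h_p^3$ via $(h_p^{-2})'=2\gamma(-p)$, and then fixing the two constants of integration from the surface relation and the bottom limit is exactly the computation that produces \eqref{E:trivial}. Your identification of the admissible range $\lambda>-2\Gamma_{\inf}$ from the positivity of $\lambda+2\Gamma(p)$ and of the wave speed $c=(\lambda+2\Gamma_\infty)^{1/2}$ from the limit $p\to-\infty$ are both right, as is the back-substitution through \eqref{E:h-der} to recover the shear profile $(\psi_{tr})_y=-(\lambda+2\Gamma)^{1/2}$.

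There is nothing to compare against here: the paper does not give its own proof of this lemma but defers to \cite[Lemma~3.2]{Hur06}. What you have written is precisely the argument one finds there (and the natural one). One cosmetic remark: you observe that the flat surface sits at $y=h_{tr}(0)=-\lambda/(2g)$, which is indeed what the formulas give once the Bernoulli constant is normalized to zero; the phrase ``$\eta(x)=0$'' in the lemma should be read as ``the surface is flat'' rather than as a literal coordinate statement, consistent with \eqref{E:c-eta}.
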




The proof is in \cite[Lemma 3.2]{Hur06}, and it is omitted. 

In the bifurcation analysis below, instead of $c$ the square of the (relative) upstream speed 
$\lambda=(h_{tr}')^{-2}(0)=(\psi_{tr})^2(0)$ of the trivial flow \eqref{E:trivial} 
serves as the bifurcation parameter. 
For each $\lambda \in (-2\Gamma_{\inf}, \infty)$, the speed of wave propagation 
is determined by $\lambda$ by $c^2=\lambda+2\Gamma_\infty$. 

It is convenient to make use of the shorthand
\begin{equation}\label{D:a}
a(\lambda)=a(p;\lambda)=(\lambda+2\Gamma(p))^{1/2}.
\end{equation}
The derivatives of $h_{tr}$ can be expressed in terms of~$a$ as 
\[
h_{tr}'(p)=a^{-1}(p;\lambda), \qquad h_{tr}''(p)=-\gamma(-p)a^{-3}(p;\lambda).
\]
Note that $a(\lambda)$ is bounded for each $\lambda \in (-2\Gamma_{\inf}, \infty)$.

\

In order to tackle the existence question for solutions of \eqref{h-prob} via bifurcation theory,
we need to further reformulate the problem as 
an abstract operator equation in the form $F(\lambda,w)=0$,
where $w$ belongs to a Banach space. To this end, let 
\begin{equation}\label{D:w}
h(q,p)=h_{tr}(p)+w(q,p).
\end{equation}
Then, $w(q,p) \to 0$ as $p \to -\infty$ uniformly for $q$.

We introduce the function spaces in use. Let 
\begin{align*}
X=\{\,&f \in C^{3+\alpha}_{per}(\overline{R}):
\partial^{k_2}_p\partial^{k_1}_q w \in o(1) \text{ as } p \to -\infty,\; 
k_1+k_2 \leq 3 \text{ uniformly for $q$}\,\}{,} \\
Y_1=\{\,&f \in C^{1+\alpha}_{per}(\overline{R}):
\partial^{k_2}_p\partial^{k_1}_q  w \in o(1) \text{ as } p \to -\infty, \; 
k_1+k_2\leq 1\text{ uniformly for $q$}\,\}{,}
\end{align*}
and $Y_2=C^{2+\alpha}_{per}(T)$. Recall that the subscript $per$ means 
evenness and $2L$-periodicity in the $q$-variable. Let $Y=Y_1 \times Y_2$ with the product topology.
We equip $X$ and $Y$ with the H\"{o}lder norms (thus rendering them Banach spaces):
\[
\|\cdot\|_X:= \|\cdot\|_{C^{3+\alpha}(\overline{R})}\,{,} \qquad\|\cdot\|_Y:= \|\cdot\|_{Y_1}+\|\cdot\|_{Y_2}\,{,}
\]
where $\|\cdot\|_{Y_1}=\|\cdot\|_{C^{1+\alpha}(\overline{R})}$
and $\|\cdot\|_{Y_2}=\|\cdot\|_{C^{2+\alpha}(T)}$.
Let $Z=C^0_{per}(\overline{R})$ have 
the usual maximum norm $\|\cdot \|_Z=\|\cdot\|_{C^0(\overline{R})}$.

The operator form of the Stokes wave problem is then given $\gamma(r)$ defined for $r \in [0,\infty)$ 
to find a nontrivial solution $(\lambda,w) \in \mathbb{R} \times X$ of
\begin{equation}\label{E:main}
F(\lambda,w)=0, 
\end{equation}
where 
\begin{equation}\label{D:F}
F(\lambda, w)=(F_1(\lambda,w),F_2(\lambda,w)):\mathbb{R} \times X \to Y,
\end{equation}
\begin{align}
\begin{split}
F_1(\lambda,w)=&(1+w^2_q)w_{pp} -2(a^{-1}\A+w_p)w_q w_{pq}+(a^{-1}\A+w_p)^2w_{qq}\\
&+\gamma\Pf(a^{-1}\A+w_p)^3-\gamma\Pf a^{-3}\A(1+w^2_q){,} \label{D:F_1}
\end{split}\\
F_2(\lambda,w)=&\left[1+(2gw-\lambda)(\lambda^{-1/2}+w_p)^2+w^2_q\right]_T.\label{D:F_2}
\end{align}  

\subsection{Approximate problems}\label{SS:methods}
In the finite-depth case with vorticity \cite{CoSt04}
as well as in the irrotational setting (of infinite depth) \cite{Tol96, BuTo03},
the key to a successful existence theory ``in the large'' for Stokes waves lies in 
a generalized degree theory and global bifurcation theory. 

The rotational Stokes-wave problem in the finite-depth case in \cite{CoSt04} 
takes the same operator equation $F(\lambda, w)=0$ as in the infinite-depth case
(where $F$ is in \eqref{D:F_1} and \eqref{D:F_2})
but with the important difference that in the finite-depth case $w$ is considered in a finite rectangle,
whereas in the infinite-depth case it is considered in the semi-infinite strip $R$. 

In the finite-depth case, the equation $F(\lambda,w)=0$ 
gives an elliptic boundary value problem in the bounded domain, 
and the Fredholm property of $F$ follows \cite{CoSt04} 
from the Schauder theory for elliptic problems and the compact embeddings 
of H\"older spaces of functions in the bounded domain.
The existence of Stokes wave solutions in the finite-depth case \cite{CoSt04} then
uses the generalized degree theory, adapted by Healey and Simpson \cite{HeSi98}
for a general class of nonlinear Fredholm operators, and global bifurcation theory \cite{Rab71}.

In the infinite-depth case, unfortunately, a similar approach fails. 
Denoted by $F_w(\lambda,w)$ is the Fr\'{e}chet derivative of $F$ in the second argument 
at $(\lambda, w) \in \mathbb{R} \times X$. A straightforward calculation yields that 
\[
F_w(\lambda,w)=(A(\lambda,w), B(\lambda,w)),
\]
where
\begin{align}
\begin{split}\label{D:A}
A(\lambda,w)[\varphi]=&(1+w^2_q)\varphi_{pp}-
2(a^{-1}\A+w_p)w_q\varphi_{pq}+(a^{-1}\A+w_p)^2\varphi_{qq}  \\ 
&+\left(-2w_qw_{pq}+2(a^{-1}\A+w_p)w_{qq}+3\gamma\Pf(a^{-1}\A+w_p)^2\right)\varphi_p  \\
&+\left(2w_qw_{pp}-2(a^{-1}\A+w_p)w_{pq}-2\gamma\Pf a^{-3}\A w_q\right)\varphi_q{,}  
\end{split}\\
\begin{split}\label{D:B}
B(\lambda,w)[\varphi]=&\left[2(2gw-\lambda)(\lambda^{-1/2}+w_p)\varphi_p 
+ 2w_q\varphi_q+2g(\lambda^{-1/2}+w_p)^2\varphi \right]_{T}.
\end{split}
\end{align}
We shall show in Lemma \ref{L:fred} that 
the closed-ness of the range of $F_w(\lambda,w):X\to Y$ is equivalent to
the unique solvability of its ``limiting'' problem
\[ \varphi_{pp}+(\lambda+2\Gamma_\infty)^{-1}\varphi_{qq}=0\]
in the infinite strip $\{ (q,p): -L<q<L,\, -\infty<p<\infty \}$ in the $C^0_{per}$ class; see also \cite{VoVo03}. 
However, the spectrum of the operator $\partial_p^2+(\lambda+2\Gamma_\infty)^{-1}\partial_q^2$
defined in the infinite strip is $(-\infty,0]$, and the limiting problem has infinitely many solutions. 
In the infinite-depth case the operator $F$ defining the Stokes-wave problem is {\em not} Fredholm, 
and (generalized) degree theory may not be directly applicable.

This difficulty can be overcome by studying a sequence of ``approximate" problems 
\begin{equation}\label{E:approx}
F^\epsilon(\lambda,w)=0,
\end{equation} where $\epsilon>0$ and 
\begin{equation}\label{D:F^epsilon}
F^\epsilon(\lambda, w):=
\left(F_1(\lambda,w)-\epsilon w,F_2(\lambda,w)\right).
\end{equation}
Then, the Fredholm property of $F^\epsilon$ for each $\epsilon>0$ follows from that its limiting problem 
\[ \varphi_{pp}+(\lambda - 2\Gamma_\infty)^{-1} \varphi_{qq}-\epsilon \varphi=0\]
in the finite strip $\{(q,p): -L<q<L,\, -\infty<p<\infty\}$ admits only the trivial solution;
see Lemma \ref{L:fred} and Lemma \ref{L:proper}.

\section{Generalized degree for the approximating operators}\label{S:degree} 

For $\delta>0$ let us define the set
\begin{equation}\label{D:O_delta}
\OO= \left\{(\lambda,w)\in \mathbb{R} \times X : \lambda> -2\Gamma_{\inf} + \delta,\,
a^{-1}\A +w_p>\delta \text{ in } R,\, w<\frac{2\lambda-\delta}{4g} \text{ on $T$}\right\}.
\end{equation}
The purpose of this section is for each $\delta>0$ and for each $\epsilon>0$ to establish 
several properties of the operator $F^\epsilon$ on the set $\OO$ needed to define a topological degree.


First, for each $\delta>0$ and $\epsilon>0$ by continuity of $F^\epsilon$ it follows that 
$\OO$ is open in $\mathbb{R} \times X$. It is straightforward that
\[
F^\epsilon_w(\lambda,w):=(A^\epsilon(\lambda,w), B(\lambda, w)): X \to Y ,
\]
where $A^\epsilon(\lambda, w)=A(\lambda, w)-\epsilon w$
and $A(\lambda, w)$ and $B(\lambda, w)$ are given in \eqref{D:A} and \eqref{D:B},
is continuous. 
Furthermore, $F^\epsilon:\mathbb{R} \times X \to Y$ is 
at least twice continuously Fr\'{e}chet differentiable.

Next, the principal parts of operators $A^\epsilon(\lambda,w)$ and $B(\lambda,w)$ are denoted by 
\begin{align}
A^P(\lambda,w)[\varphi]&
=(1+w_q^2)\varphi_{pp} -2(a^{-1}\A+w_p)w_q\varphi_{pq}
+(a^{-1}\A+w_p)^2\varphi_{qq}{,} \label{E:principal}\\
B^P(\lambda,w)[\varphi]&
=\left[2(2gw-\lambda)(\lambda^{-1/2}+w_p)\varphi_p+2w_q\varphi_q\right]_T{,} \label{E:principalb}
\end{align}
respectively. For each $\delta>0$ and for each $(\lambda,w) \in \OO$ 
note that the differential operator $A^\epsilon(\lambda, w)$ is uniformly elliptic 
with coefficient functions  bounded in $C^{2+\alpha}(\overline{R})$; 
the coefficients of the principal part satisfy
\[
4(1+w_q^2)(a^{-1}\A+w_p)^2 -4(a^{-1}\A+w_p)^2w_q^2 \geq 4\delta^2.
\]
Also, note that the boundary operator $B(\lambda,w)$ is uniformly oblique 
in the sense that it is bounded away from being tangential; 
the coefficient of $\varphi_p$ in $B(\lambda,w)$ satisfies
\[
|2(2gw-\lambda)(\lambda^{-1/2}+w_p)|>\delta^2 \qquad \text{on\quad$T$}.
\]
For each $\delta>0$ and for each $(\lambda,w) \in \OO$ thus 
$F^\epsilon_w(\lambda, w)=(A^\epsilon(\lambda,w),B(\lambda,w))$ 
satisfies the following Schauder estimate~\cite{ADN59}
\begin{equation}\label{E:Schauder2}
\|\varphi\|_X \leq C(\|A^\epsilon(\lambda,w)[\varphi]\|_{Y_1} 
+\|B(\lambda,w)[\varphi]\|_{Y_2} + \|\varphi\|_Z)
\end{equation}
for all $\varphi \in X$, where $C>0$ is independent of~$\varphi$.

Recorded in the next lemma is the Fredholm property of $F^\epsilon_w(\lambda,w)$.

\begin{lemma}[Fredholm property]\label{L:fred}
For each $\delta>0$ and for each $\epsilon>0$, for each $(\lambda, w) \in \mathcal{O}_\delta$  
the linear operator $F^\epsilon_w(\lambda,w)= (A^\epsilon(\lambda,w), B(\lambda,w)):X \to Y$
is a Fredholm operator of index zero.
\end{lemma}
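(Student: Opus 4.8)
The plan is to establish the Fredholm property by combining the Schauder estimate \eqref{E:Schauder2} with a careful analysis of the ``limiting'' constant-coefficient problem at $p\to-\infty$, following the scheme of Volpert--Volpert for elliptic operators in unbounded cylinders (cf. \cite{VoVo03, VoVo03}). First I would record the two structural facts already assembled in the excerpt: on $\OO$ the operator $A^\epsilon(\lambda,w)$ is uniformly elliptic with coefficients bounded in $C^{2+\alpha}(\overline{R})$, and $B(\lambda,w)$ is uniformly oblique; hence \eqref{E:Schauder2} holds. The estimate \eqref{E:Schauder2} immediately gives that $F^\epsilon_w(\lambda,w)$ has finite-dimensional kernel and closed range \emph{provided} the embedding used to absorb the $\|\varphi\|_Z$ term is compact — which it is not, because $R$ is unbounded. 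The remedy is to localize: split $\overline R$ into the bounded piece $\{p\ge -N\}$, where compactness of the Hölder embedding is available, and the tail $\{p\le -N\}$, where the coefficients of $A^\epsilon$ and $B$ are uniformly close (by the $o(1)$ decay of $w$ and its derivatives built into $X$, and by $a^{-1}(\lambda)\to(\lambda+2\Gamma_\infty)^{-1/2}$) to those of the constant-coefficient limiting operator.

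The key step is therefore to show that the limiting operator controls the tail. The limiting problem associated to $A^\epsilon(\lambda,w)$ at $p=-\infty$ is
\[
\varphi_{pp}+(\lambda+2\Gamma_\infty)^{-1}\varphi_{qq}-\epsilon\varphi=0
\]
in the full infinite strip $\{-L<q<L,\ -\infty<p<\infty\}$ with evenness and $2L$-periodicity in $q$ — and, crucially, the boundary condition \eqref{E:stream} degenerates: as $p\to-\infty$ the coefficient $(2gw-\lambda)$ in $B$ stays bounded away from zero but there is no boundary at $p=-\infty$, so one seeks solutions in the $C^0_{per}$ class on the whole strip. Expanding in the Fourier basis $\cos(k\pi q/L)$, $k\ge 0$, each mode satisfies $\varphi_k''=\big((k\pi/L)^2(\lambda+2\Gamma_\infty)^{-1}+\epsilon\big)\varphi_k$, whose only bounded solution on $\mathbb R$ is $\varphi_k\equiv 0$ because the coefficient is strictly positive (here $\epsilon>0$ is essential — this is exactly where the approximation is used, in contrast to $\epsilon=0$ where the $k=0$ mode gives bounded nonzero solutions). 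Hence the limiting problem has only the trivial solution, and by the standard Volpert argument (a partition-of-unity estimate plus the triviality of the limiting kernel, detailed in Lemma~\ref{L:proper}) one upgrades \eqref{E:Schauder2} to an estimate with a \emph{compact} remainder, yielding that $F^\epsilon_w(\lambda,w):X\to Y$ is semi-Fredholm with finite-dimensional kernel and closed range.

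To get Fredholm \emph{of index zero}, I would run the same analysis on the formal adjoint problem — equivalently, observe that the same triviality of the limiting operator shows the cokernel is finite-dimensional — so $F^\epsilon_w(\lambda,w)$ is Fredholm; then compute the index by a homotopy. Connect $(\lambda,w)$ to the trivial flow $(\lambda,0)$ inside $\OO$ (the set $\OO$ is convex in $w$ for fixed $\lambda$, or at least path-connected, so the straight-line homotopy $w_t=tw$ stays in $\OO$ after shrinking $\delta$ if necessary), and further homotope the coefficients to the constant-coefficient operator $\partial_p^2+(\lambda+2\Gamma_\infty)^{-1}\partial_q^2-\epsilon$ with the model oblique boundary condition; the index is invariant along this norm-continuous family of Fredholm operators, and for the model operator one checks index zero directly (again mode-by-mode: each Fourier mode is an invertible two-point-type problem on the half-line $p<0$ with the decay condition at $-\infty$). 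The main obstacle I anticipate is the bookkeeping in the localization/partition-of-unity step — making precise that the tail contribution is genuinely small in the $Y_1\times Y_2$ norm, uniformly, so that it can be absorbed, and handling the oblique boundary term $B$ simultaneously with the interior term $A^\epsilon$; this is routine in spirit but delicate, and is presumably why the paper defers part of it to Lemma~\ref{L:proper}.
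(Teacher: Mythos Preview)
Your proposal is correct and lands on the same two structural ingredients as the paper --- triviality of the limiting equation $\varphi_{pp}+(\lambda+2\Gamma_\infty)^{-1}\varphi_{qq}-\epsilon\varphi=0$ on the full strip (which fails at $\epsilon=0$), and a homotopy to a constant-coefficient model to compute the index --- but the mechanism you use to extract semi-Fredholmness is genuinely different. You propose a Volpert-style localization: split $\overline R$ into a bounded piece (compact H\"older embedding) and a tail (coefficients close to constant), and use a partition-of-unity estimate to upgrade \eqref{E:Schauder2} to one with compact remainder. The paper instead runs a direct sequential argument: given a bounded sequence $\{\varphi_j\}$ with $F^\epsilon_w[\varphi_j]\to(y_1,y_2)$, it shows $\varphi_j\to\varphi$ in $C^0(\overline R)$ by contradiction --- if not, some $(q_j,p_j)$ with $p_j\to-\infty$ escapes, and the \emph{shifted differences} $\vartheta_j(q,p)=\varphi_j(q,p+p_j)-\varphi(q,p+p_j)$ converge to a nontrivial bounded solution of the limiting equation on the doubly-infinite strip, which is impossible (the paper uses an energy identity rather than your Fourier-mode computation, but these are equivalent). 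Schauder then finishes. Your approach has the advantage of being the systematic framework and making the role of the tail explicit; the paper's translation-compactness argument is shorter and avoids the partition-of-unity bookkeeping you flagged as delicate. One small cleanup: you do not need the adjoint to get finite cokernel --- once you have semi-Fredholm (closed range, finite kernel), the index is well-defined in $\mathbb Z\cup\{-\infty\}$ and homotopy invariance through the constant-coefficient operator (which is bijective, hence index zero) forces the index to be zero, exactly as the paper does; the paper then transports this from $(\lambda,0)$ to general $(\lambda,w)$ by connectedness of $\mathcal O_\delta$ rather than by your straight-line homotopy in $w$.
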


\begin{proof}
The first step is to show that $F^\epsilon_w(\lambda,w):X \to Y$ is semi-Fredholm. 
That is, its range is closed in $Y$ and its kernel is finite-dimensional.

Let $\{\varphi_j\}$ be a bounded sequence in $X$. 
Let a sequence $\{ (y_{1j}, y_{2j})\}$ converge to $(y_1,y_2)$ in $Y$ as $j \to \infty$, and let 
\[ 
F^\epsilon_w(\lambda, w)[\varphi_j]= (A^\epsilon(\lambda,w), B(\lambda,w))[\varphi_j]=(y_{1j},y_{2j}),
\qquad j=1,2, \dots.\]
It is immediately that $\varphi_j \to \varphi$ in $C^3_{per}(\overline{R'})$ 
for some $\varphi$ for any bounded subset $R'$ of $R$. 
By continuity, then \[F^\epsilon_w(\lambda,w)[\varphi]=(y_1,y_2).\] 
Our goal is to show that $\varphi_j \to \varphi$ in $X$.

We claim that $\varphi_j \to \varphi$ in $C^0_{per}(\overline{R})$. 
Suppose, on the contrary, that there exists a sequence $\{(q_j,p_j)\}$ in $\overline{R}$ 
such that $p_j \to -\infty$ as $j \to \infty$, yet 
\begin{equation}\label{E:C0limit} 
|\varphi_j(q_j, p_j)-\varphi(q_j,p_j)| \geq \kappa >0\qquad \text{for all $j$}
\end{equation} for some $\kappa$. 
For each $j$, let us form the ``shifted difference'' 
\[ \vartheta_j(q,p)=\varphi_j(q,p+p_j)-\varphi(q,p+p_j)\] 
defined in the shifted domain $R_j:=\{(q,p) \in R: -\infty<p+p_j<0\}$. 
By construction, $\vartheta_j$ satisfies
\[A^\epsilon_j(\lambda, w(\cdot, \cdot +p_j))[\vartheta_j]=y_{1j}(\cdot, \cdot+p_j)-y_1(\cdot, \cdot+p_j)
\qquad \text{in $R_j$},\]
where the operator $A^\epsilon_j(\lambda,w)$ is obtained by shifting the coefficient functions 
of $A^\epsilon(\lambda,w)$ by $-p_j$ in the $p$-direction, i.e., 
by replacing $a(p;\lambda)$ by $a(p+p_j;\lambda)$ and $\gamma(-p)$ by $\gamma(-p-p_j)$; 
the value at $(q,p)$ of the function $w(\cdot, \cdot+p_j)$ is given by $w(q,p+p_j)$;
$y_{1j}(\cdot, \cdot+p_j)$, $y_1(\cdot, \cdot+p_j)$ are defined in the same manner.

Passing to the limit as $j \to \infty$ of the above, we obtain that
the (pointwise) ``limiting" function $\vartheta_0$ of $\vartheta_j$ is in the $C^0$ class
is defined in the ``limiting" domain \[R_0=\{ (q,p): -L<q<L,\, -\infty<p<\infty\}\] of $R_j$
and that it satisfies the ``limiting" equation
\begin{equation}\label{E:limiting}
(\vartheta_0)_{pp} +(\lambda+2\Gamma_\infty)^{-1}(\vartheta_0)_{qq}-\epsilon \vartheta_0=0
\qquad \text{in } R_0.
\end{equation}
The limiting equation is obtained by taking the (pointwise) limit of the coefficient functions of 
$A_j(\lambda, w(\cdot, \cdot +p_j))$ and $y_{1j}(\cdot, \cdot+p_j)$, $y_1(\cdot, \cdot+p_j)$ as $j \to \infty$
and it uses that $\nabla w(q,p+p_j), \nabla^2 w(q,p+p_j) \to 0$ as $j \to \infty$ 
for all $(q,p) \in \overline{R}$ and that $a(\lambda, p+p_j)=(\lambda+2\Gamma_{\infty})^{1/2}$ 
and $\gamma(-p-p_j) \to 0$ as $j \to \infty$ for all $-\infty<p\leq 0$.
Moreover, since $\varphi$ is even and $2L$-periodic in the $q$-variable, so is $\vartheta_0$.

It is standard that \eqref{E:limiting} admits only the trivial solution $\vartheta_0=0$.
Indeed, multiplying the equation by $\vartheta_0$ and integrating over $R_0$ yields that
\begin{equation}\label{E:energy} 
\iint_{R_0} \left((\vartheta_0)_{ p}^2+ (\lambda+2\Gamma_\infty)^{-1}(\vartheta_0)_q^2 
+\epsilon (\vartheta_0)^2\right)\,dqdp=0.
\end{equation}
This, however, contradicts \eqref{E:C0limit}, and thus proves the claim.

Since $A^\epsilon(\lambda,w)$ is uniformly elliptic with coefficient functions 
bounded in $C^{2+\alpha}(\overline{R})$ and since $B(\lambda,w)$ is uniformly oblique,
an application of the Schauder estimate \cite{ADN59} yields that 
\[ \|\varphi_j-\varphi\|_X \leq C(\|y_{1j}-y_1\|_{Y_1}+\|y_{2j}-y_2\|_{Y_2}+\|\varphi_j-\varphi\|_Z)\]
for all $j$, where $C>0$ is independent of $\varphi_j$ and $\varphi$. 
By the above claim, the last term of the right side vanishes as $j \to \infty$. 
Since the first two terms of the right side decreases to zero as $j \to \infty$ by hypothesis, 
it follows that $\varphi_j \to \varphi$ in $C^{3+\alpha}(\overline{R})$ as $j \to \infty$. 
That means, the range of $F^\epsilon_w(\lambda,w)$ is closed in $Y$. 

Repeating the above argument for $(y_{1j}, y_{2j})=(0,0)$ then yields that
the kernel of $F^\epsilon(\lambda,w)$ is a finite-dimensional subspace in $X$. 
Therefore, $F^\epsilon_w (\lambda,w)$ is semi-Fredholm.

The next step is to show that 
\begin{multline*}
F^\epsilon_w(\lambda,0)=\Big(\partial_p^2 +a^{-2}(\lambda)\partial_q^2
+3\gamma(-p)a^{-2}(\lambda)\partial_p-\epsilon I{,} 
\left[-2\lambda^{1/2}\partial_p +2g\lambda^{-1}I \right]_T\Big)
\end{multline*}
is Fredholm of index zero. Let 
\begin{align*}
L^\epsilon=\left(\partial_p^2 +(\lambda+2\Gamma_{\infty})^{-1}\partial_q^2-\epsilon I{,} 
\left[-2\lambda^{1/2}\partial_p +2g\lambda^{-1}I \right]_T\right)
\end{align*}
and let us consider the one-parameter family of operators
\[
(1-t)L^\epsilon+tF^\epsilon_w(\lambda,0):X \to Y\qquad \text{for}\quad t \in [0,1].
\] 
Note that $L^\epsilon$ is obtained by replacing the variable coefficients of $F^\epsilon(\lambda,0)$ 
by their pointwise limit as $p \to -\infty$. 
It is standard from the elliptic theory (see \cite[Chapter~3]{Kry96}, for instance) that 
$L^\epsilon:X \to Y$ is bijective. In particular, it is a Fredholm operator of index zero.
Since $F^\epsilon_w(\lambda,0)$ is semi-Fredholm from the previous step, 
it follows by the homotopy invariance of Fredholm index \cite[Chapter~4]{Kat67}
that $F^\epsilon_w(\lambda, 0)$ is also a Fredholm operator of index zero.

Finally, since $\mathcal{O}_\delta$ is connected, the assertion follows 
by the continuity of Fredholm index \cite[Chapter~4]{Kat67}.
\end{proof}

For our next preliminary result, we need several notations to describe. 
The domain of the operator $A^\epsilon(\lambda,w)$ is defined by
\begin{equation}\label{E:domain}
D(A^\epsilon(\lambda,w))=\{\,\phi \in X: B(\lambda,w)[\varphi]=0 \,\}.
\end{equation}
Note that $A^\epsilon(\lambda,w)$ restricted to $D(A^\epsilon(\lambda,w))$ is closed in $Y_1$.
The spectrum of $A^\epsilon(\lambda,w)$ is defined by
\begin{equation}\label{E:spectrum}
\sigma(\lambda,w) =\{ \mu \in \mathbb{C}:
A^\epsilon(\lambda,w)-\mu I: D(A^\epsilon(\lambda,w)) \to Y_1\text{ is not bijective}\},
\end{equation}
where $A^\epsilon(\lambda,w)$, $D(A^\epsilon(\lambda,w))$, and $Y_1$ 
are complexified in the natural way.
If $\mu \in \sigma(\lambda,w)$ and $\ker(A^\epsilon(\lambda,w)-\mu I)$ is nontrivial
then $\mu$ is called an {\em eigenvalue}. 
An eigenvalue $\mu$ is said to have finite algebraic multiplicity if 
\[\dim \ker(A^\epsilon(\lambda,w)-\mu I)^m=\dim \ker(A^\epsilon(\lambda,w)-\mu I)^{m+1}<\infty\]
for some positive integer $m$. In this case, 
$\dim \ker(A^\epsilon(\lambda,w)-\mu I)^m$ is called the algebraic multiplicity of $\mu$.

\begin{lemma}[Spectral properties]\label{L:spectral}
For each $\delta>0$ and $\epsilon>0$ and 
for each $(\lambda,w)\in\OO$ with\/ $|\lambda|+\|w\|_X \leq M$, where $M>0$, 
there exists a small constant $s>0$ and positive constants $C_1, C_2$ such that
\begin{equation}\label{E:spectral}
C_1\|\varphi\|_X \leq |\mu|^{\alpha/2}\|(A^\epsilon(\lambda,w)-\mu I)[\varphi]\|_{Y_1}
+|\mu|^{\alpha/2}\|B(\lambda,w)[\varphi]\|_{Y_2}+|\mu|^{(\alpha+1)/2} \|\varphi\|_{Z}
\end{equation}
for all $\varphi\in X$ and for all $\mu \in \mathbb{C}$ satisfying\/ $|{\rm arg}(\mu)| \leq \pi/2+s$
and $|\mu| \geq C_2\geq 1$ sufficiently large,
where $\alpha \in (0,1)$ is the H\"older exponent inherent from $X$ and $Y$.

Moreover, $\sigma(\lambda,w)$ possesses only finitely many eigenvalues 
in the sector $|arg(\mu)| \leq \pi/2+s$, each of which has a finite algebraic multiplicity.
The boundary operator $B(\lambda,w):X \to Y_2$ is surjective.
\end{lemma}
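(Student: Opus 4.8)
The plan is to establish \eqref{E:spectral} as a resolvent-type Schauder estimate with explicit dependence on the spectral parameter $\mu$, and then to deduce the spectral consequences from it by a standard compactness and analytic-Fredholm argument. The starting point is the Agmon--Douglis--Nirenberg theory for elliptic systems depending on a parameter, exactly as in the plain Schauder bound \eqref{E:Schauder2}, but now applied to the shifted operator $A^\epsilon(\lambda,w)-\mu I$ together with the oblique boundary operator $B(\lambda,w)$. The key point is that for $\mu$ in a sector $|\arg(\mu)|\le\pi/2+s$ with $s>0$ small, the symbol of $A^\epsilon(\lambda,w)-\mu I$ stays uniformly elliptic in the parameter-dependent sense: writing the principal symbol from \eqref{E:principal}, one checks that $(1+w_q^2)\xi_p^2+2(a^{-1}\A+w_p)w_q\xi_p\xi_q+(a^{-1}\A+w_p)^2\xi_q^2+\mu$ does not vanish for real $(\xi_p,\xi_q)\ne 0$ and such $\mu$, using the uniform ellipticity constant $4\delta^2$ from $\OO$ and choosing $s$ small enough (depending only on $\delta$ and $M$) that the sector does not meet the negative real axis where the symbol's real part could cancel. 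The uniform obliqueness of $B(\lambda,w)$ — the bound $|2(2gw-\lambda)(\lambda^{-1/2}+w_p)|>\delta^2$ on $T$ — guarantees the complementing (Lopatinski--Shapiro) condition for the parameter-dependent problem. With these two facts, the ADN a priori estimate in its parameter-dependent form gives, for $|\mu|$ large,
\[
\|\varphi\|_X + |\mu|^{1/2}\|\varphi\|_{C^{2+\alpha}} + \dots + |\mu|^{(2+\alpha)/2}\|\varphi\|_Z
\le C\big(\|(A^\epsilon(\lambda,w)-\mu I)[\varphi]\|_{Y_1}+\|B(\lambda,w)[\varphi]\|_{Y_2}\big),
\]
from which \eqref{E:spectral} follows by keeping only the terms displayed there and redistributing powers of $|\mu|$. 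One subtlety is the unbounded domain $R$: the estimate must be made uniform in the $p$-direction. Here I would argue exactly as in the proof of Lemma \ref{L:fred}, via the shifted-domain/limiting-operator device — if uniformity failed there would be a sequence $p_j\to-\infty$ along which the estimate degenerates, but the shifted coefficients converge to the constant-coefficient limiting operator $\partial_p^2+(\lambda+2\Gamma_\infty)^{-1}\partial_q^2-\epsilon I$ with boundary operator $[-2\lambda^{1/2}\partial_p+2g\lambda^{-1}I]_T$, for which the parameter-dependent estimate holds by explicit computation (Fourier series in $q$, and the ODE resolvent in $p$); the $o(1)$ decay built into $X$ and $Y$ prevents mass from escaping to $p=-\infty$.

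Granting \eqref{E:spectral}, the spectral assertions are routine. First, \eqref{E:spectral} shows that for $|\mu|\ge C_2$ in the sector the operator $(A^\epsilon(\lambda,w)-\mu I, B(\lambda,w)):X\to Y_1\times Y_2$ is injective with closed range; since by Lemma \ref{L:fred} the operator $F^\epsilon_w(\lambda,w)=(A^\epsilon(\lambda,w),B(\lambda,w))$ is Fredholm of index zero and subtracting the compact — relative to the graph norm — perturbation $\mu I$ preserves the index, it is in fact bijective, i.e. $A^\epsilon(\lambda,w)-\mu I:D(A^\epsilon(\lambda,w))\to Y_1$ is bijective. Hence the part of $\sigma(\lambda,w)$ in the sector $|\arg\mu|\le\pi/2+s$ is contained in the bounded set $\{|\mu|<C_2\}$. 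Next, on the resolvent set the map $\mu\mapsto(A^\epsilon(\lambda,w)-\mu I)^{-1}$ is analytic and, by the compact embedding $X\hookrightarrow Y_1$ on bounded $p$-sections combined with the decay in $X$ and $Y_1$, it is compact-operator-valued; the analytic Fredholm theorem then forces $\sigma(\lambda,w)$ inside the sector to consist of isolated eigenvalues of finite algebraic multiplicity, and since they also lie in the bounded region $\{|\mu|<C_2\}$ there are only finitely many of them. Finally, surjectivity of $B(\lambda,w):X\to Y_2$ is immediate: given any $g\in Y_2$, the parameter-dependent problem $(A^\epsilon(\lambda,w)-\mu I)\varphi=0$, $B(\lambda,w)\varphi=g$ is uniquely solvable for one (hence any) $\mu$ with $|\mu|\ge C_2$ in the sector, producing a preimage.

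The main obstacle I anticipate is the uniformity of the parameter-dependent Schauder estimate all the way down the infinite strip, together with the verification that the sector opening $s$ and the threshold $C_2$ can be chosen uniformly for $(\lambda,w)\in\OO$ with $|\lambda|+\|w\|_X\le M$. Both reduce to the limiting constant-coefficient operator and the rigidity supplied by the $o(1)$-decay function spaces $X$, $Y$, so the contradiction/compactness argument of Lemma \ref{L:fred} carries over; but one must be careful that the complementing condition for $(A^\epsilon-\mu I, B)$ is checked for the genuinely parameter-dependent problem and not merely for $\mu=0$, since the relevant half-plane for the boundary ODE now depends on $\mu$. Apart from this, the computations are the standard ones for resolvent estimates of elliptic operators with oblique boundary conditions.
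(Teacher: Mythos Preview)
The paper itself gives no proof of this lemma, deferring to \cite[Lemma~4.5]{Hur06} and \cite[Proposition~4.4]{HeSi98}; the approach there is precisely the parameter-dependent Agmon--Douglis--Nirenberg estimate you outline (Agmon's device of adjoining an extra variable to absorb $\mu$), combined with the limiting-operator argument of Lemma~\ref{L:fred} to handle the unbounded strip. So your strategy for \eqref{E:spectral} is the intended one.

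There is, however, a genuine gap in your deduction of the spectral consequences. You invoke compactness twice: first that $\mu I$ is ``compact relative to the graph norm'' so that the index is preserved, and second that the resolvent is ``compact-operator-valued'' so that analytic Fredholm applies. Both claims are false here: the embedding $D(A^\epsilon(\lambda,w))\hookrightarrow Y_1$ is \emph{not} compact, because $R$ is unbounded and functions in $X$, $Y_1$ carry only $o(1)$ decay with no uniform rate. This lack of compactness is exactly the obstruction that forces the paper to work with $\epsilon>0$ in the first place. The repair is to bypass compactness entirely: for each $\mu$ in the sector $|\arg\mu|\le\pi/2+s$ one has $\mathrm{Re}(\epsilon+\mu)>0$, so the limiting equation
\[
\vartheta_{pp}+(\lambda+2\Gamma_\infty)^{-1}\vartheta_{qq}-(\epsilon+\mu)\vartheta=0\qquad\text{in }R_0
\]
admits only the trivial bounded solution, and the proof of Lemma~\ref{L:fred} goes through verbatim to show $(A^\epsilon(\lambda,w)-\mu I,\,B(\lambda,w))$ is Fredholm of index zero for every such $\mu$. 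With an analytic index-zero Fredholm family that is bijective for $|\mu|\ge C_2$, the analytic Fredholm alternative (in its Fredholm-family form, not the compact-perturbation form) gives discreteness and finite multiplicity of the spectrum in the sector, and boundedness then forces finiteness.
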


\renewcommand{\labelenumi}{{\rm(\arabic{enumi})}}

The proof is in \cite[Lemma 4.5]{Hur06}. See also \cite[Proposition 4.4]{HeSi98}.

Our last preliminary result is the properness of~$F^\epsilon$.

\begin{lemma}[Properness]\label{L:proper}
For each $\delta>0$ and $\epsilon>0$, the nonlinear operator $F^\epsilon$ is (locally) proper on $\Ob$,
i.e, $(F^\epsilon)^{-1}(K) \cap \overline{D}$ is compact in $\mathbb{R} \times X$ 
for each bounded set $D$ in $\Ob$ and for each compact set $K$ in $Y$.
\end{lemma}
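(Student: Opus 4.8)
The plan is to prove that $(F^\epsilon)^{-1}(K)\cap\overline D$ is sequentially compact. Since $F^\epsilon$ is continuous, $(F^\epsilon)^{-1}(K)$ is closed, and $\overline D$ is closed, so it suffices to show that an arbitrary sequence $\{(\lambda_j,w_j)\}$ in $(F^\epsilon)^{-1}(K)\cap\overline D$ has a subsequence converging in $\mathbb{R}\times X$; its limit then automatically lies in the set. As $D$ is bounded, $\{\lambda_j\}$ is bounded, so I would first pass to a subsequence along which $\lambda_j\to\lambda$; as $K$ is compact, pass further so that $F^\epsilon(\lambda_j,w_j)=:(y_{1j},y_{2j})\to(y_1,y_2)$ in $Y$; and as $\{w_j\}$ is bounded in $C^{3+\alpha}_{per}(\overline R)$, pass further --- by the Arzel\`{a}--Ascoli theorem together with a diagonal argument over an exhaustion of $\overline R$ by bounded subdomains --- so that $w_j\to w$ in $C^3_{per}(\overline{R'})$ for every bounded $R'\subset\overline R$, where $w\in C^{3+\alpha}_{per}(\overline R)$ with $\|w\|_{C^{3+\alpha}(\overline R)}\leq\sup_j\|w_j\|_X$ by lower semicontinuity of the H\"older norm. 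Taking pointwise limits in \eqref{D:F_1}, \eqref{D:F_2} and in the term $-\epsilon w$ gives $F^\epsilon(\lambda,w)=(y_1,y_2)$, and passing to the limit in the inequalities defining $\OO$ shows $(\lambda,w)\in\Ob$.

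Next I would linearize along the segment joining $w$ to $w_j$: since $F^\epsilon$ is $C^1$,
\[
F^\epsilon(\lambda_j,w_j)-F^\epsilon(\lambda_j,w)=\mathcal{L}_j[w_j-w],\qquad
\mathcal{L}_j:=\int_0^1 F^\epsilon_w\big(\lambda_j,\,w+t(w_j-w)\big)\,dt=:(A_j,B_j).
\]
A routine check --- using that $w+t(w_j-w)$ is a convex combination of $w$ and $w_j$ at the \emph{same} parameter $\lambda_j$, that $(\lambda_j,w_j)\in\Ob$, and that $(\lambda,w)\in\Ob$ with $a^{-1}(\,\cdot\,;\lambda_j)\to a^{-1}(\,\cdot\,;\lambda)$ uniformly --- shows that, for $j$ large, $A_j$ is uniformly elliptic with coefficients bounded in $C^{2+\alpha}(\overline R)$ and $B_j$ is uniformly oblique, with constants independent of $j$ and $t$. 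Hence the Schauder estimate of the type \eqref{E:Schauder2} applies to $\mathcal{L}_j$ with a constant $C$ independent of $j$, so that
\[
\|w_j-w\|_X\leq C\big(\|A_j[w_j-w]\|_{Y_1}+\|B_j[w_j-w]\|_{Y_2}+\|w_j-w\|_Z\big).
\]
Since $\mathcal{L}_j[w_j-w]=(y_{1j},y_{2j})-F^\epsilon(\lambda_j,w)\to(y_1,y_2)-F^\epsilon(\lambda,w)=0$ in $Y$, the first two terms on the right tend to $0$, and the whole matter reduces to proving $\|w_j-w\|_Z\to0$.

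This convergence in $Z=C^0_{per}(\overline R)$ over the entire semi-infinite strip --- equivalently, ruling out a loss of mass at $p=-\infty$ --- is the heart of the argument and the step I expect to be the main obstacle; it is precisely the difficulty that the regularization $F^\epsilon=(F_1-\epsilon w,F_2)$ is designed to overcome. The key claim is that $\{w_j\}$ decays collectively, i.e. $\eta_N:=\sup_j\,\sup_{\,\overline R\cap\{p\leq-N\}}|w_j|\to0$ as $N\to\infty$. If this fails there are $\kappa>0$, indices $j_k\to\infty$ (they must tend to infinity, since each fixed $w_j\in X$ decays at $p=-\infty$), and points $(q_k,p_k)\in\overline R$ with $p_k\to-\infty$ and $|w_{j_k}(q_k,p_k)|\geq\kappa$. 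I would then study the shifted functions $\vartheta_k(q,p)=w_{j_k}(q,p+p_k)$ on $R_k=\{(q,p)\in R:p+p_k<0\}$: they are uniformly bounded in $C^{3+\alpha}$, hence (diagonal subsequence) converge in $C^3_{\mathrm{loc}}$ to a function $\vartheta_0$ on the bi-infinite strip $R_0=(-L,L)\times\mathbb{R}$ that is even and $2L$-periodic in $q$, has $\|\vartheta_0\|_{C^{3+\alpha}(\overline{R_0})}<\infty$, and --- since $q_k$ subconverges in $[-L,L]$ --- has $|\vartheta_0|\geq\kappa$ at a point, so $\vartheta_0\not\equiv0$. Rewriting the relation $F_1(\lambda_{j_k},w_{j_k})-\epsilon w_{j_k}=y_{1,j_k}$ in terms of $h_{j_k}:=h_{tr}(\,\cdot\,;\lambda_{j_k})+w_{j_k}$ and $\vartheta_k$ and passing to the limit --- using $h_{tr}'(p+p_k)\to(\lambda+2\Gamma_\infty)^{-1/2}=:b$, $h_{tr}''(p+p_k)\to0$, $\gamma(-p-p_k)\to0$ and $y_{1,j_k}(\,\cdot\,,\,\cdot+p_k)\to0$ (collective decay of the convergent family $\{y_{1j}\}$) --- I arrive at the limiting equation
\[
(1+(\vartheta_0)_q^2)(\vartheta_0)_{pp}-2(b+(\vartheta_0)_p)(\vartheta_0)_q(\vartheta_0)_{pq}+(b+(\vartheta_0)_p)^2(\vartheta_0)_{qq}-\epsilon\vartheta_0=0\quad\text{in }R_0,
\]
which is uniformly elliptic since $b+(\vartheta_0)_p\geq\delta>0$ (inherited from $a^{-1}(\,\cdot\,;\lambda_{j_k})+(w_{j_k})_p\geq\delta$ in $\Ob$). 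Freezing the principal coefficients, both $\vartheta_0$ and $-\vartheta_0$ solve a linear uniformly elliptic equation with $C^\alpha$ coefficients and zeroth-order coefficient $-\epsilon<0$ on $R_0$; since $\epsilon>0$, a bounded solution must vanish identically by the maximum principle --- a bounded maximizing sequence for $\vartheta_0$ either realizes its supremum, where the pointwise maximum principle at an interior maximum forces $\sup\vartheta_0\leq0$, or escapes to $p=\pm\infty$, in which case translating $\vartheta_0$ along it and passing to a $C^3_{\mathrm{loc}}$-limit reduces to the attained case. Thus $\vartheta_0\equiv0$, contradicting $\vartheta_0\not\equiv0$; the claim follows. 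Consequently $|w(q,p)|=\lim_j|w_j(q,p)|\leq\eta_N$ for $p\leq-N$, so $w$ too decays at $p=-\infty$; splitting $\overline R$ into a bounded part, where $w_j\to w$ uniformly, and its tail, where $|w_j-w|\leq2\eta_N$, gives $\|w_j-w\|_Z\to0$. By the Schauder bound above, $\|w_j-w\|_X\to0$; as $X$ is a closed subspace of $C^{3+\alpha}_{per}(\overline R)$ this forces $w\in X$, so $(\lambda_j,w_j)\to(\lambda,w)$ in $\mathbb{R}\times X$, which completes the proof.
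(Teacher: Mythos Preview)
Your proof is correct and follows the same overall arc as the paper: reduce compactness to $\|w_j-w\|_Z\to0$ via a shift-and-contradict argument that exploits the regularizing term $-\epsilon w$, then upgrade to $X$ by Schauder estimates. The details differ in two places, and it is worth recording what each choice buys.

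\emph{Limiting equation.} The paper shifts the \emph{difference} $v_j=w_j(\,\cdot\,,\,\cdot+p_j)-w(\,\cdot\,,\,\cdot+p_j)$; the nonlinear terms cancel to first order and the limit $v_0$ solves the \emph{linear} equation $(v_0)_{pp}+(\lambda+2\Gamma_\infty)^{-1}(v_0)_{qq}-\epsilon v_0=0$ on $R_0$, which is killed by a single energy integral. You instead prove collective decay of $\{w_j\}$ by shifting $w_{j_k}$ itself, landing on a genuinely \emph{quasilinear} limiting equation; you then freeze coefficients, invoke the maximum principle, and handle non-attainment of the supremum by a further translation. Both work, but the paper's route is shorter here because the linearity of the limit dispenses with the freezing and second translation.

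\emph{Schauder step.} The paper writes $F^\epsilon=(A^P[\cdot]+f_1-\epsilon\,\cdot\,,\,B^P[\cdot]+f_2)$, applies the Schauder estimate to the principal parts $(A^P(\lambda_j,w_j),B^P(\lambda_j,w_j))$, and controls the lower-order remainders via an interpolation inequality after establishing $C^0$ convergence. Your mean-value linearization $\mathcal{L}_j=\int_0^1F^\epsilon_w(\lambda_j,w+t(w_j-w))\,dt$ is cleaner: it packages principal and lower-order parts at once, and the right-hand side $(y_{1j},y_{2j})-F^\epsilon(\lambda_j,w)$ tends to $0$ directly in $Y$ once you know $F^\epsilon(\lambda,w)=(y_1,y_2)$ and $F^\epsilon(\lambda_j,w)\to F^\epsilon(\lambda,w)$ in the H\"older norms. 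The only care needed---which you note---is that the convex combinations $(\lambda_j,\,w+t(w_j-w))$ inherit uniform ellipticity and obliqueness for $j$ large, which indeed follows from $(\lambda_j,w_j)\in\Ob$, $(\lambda,w)\in\Ob$, and $a^{-1}(\,\cdot\,;\lambda_j)\to a^{-1}(\,\cdot\,;\lambda)$ uniformly.
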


\begin{proof}
Let $\{(\lambda_j,w_j)\}$ be a bounded sequence in $D \subset \Ob$.
Let $\{(y_{1j},y_{2j})\}$ be a convergent sequence in $K  \subset Y$. 
Let $(y_{1j}, y_{2j}) \to (y_1, y_2)$ as $j \to \infty$ and let 
\[
F^\epsilon(\lambda_j,w_j) =(y_{1j},y_{2j}), \qquad j=1,2,\dots.
\]
Our goal is find a subsequence of $\{(\lambda_j,w_j)\}$ which converges in $\mathbb{R} \times X$.

It is immediate that (possibly after relabling) 
$\lambda_j \to \lambda$ as $j \to \infty$ in $\mathbb{R}$ for some $\lambda$ 
and that $w_j \to w$ as $j \to \infty$ in $C^3_{per}(\overline{R'})$ for some $w$ 
for any bounded subset $R'$ of $\overline{R}$.
Moreover, by continuity, \[F^\epsilon(\lambda,w) = (y_1,y_2).\]

It is convenient to write $F^\epsilon$ in the operator form as 
\begin{subequations}\label{E:w-decomp1}
\begin{align}
F^\epsilon_1(\lambda,w)&=A^P(\lambda,w)[w] + f_1(\lambda,w) -\epsilon w{,} \\
F_2(\lambda,w)&=B^P(\lambda,w)[w] +f_2(\lambda,w).
\end{align}
\end{subequations}
Here, $A^P(\lambda, w)$ and $B^P(\lambda, w)$ are the principal parts 
of $A(\lambda,w)$ and $B(\lambda,w)$, respectively;
$f_1(\lambda,w)=\gamma(-p)(a^{-1}(\lambda)+w_p)^3-\gamma(-p)a^{-3}(\lambda)(1+w_q^2)$ and
$f_2(\lambda,w)=\lambda^{-1/2}(2gw-\lambda)(\lambda^{-1/2}+w_p)$.

The first step is to show that $w_j \to w$ in $C^0_{per}(\overline{R})$.
The proof is very similar to that in Lemma \ref{L:fred}. 
Suppose the convergence does not take place; there would be a sequence 
$\{(q_j,p_j)\}\subset\overline{R}$ such that $p_j \to -\infty$ as $j \to \infty$ but for some $\kappa$,
\begin{equation}\label{E:C0limit-1}
|w_j(q_j,p_j) - w(q_j,p_j)| \geq \kappa>0 \qquad \text{for all $j$}.
\end{equation}

For each $j$, as is done in Lemma~\ref{L:fred}, let us consider the function 
\[v_j(q,p) = w_j(q,p+p_j) -w(q,p+p_j),\]
defined in the domain $R_j=\{(q,p): -L<q<L,\,\, -\infty< p<-p_j\}$ (same as Lemma~\ref{L:fred}). 
It is straightforward to show that each $v_j$ satisfies 
\begin{align*} 
A^P_j(\lambda_j, w_j(\cdot, \cdot+&p_j) )[v_j] - \epsilon  v_j  \\
=&-\left(A^P_j(\lambda_j, w_j(\cdot, \cdot+p_j))
-A^P_j(\lambda, w(\cdot, \cdot+p_j))\right)w(\cdot, \cdot+p_j) \\
&-f_{1j}(\lambda_j, w_j(\cdot, \cdot+p_j))+f_{1j}(\lambda, w(\cdot,\cdot+p_j)) \\
&+\epsilon w(\cdot, \cdot+p_j)+y_{1j}(\cdot, \cdot+p_j)-y_1(\cdot, \cdot +p_j)
\end{align*}
in $R_j$. Here, $A^P_j(\lambda,w)$ is obtained 
by shifting the coefficient function $a(p;\lambda)$ of $A^P(\lambda, w)$ by $-p_j$ in the $p$-axis,
and likewise, $f_{1j}(\lambda, w)$ is obtained by shifting 
$a(p;\lambda)$ and $\gamma(-p)$ by $-p_j$ in the $p$-axis;
the value at (q,p) of the function $w_j(\cdot, \cdot+p_j)$ is given by $w_j(q,p+p_j)$
and $w(\cdot, \cdot+p_j)$, $y_{1j}(\cdot,\cdot+p_j)$, and $y_1(\cdot, \cdot+p_j)$ are defined similarly.

Passing to the limit as $j \to \infty$ of the above, similarly to the proof of Lemma~\ref{L:fred}, 
we conclude that there exist
the limiting function $v_0$ of $v_j$ in the $C^0_{per}$ class, 
the limiting domain $R_0=\{(q,p): -L<q<L,\, -\infty< p<\infty\}$ of $R_j$,
the limiting operator $\partial_p^2+(\lambda+2\Gamma_{\infty})^{-1}\partial_q^2-\epsilon I$ such that
\[
(v_0)_{pp}+(\lambda+2\Gamma_{\infty})^{-1}(v_0)_{qq}-\epsilon v_0=0\qquad \text{in } R_0.
\]
Indeed, $\lambda_j \to \lambda$ as $j \to \infty$ and 
$\nabla w(q,p+p_j) \to 0$ as $p_j \to -\infty$ for all $(q,p) \in \overline{R}$.
Since due to the limiting equation the energy integral \eqref{E:energy} of $v_0$ is zero,
it follows that $v_0=0$ in $R_0$. This, however, contradicts \eqref{E:C0limit-1} and proves  
the convergence of $\{w_j\}$ to~$w$ as $j \to \infty$ in $C^0_{per}(\overline{R})$.

Next, since $\{w_j\}$ is uniformly bounded under the $C^{3+\alpha}(\overline{R})$ norm, 
an interpolation inequality (see \cite[Lemma 6.32]{GiTr01} and  \cite[Theorem 3.2.1]{Kry96}) asserts that
$w_j \to w$ in $C^{k'+\alpha'}(\overline{R})$ for any $k'+\alpha' <3+\alpha$. 
Indeed, for any $s>0$ there exists a constant $C=C(s)>0$ such that 
\begin{equation}\label{E:interp}
\| w_j -w\|_{C^{k'+\alpha'}(\overline{R})} 
\leq  s \|w_j-w\|_{C^{3+\alpha}(\overline{R})}+C(s)\|w_j -w\|_{C^0(\overline{R})}.
\end{equation}

The final step is to employ the Schauder theory to obtain the convergence of $\{w_j\}$ 
in $C^{3+\alpha}(\overline{R})$. By virtue of the decomposition \eqref{E:w-decomp1}, 
the difference $w_j-w$ satisfies 
\begin{align*}
A^P(\lambda_j,w_j)[w_j-w]=y_{1j}-y_{1}-(A^P(\lambda_j,w_j)&-A^P(\lambda,w))[w] \\
&-(f_1(\lambda_j,w_j)-f_1(\lambda,w))+\epsilon (w_j-w)\\
B^P(\lambda_j,w_j)[w_j-w]=y_{2j}-y_{2}-(B^P(\lambda_j,w_j)&-B^P(\lambda,w))[w] \\
&-(f_2(\lambda_j,w_j)-f_2(\lambda,w)).
\end{align*}
Since $(\lambda_j,w_j) \in \Ob$, the Schauder estimates~\cite{ADN59} applies to
the operator $(A^P(\lambda_j, w_j), B^P(\lambda_j,w_j))$ to yield
\begin{equation}\label{E:Schauder4}
\| w_j-w\|_X \leq C(\|A^P(\lambda_j,w_j)[w_j-w]\|_{Y_1} +
\|B^P(\lambda_j,w_j)[w_j-w]\|_{Y_2} +\| w_j-w\|_Z).
\end{equation}

The result of the first step is that 
the last term on the right side of the above inequality tends zero as $j \to \infty$.
Since $\{w_j\} \subset X$ is bounded, by the interpolation inequality \eqref{E:interp} it follows that
coefficients of $A^P(\lambda_j, w_j)$ are equicontinuous in $j$. 
Since $\|w_j \|_{Y_1} \to \|w\|_{Y_1}$ as $j \to \infty$, moreover, it follows that
\[
\|(A^P(\lambda_j,w_j)-A^P(\lambda, w))[w]\|_{Y_1} \to 0 \qquad \text{as }j \to \infty.
\]
The convergence
\[
\| f_1(\lambda_j,w_j)-f_1(\lambda,w)\|_{Y_1} \to 0 \qquad \text{as }j \to \infty
\] 
follows by that $f_1$ consists of polynomial expressions of $w_p$ and $w_q$. 
These together with the convergence of $\{y_{1j}\}$ in $Y_1$ yield that 
\[
\| A^P(\lambda_j,w_j)[w_j-w]\|_{Y_1}\to 0 \qquad \text{as } j \to \infty.
\]

On the other hand, the standard Schauder estimates and the embedding properties
of H\"{o}lder spaces in the bounded domain $T$ confirm
\[
\|B^P(\lambda_j,w_j)[w_j-w]\|_{Y_2} \to 0 \qquad \text{as }j \to \infty.
\]
By \eqref{E:Schauder4}, therefore, $w_j \to w$  in $C^{3+\alpha}(\overline{R})$.
The assertion then follows since $X$ is a closed subspace of $C^{3+\alpha}(\overline{R})$.
\end{proof}

With the properties of $F^\epsilon$ established above in hand, 
for each $\delta>0$ and for each $\epsilon>0$ we define 
a generalization of the Leray-Schauder degree due to Healey and Simpson \cite{HeSi98} 
for $F^\epsilon(\lambda,w)$, where $(\lambda,w) \in \OO$. 
The detailed development is in \cite[Section 4]{HeSi98}.
Our interest in degree theory lies in that the degree is invariant under homotopy
and hence it can be used in global bifurcation theory.

\section{Existence theory for rotational Stokes waves}\label{S:existence}

Undertaken is the study of global bifurcation for \eqref{E:main}.
For each $0<\epsilon<1$, the existence of nontrivial solutions of \eqref{E:approx} is established
in a neighborhood of the trivial solution for a parameter value $\lambda^\epsilon$. 
Then, for each $0<\epsilon<1$ the local curve of solutions extends 
to a global connected set of solutions of \eqref{E:approx}. 
Finally, a global existence theory for \eqref{E:main} is obtained 
via abstract bifurcation theory and Wyburn's lemma in topology.

\subsection{Local bifurcation for approximate problems}\label{SS:local}
It is readily seen that for each $\epsilon \geq 0$, the points $(\lambda,0)$, 
where $\lambda \in (-2\Gamma_{\inf}, \infty)$, 
form the line of trivial solutions in $\mathbb{R} \times X$.
The linearization of $F^\epsilon(\lambda,w)$ about the trivial solution $(\lambda,0)$ is
$F_w^\epsilon(\lambda, 0)=(A^\epsilon(\lambda,0),B(\lambda,0))$, where
\begin{align*}
A^\epsilon(\lambda,0)[\varphi]&= a^{-3}\A(a^3\A\varphi_p)_p+a^{-2}\A\varphi_{qq}-\epsilon \varphi{,} \\
B(\lambda,0)[\varphi]&=\left[-2\lambda^{1/2}\varphi_p+2g\lambda^{-1}\varphi\right]_T.
\end{align*}
Since $F^\epsilon$ is continuously Fr\'echet differentiable 
and $F^\epsilon_w(\lambda,0)$ is Fredholm of index zero, 
a necessary condition for bifurcation from a trivial solution $(\lambda,0)$ 
is that $F^\epsilon_w(\lambda,0):X \to Y$ is not injective, or equivalently, 
the boundary value problem of the self-adjoint equation
\begin{subequations}\label{E:eigen}%
\begin{alignat}{2}
(a^3\A\varphi_p)_p+(a\A&\varphi_q)_q-\epsilon a^3(\lambda) \varphi=0&  &\quad
\text{in \, $R$}{,} \\
\lambda^{3/2}&\varphi_p=g\varphi& & \quad \text{on \, $T$}
\end{alignat}%
\end{subequations}%
admits a nontrivial solution in~$X$.

\begin{lemma}[Bifurcation points]\label{L:eigen}
Suppose that $\gamma \in C^{1+\alpha}([0,\infty))$, $\alpha \in (0,1)$, satisfies \eqref{C:bifurcation}. 

\renewcommand{\labelenumi}{{\rm(\roman{enumi})}}
\begin{enumerate}
\item For each $0\leq \epsilon< 1$ there exist 
a unique $\lambda^\epsilon \in (-2\Gamma_{\inf}, gL/\pi-2\Gamma_{\inf}]$ 
and a unique (up to constant multiple) nontrivial solution $\varphi^\epsilon \in X$ to \eqref{E:eigen}.

\item For each $0\leq \epsilon< 1$, $\lambda^0\leq \lambda^\epsilon$ 
and $\lambda^\epsilon \to \lambda^0$ as $\epsilon\to 0+$. 
\end{enumerate}
\end{lemma}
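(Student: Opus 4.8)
The plan is to diagonalize \eqref{E:eigen} in the $q$-variable, reducing it to a one-parameter family of ordinary differential eigenvalue problems on the half-line $(-\infty,0)$. Since every $\varphi\in X$ is even and $2L$-periodic in $q$ and the coefficients of \eqref{E:eigen} depend on $p$ alone, expanding $\varphi(q,p)=\sum_{n\ge0}\phi_n(p)\cos(n\pi q/L)$ forces each Fourier mode to satisfy
\begin{equation*}
\big(a^3(\lambda)\phi_n'\big)'-\Big((n\pi/L)^2 a(\lambda)+\epsilon a^3(\lambda)\Big)\phi_n=0\ \text{ in }(-\infty,0),\qquad \lambda^{3/2}\phi_n'(0)=g\phi_n(0),
\end{equation*}
together with $\phi_n(p)\to0$ as $p\to-\infty$, where $a(\lambda)=a(p;\lambda)=(\lambda+2\Gamma(p))^{1/2}$ is as in \eqref{D:a}. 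Thus \eqref{E:eigen} admits a nontrivial solution in $X$ precisely when one of these mode problems does. For $\lambda>-2\Gamma_{\inf}$ the coefficient $a(\lambda)$ is bounded below by $(\lambda+2\Gamma_{\inf})^{1/2}>0$, so the operator is uniformly elliptic and asymptotically autonomous at $p=-\infty$ with a one-dimensional stable direction; hence for each $(n,\lambda,\epsilon)$ there is, up to a scalar, a unique decaying solution $\phi_n(\cdot\,;\lambda,\epsilon)$, depending continuously on the parameters by standard ODE theory.

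Next I would extract from this a scalar ``characteristic function'' whose zero set detects bifurcation. Multiplying the mode equation by $\phi_n$, integrating over $(-\infty,0)$, and using $\Gamma(0)=0$ (so $a^3(0;\lambda)=\lambda^{3/2}$) together with the decay of $\phi_n,\phi_n'$ at $-\infty$ gives
\begin{equation*}
\lambda^{3/2}\phi_n'(0)\phi_n(0)=\int_{-\infty}^0\Big(a^3(\phi_n')^2+(n\pi/L)^2 a\phi_n^2+\epsilon a^3\phi_n^2\Big)\,dp>0,
\end{equation*}
so $\phi_n(0)\ne0$; normalizing $\phi_n(0)=1$ I set $g_n^\epsilon(\lambda):=\lambda^{3/2}\phi_n'(0)$, which equals this positive integral. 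The mode $n$ then yields a nontrivial solution of \eqref{E:eigen} exactly when the boundary condition is met, i.e. when $g_n^\epsilon(\lambda)=g$, and a standard variational argument identifies $g_n^\epsilon(\lambda)$ with the minimum of $\int_{-\infty}^0\big(a^3(\psi')^2+(n\pi/L)^2 a\psi^2+\epsilon a^3\psi^2\big)\,dp$ over $\psi$ with $\psi(0)=1$, $\psi(p)\to0$ at $p\to-\infty$. Since $a(p;\lambda)$ is strictly increasing in $\lambda$ for each $p$, this variational formula makes $g_n^\epsilon$ continuous and strictly increasing in $\lambda$, and likewise strictly increasing in $n$ and in $\epsilon$.

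With these properties the lemma assembles as follows. Testing the variational problem at $\lambda=-2\Gamma_{\inf}$ with $\psi(p)=e^p$ and using $\epsilon<1$ (so $1+\epsilon<2$) gives
\[
\lim_{\lambda\downarrow-2\Gamma_{\inf}}g_1^\epsilon(\lambda)\le\int_{-\infty}^0\Big(2(2\Gamma(p)-2\Gamma_{\inf})^{3/2}+(\pi/L)^2(2\Gamma(p)-2\Gamma_{\inf})^{1/2}\Big)e^{2p}\,dp<g,
\]
the last inequality being \eqref{C:bifurcation}. Replacing $a$ by its lower bound $(\lambda+2\Gamma_{\inf})^{1/2}$ and computing the resulting constant-coefficient minimum gives $g_n^\epsilon(\lambda)\ge(n\pi/L)(\lambda+2\Gamma_{\inf})$, so $g_1^\epsilon(\lambda)\to\infty$ as $\lambda\to\infty$. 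By monotonicity and continuity in $\lambda$, $g_1^\epsilon$ therefore attains $g$ at a unique $\lambda^\epsilon>-2\Gamma_{\inf}$, and the lower bound evaluated at $\lambda^\epsilon$ forces $\lambda^\epsilon\le gL/\pi-2\Gamma_{\inf}$. At $\lambda=\lambda^\epsilon$, strict monotonicity in $n$ gives $g_0^\epsilon(\lambda^\epsilon)<g<g_2^\epsilon(\lambda^\epsilon)<\cdots$, so only the mode $n=1$ survives and $\varphi^\epsilon(q,p)=\phi_1(p;\lambda^\epsilon)\cos(\pi q/L)$ is the unique solution up to a constant multiple; that $\lambda^\epsilon$ is the only such value in the stated interval follows from combining $g_n^\epsilon(\lambda)\ge(n\pi/L)(\lambda+2\Gamma_{\inf})$ with the inequality $g_n^\epsilon>g_1^\epsilon$ for $n\ge2$, together with a separate inspection of the mode $n=0$ (for $\epsilon=0$ the equation $(a^3\phi_0')'=0$ has no nontrivial solution decaying at $-\infty$). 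Finally, part (ii): monotonicity of $g_1^\epsilon$ in $\epsilon$ forces the crossing $\lambda^\epsilon$ to vary monotonically with $\epsilon$, which gives the comparison with $\lambda^0$, while $g_1^\epsilon\to g_1^0$ locally uniformly in $\lambda$ as $\epsilon\downarrow0$ (the extra term $\epsilon a^3\psi^2$ vanishes) together with the strict monotonicity of $g_1^0$ through $g$ at $\lambda^0$ yields $\lambda^\epsilon\to\lambda^0$.

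The step I expect to be delicate is the passage to the degenerate endpoint $\lambda=-2\Gamma_{\inf}$, where $a(p;\lambda)$ vanishes on the interior level set $\{\Gamma(p)=\Gamma_{\inf}\}$: one must verify that the test-function bound afforded by \eqref{C:bifurcation} at $\lambda=-2\Gamma_{\inf}$ is genuinely inherited by $\lim_{\lambda\downarrow-2\Gamma_{\inf}}g_1^\epsilon(\lambda)$, and more generally that the decaying solution is unique and lies in $X$ with the required $C^{3+\alpha}$ regularity and uniform exponential decay, so that the reduction to the mode problems is faithful. For both points the variational characterization of $g_n^\epsilon$ — which remains meaningful, monotone, and upper semicontinuous down to $\lambda=-2\Gamma_{\inf}$ — is the right tool; the remaining bookkeeping over Fourier modes inside the interval is routine once the monotonicities and the a priori bound $g_n^\epsilon(\lambda)\ge(n\pi/L)(\lambda+2\Gamma_{\inf})$ are in hand.
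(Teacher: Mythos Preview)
Your approach is correct and closely parallels the paper's, with a modest but genuine shift in viewpoint. The paper casts the mode problem as a singular Sturm--Liouville eigenvalue problem: it fixes the Robin boundary condition $\lambda^{3/2}v'(0)=gv(0)$, defines the Rayleigh quotient
\[
R^\epsilon(v;\lambda)=\frac{-gv^2(0)+\int_{-\infty}^0 a^3(v')^2\,dp+\epsilon\int_{-\infty}^0 a^3 v^2\,dp}{\int_{-\infty}^0 a\,v^2\,dp},
\]
sets $\Lambda^\epsilon(\lambda)=\inf R^\epsilon$, and seeks $\lambda$ with $\Lambda^\epsilon(\lambda)=-(\pi/L)^2$. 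You instead fix the mode number $n$, normalize the decaying solution by $\phi_n(0)=1$, and define $g_n^\epsilon(\lambda)=\lambda^{3/2}\phi_n'(0)$ as the constrained Dirichlet minimum, seeking $\lambda$ with $g_1^\epsilon(\lambda)=g$. The two parametrizations are dual---$\Lambda^\epsilon(\lambda)=-(\pi/L)^2$ is exactly $g_1^\epsilon(\lambda)=g$---and both pivot on the same test function $e^p$ at the degenerate endpoint to invoke \eqref{C:bifurcation}, the same monotonicity in $\lambda$, and the same constant-coefficient comparison for the upper bound on $\lambda^\epsilon$. Your formulation makes the monotonicity in $n$ and $\epsilon$ more transparent and yields the bound $\lambda^\epsilon\le gL/\pi-2\Gamma_{\inf}$ directly, while the paper's Rayleigh-quotient framing handles the simplicity of the eigenvalue and positivity of $\Phi^\epsilon$ more naturally through Sturm--Liouville theory. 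One remark: the paper explicitly restricts to $k=1$ and does not claim uniqueness among all Fourier modes (``there may be multiple solutions\dots corresponding to different values of $k$''), so your attempt to rule out crossings for $n\ge2$ and $n=0$ inside the interval, while not wrong, goes beyond what the lemma actually asserts; the stated uniqueness refers only to the $n=1$ branch. Finally, the paper closes with an explicit exponential-decay estimate for $\Phi^\epsilon$ via the comparison theorem for ODEs, which you flag as delicate but do not carry out---you should include it to confirm $\varphi^\epsilon\in X$.
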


\begin{proof}
(i) Let $0\leq \epsilon <1$ be held fixed. 
In view of evenness and periodicity of $\varphi \in X$ in the $q$-variable, 
we look for a solution of the form $\varphi^\epsilon(q,p)=\eP(p)\cos k(\pi/L)p$, 
where $k \geq 0$ is an integer. Then, $\eP$ solves the ordinary differential equation
\[(a^3\A v ')' - k^2(\pi/L)^2 a\A v- \epsilon a^3(\lambda)v=0.\]

Let us consider for $\lambda \in (-2\Gamma_{\inf}, \infty)$  the (singular) Sturm-Liouville problem
\begin{equation}\label{E:Sturm}
\begin{cases}
L^\epsilon v:= - (a^3\A v')' +\epsilon a^3\A v=\mu(\lambda) a(\lambda) v
\qquad \text{for }-\infty<p< 0{,} \\
\lambda^{3/2}v'(0)=gv(0) \end{cases}
\end{equation}
subject to the boundary conditions $v,v' \to 0$ as $p \to -\infty$.
Here, the prime denotes differentiation with respect to the $p$-variable.
Our aim is to find a $\lambda^\epsilon$ such that 
$\mu(\lambda^\epsilon)=-k^2(\pi/L)^2$ is a generalized eigenvalue 
of \eqref{E:Sturm} (such that $L^\epsilon v= \mu a(\el) v$ for some $v \not\equiv 0$), 
where $k\geq 0$ is an integer.

The proof is based on the variational consideration of \eqref{E:Sturm}. 
Let us define the Rayleigh quotient 
\begin{align}\label{D:R}
R^\epsilon(\lambda)=R^\epsilon(v;\lambda)= \frac{-gv^2(0) 
+ \int^0_{-\infty} a^3\A(v')^2dp+\epsilon\int^0_{-\infty}a^3\A v^2dp\,}{\int^0_{-\infty} a\A v^2dp}
\end{align}
and consider the minimization problem
\begin{equation}
\Lambda^\epsilon(\lambda) = \inf\{\,R^\epsilon(v;\lambda):
v \in H^1((-\infty,0))\text{ and }v \not\equiv 0\, \},\label{D:Lambda}
\end{equation}
where $H^1((-\infty,0))$ denotes the $L^2$-Sobolev space in the $p$-variable.
It is well known that $L^\epsilon$ with the boundary conditions in \eqref{E:Sturm} 
has the continuous spectrum $[\epsilon, \infty)$.
The Rayleigh principle thus asserts that $\Lambda^\epsilon(\lambda)$ is 
the lowest (generalized) eigenvalue of \eqref{E:Sturm} 
if and only if $\Lambda^\epsilon(\lambda) \in (-\infty, \epsilon)$.
Furthermore, such an eigenvalue $\Lambda^\epsilon(\lambda)$ is simple.
Our aim is then to find a $\el$ such that $\Lambda^\epsilon(\el)=-k^2(\pi/L)^2$.
There may be multiple solutions, for instance, corresponding to different values of $k$.
Here, we restrict ourselves to finding one for $k=1$.
Note that $\Lambda^\epsilon(\lambda)$ is a $C^1$ function of~$\lambda$.

First, for $\lambda \in [gL/\pi+2\Gamma_{\inf},\infty)$ it is straightforward that 
\begin{align*}
\int^0_{-\infty} \big(a\A  (\pi/L)^2v^2&+a^3\A(v')^2+\epsilon a^3\A v^2\big)dp\\
&\geq \int^0_{-\infty} \big(a\A  (\pi/L)^2v^2+a^3\A(v')^2\big)dp\\
&\geq 2\pi/L \int^0_{-\infty}  a^2(\lambda)vv'dp
\geq 2g\int^0_{-\infty}vv' dp = gv^2(0)
\end{align*}
for every $v \in H^1((-\infty,0))$. The second inequality uses the Schwarz inequality.
Thus, $R^\epsilon(\lambda)\geq -(\pi/L)^2$, 
and in turn $\Lambda^\epsilon(\lambda) \geq -(\pi/L)^2$.

Next, provided that \eqref{C:bifurcation} holds, one can show that
\begin{align*}
\Lambda^\epsilon(-2\Gamma_{\inf})&\leq R^\epsilon(e^p;-2\Gamma_{\inf}) \\&=
\frac{-g+\int^0_{-\infty} a^3(-2\Gamma_{\inf}) e^{2p}\;dp
+\epsilon \int^0_{-\infty} a^3(-2\Gamma_{\inf})e^{2p}\; dp}
{\int^0_{-\infty} a(-2\Gamma_{\inf}) e^{2p}\;dp}<-(\pi/L)^2.
\end{align*}
By continuity, then, there exists $\el \in (-2\Gamma_{\inf}, gL/\pi+2\Gamma_{\inf}]$ 
such that $\Lambda^\epsilon(\el)=-(\pi/L)^2$.

The uniqueness of $\el$ follows by that 
$\Lambda^\epsilon(\lambda)$ is a monotonically increasing function of $\lambda$
as long as $\Lambda^\epsilon<0$. 
The proof is nearly identical to that in \cite[Lemma 3.4]{CoSt04} 
in the finite-depth case (and when $\epsilon=0$), and hence it is omitted. 

Next, let $\eP \in H^1((-\infty,0))$ be an eigenfunction of \eqref{E:Sturm} 
corresponding to the generalized eigenvalue $\mu(\el)=-(\pi/L)^2$. 
Since the eigenvalue is simple, $\eP$ is unique up to constant multiple.
It follows from regularity theory that $\eP$ is smooth. 
Moreover, since \[(\el+2\Gamma_{\inf})^{1/2} \leq a(\el) \leq (\el+2\Gamma_{\sup})^{1/2},\]
where $\Gamma_{\sup}=\sup_{-\infty<p<0} \Gamma(p)$,  
the comparison theorem for second-order ordinary differential equations 
\cite[Chapter~8]{CoLe55} asserts that $\eP (p)>0$ for all $-\infty<p<0$ and it decays exponentially:
\[
|\eP(p)| \leq K \exp\left(p\frac{(\el+2\Gamma_{\inf})^{1/2}}{(\el+2\Gamma_{\sup})^{3/2}}\right)
\]
for some constant $K>0$. Therefore, $\varphi^\epsilon(q,p)=\eP(p)\cos (\pi/L)q$ is in~$X$.

(ii) The assertion follows since $\Lambda^\epsilon(\lambda)$ is continuous 
and it is nondecreasing function of $\epsilon \geq 0$. 
\end{proof}

It follows as an application of the local bifurcation theorem from a simple eigenvalue \cite{CrRa71}
that for each $0<\epsilon <1$ there emanates from $(\el,0)$ a local curve in $\mathbb{R} \times X$
of solutions to \eqref{E:approx}. 

\begin{proposition}[Local bifurcation for approximate problems]\label{P:local}
Let $L>0$ be held fixed and let 
$\gamma \in C^{1+\alpha}([0,\infty))$, $\alpha \in (0,1)$, satisfy \eqref{C:bifurcation}.

For each $0<\epsilon <1$, there exist $s_0>0$ sufficiently small and a $C^1$-curve 
\[ \mathcal{C}^\epsilon_{loc} = \{ (\lambda(s), w(s)) \in \mathbb{R} \times X : |s| <s_0 \}, \]
where each point $(\lambda(s),w(s))$ in the curve is a solution to \eqref{E:approx}. 

At $s=0$, the solution $(\lambda(0), w(0))=(\el,0)$ 
corresponds to a trivial shear flow under the flat surface.
At $s>0$, the corresponding nontrivial solutions $(\lambda(s), w(s))$ enjoys the following properties:
\begin{alignat}{2}
\lambda(s)&=\lambda^\epsilon+O(s) \qquad & &\text{as $s \to 0$}, \notag \\
w(s)&=s \Phi^\epsilon(p)\cos (\pi/L)q + O(s^2) \qquad & &\text{as $s\to 0$}.\label{E:w(s)}
\end{alignat}
\end{proposition}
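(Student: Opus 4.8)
The plan is to apply the local bifurcation theorem from a simple eigenvalue of Crandall and Rabinowitz \cite{CrRa71} to $F^\epsilon\colon\mathbb{R}\times X\to Y$ at the parameter value $\lambda^\epsilon$ produced by Lemma~\ref{L:eigen}. Fix $0<\epsilon<1$. Since $\Gamma_{\inf}=\inf_{p\le0}\Gamma(p)\le\Gamma(0)=0$ and $\lambda^\epsilon>-2\Gamma_{\inf}$, we have $\lambda^\epsilon>0$ and $\lambda^\epsilon+2\Gamma(p)>0$ for all $p\le0$, so $F^\epsilon$ is well defined and, by Section~\ref{S:degree}, at least twice continuously Fr\'echet differentiable near $(\lambda^\epsilon,0)$, with $F^\epsilon(\lambda,0)=0$ for all such $\lambda$. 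By Lemma~\ref{L:fred}, $F_w^\epsilon(\lambda^\epsilon,0)$ is Fredholm of index zero, and by Lemma~\ref{L:eigen}(i) its kernel is the one-dimensional span of $\varphi^\epsilon(q,p)=\Phi^\epsilon(p)\cos(\pi/L)q$: separating $q$-Fourier modes turns the self-adjoint problem \eqref{E:eigen} into the family of singular Sturm--Liouville problems of the proof of Lemma~\ref{L:eigen}, of which only the mode $\cos(\pi/L)q$ admits a nontrivial decaying solution at $\lambda=\lambda^\epsilon$. Index zero then forces the codimension of the range of $F_w^\epsilon(\lambda^\epsilon,0)$ to equal $1$.

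The decisive and, in my estimation, only delicate point is the transversality condition $F_{\lambda w}^\epsilon(\lambda^\epsilon,0)[\varphi^\epsilon]\notin\operatorname{range}F_w^\epsilon(\lambda^\epsilon,0)$. Since $F_w^\epsilon(\lambda^\epsilon,0)=(A^\epsilon(\lambda^\epsilon,0),B(\lambda^\epsilon,0))$ arises from the formally self-adjoint problem \eqref{E:eigen}, I would identify the range by testing: writing $A^\epsilon(\lambda^\epsilon,0)[\varphi]=y_1$ as $(a^3\varphi_p)_p+(a\varphi_q)_q-\epsilon a^3\varphi=a^3y_1$ with $a=a(\,\cdot\,;\lambda^\epsilon)$, multiplying by $\varphi^\epsilon$, integrating over $R$, integrating by parts, and using periodicity together with $B(\lambda^\epsilon,0)[\varphi^\epsilon]=0$ and $B(\lambda^\epsilon,0)[\varphi]=y_2$, one finds that $(y_1,y_2)$ lies in the range precisely when
\[
\iint_R a^3(p;\lambda^\epsilon)\,\varphi^\epsilon\,y_1\,dq\,dp+\frac{\lambda^\epsilon}{2}\int_T\varphi^\epsilon\,y_2\,dq=0,
\]
the single necessary condition being sufficient by the Fredholm alternative. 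Substituting $F_{\lambda w}^\epsilon(\lambda^\epsilon,0)[\varphi^\epsilon]=\partial_\lambda\bigl(A^\epsilon(\lambda,0)[\varphi^\epsilon],B(\lambda,0)[\varphi^\epsilon]\bigr)\big|_{\lambda=\lambda^\epsilon}$ into the left-hand side, differentiating the identities $A^\epsilon(\lambda^\epsilon,0)[\varphi^\epsilon]=0$ and $B(\lambda^\epsilon,0)[\varphi^\epsilon]=0$ in $\lambda$, and integrating by parts once more, a routine first-order eigenvalue-perturbation computation collapses this pairing to a nonzero multiple of $\tfrac{d}{d\lambda}\Lambda^\epsilon(\lambda)\big|_{\lambda=\lambda^\epsilon}$, where $\Lambda^\epsilon$ is the lowest-eigenvalue function of Lemma~\ref{L:eigen}. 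Transversality is therefore equivalent to $\Lambda^\epsilon$ crossing the level $-(\pi/L)^2$ with nonzero slope, which holds because $\Lambda^\epsilon$ is strictly increasing wherever it is negative (the monotonicity established in the proof of Lemma~\ref{L:eigen}) and $\Lambda^\epsilon(\lambda^\epsilon)=-(\pi/L)^2<0$. The main effort is the bookkeeping: carrying the correct weight $a^3$ and the correct boundary terms through the integrations by parts so that the pairing reduces cleanly to $\tfrac{d}{d\lambda}\Lambda^\epsilon(\lambda^\epsilon)\ne0$.

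With all hypotheses in place, the Crandall--Rabinowitz theorem \cite{CrRa71} yields, for each $0<\epsilon<1$, an $s_0>0$ and a $C^1$-curve $\mathcal{C}^\epsilon_{loc}=\{(\lambda(s),w(s)):|s|<s_0\}$ of solutions of \eqref{E:approx} with $(\lambda(0),w(0))=(\lambda^\epsilon,0)$ and $w(s)=s\bigl(\varphi^\epsilon+\psi(s)\bigr)$, where $\psi(s)$ belongs to a fixed closed complement of $\operatorname{span}\{\varphi^\epsilon\}$ in $X$, $\psi(0)=0$, and $s\mapsto(\lambda(s),\psi(s))$ is $C^1$ near $s=0$ (the $C^1$ regularity uses that $F^\epsilon$ is $C^2$). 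Hence $\lambda(s)=\lambda^\epsilon+O(s)$ and $w(s)=s\,\Phi^\epsilon(p)\cos(\pi/L)q+O(s^2)$ as $s\to0$, the solution $(\lambda(s),w(s))$ is nontrivial for $0<|s|<s_0$ since $w(s)\not\equiv0$, and at $s=0$ the vanishing of $w$ means $h=h_{tr}(\,\cdot\,;\lambda^\epsilon)$, the horizontal shear flow under the flat surface of Lemma~\ref{L:trivial}. This gives the asserted local curve.
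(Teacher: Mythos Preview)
Your proof is correct and follows precisely the approach the paper intends: an application of the Crandall--Rabinowitz theorem \cite{CrRa71}, with the simple-kernel property drawn from Lemma~\ref{L:eigen} and the Fredholm property from Lemma~\ref{L:fred}. The paper in fact omits the proof entirely, referring to \cite[Appendix~A]{Hur06}, so your write-up supplies more detail than the paper does; the self-adjoint pairing you set up to identify the range and reduce transversality to $\tfrac{d}{d\lambda}\Lambda^\epsilon(\lambda^\epsilon)\neq 0$ is the standard route in this literature. One small logical slip worth tightening: strict monotonicity of a $C^1$ function does not by itself force a nonzero derivative at a given point, so the sentence ``which holds because $\Lambda^\epsilon$ is strictly increasing'' is not quite a proof; however, the monotonicity argument you cite (ultimately \cite[Lemma~3.4]{CoSt04}) actually computes $\tfrac{d}{d\lambda}\Lambda^\epsilon>0$ directly, so the conclusion stands once you invoke that computation rather than mere monotonicity.
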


The proof is almost identical to that in \cite[Appendix~A]{Hur06}, and hence it is omitted.

One may replace the sufficient condition \eqref{C:bifurcation} for bifurcation 
by a more general condition that the system
\begin{equation}\label{C:bifurcation-g}
\begin{cases}
(a^3\A v')' -\epsilon a^3\A v=-\left(\pi/L\right)^2 a(\lambda) v
\qquad \text{for } p \in (-\infty, 0){,} \\
\lambda^{3/2}v'(0)=gv(0) \end{cases}
\end{equation}
with $v,v' \to 0$ as $p \to -\infty$, admits a nontrivial solution 
for some $\lambda \in (-2\Gamma_{\inf}, \infty)$.

\begin{remark*}[Bifurcation at $\epsilon=0$]\rm
A candidate bifurcation point $\lambda^0$ and a transversal solution $\varphi^0$ of 
$F_w(\lambda^0,0)[\varphi]=0$, when $\epsilon=0$ in Lemma \ref{L:eigen} 
exist for the singular problem \eqref{E:main}. 
But, even the local bifurcation for \eqref{E:main} is {\em singular}.
For, $F_w(\lambda,0): X \to Y$ is not Fredholm, and thus 
local bifurcation theorem (due to Crandall and Rabinowitz \cite{CrRa71}, for instance) is not applicable.
Power series methods may yield a direct proof of small-amplitude solutions of \eqref{E:main},
but since the present purpose is global existence theory, we proceed with singular theory of bifurcation.

For the future reference, recorded here are that 
$\lambda^0 \in (-2\Gamma_{\inf}, gL/\pi-2\Gamma_{\inf}]$ is the unique solution of
\[ \Lambda^0(\lambda)=\inf_{v \in H^1((-\infty,0))} 
\frac{-gv^2(0) + \int^0_{-\infty} a^3\A(v')^2dp\,}{\int^0_{-\infty} a\A v^2dp}=-\left(\pi/L\right)^2\]
and that $\varphi^0(q,p)=\Phi^0(p)\cos (\pi/L)q \in X$ is 
a unique (up to constant multiple) nontrivial solution of $F_w(\lambda^0,0)[\varphi]=0$. 
Moreover, $\Phi^0(p)>0$ for all $-\infty<p<0$ and it is smooth and decays exponentially as $p \to -\infty$.
Finally, $\Lambda^0(\lambda)$ is monotonically increasing with $\lambda$ as long as $\Lambda^0<0$.
\end{remark*}

\subsection{Global bifurcation for approximate problems}\label{SS:global-approx}

Now the presentation is for $0<\epsilon <1$ 
the existence in the large of solutions to \eqref{E:approx} via global bifurcation theory.

Throughout the subsection, $\delta>0$ and $0<\epsilon<1$ are held fixed. 
Let $\Sde$ be the closure in $\mathbb{R}\times X$
of the set of all nontrivial solution pairs $(\lambda, w) \in\OO$ to \eqref{E:approx}, 
where $F^\epsilon$ is given in \eqref{D:F^epsilon} and $\OO$ is defined in \eqref{D:O_delta}.
Let $\C \subset \mathbb{R} \times X$ be the connected component of $\Sde$
containing the bifurcation point $(\el,0)$ determined in Lemma~\ref{L:eigen}.
The local curve of solutions $\mathcal{C}^\epsilon_{loc}$ constructed in Proposition~\ref{P:local} 
is contained in $\C$. 

Recall from Section \ref{S:degree} that for each $\delta>0$ and for each $\epsilon>0$, 
the generalized {\em degree} due to Healey and Simpson \cite{HeSi98} 
is successfully defined to $F^\epsilon$ in the set $\OO$. 
The following global bifurcation result is then immediate.

\begin{proposition}\label{T:global}
For every $\delta>0$ and for every $0<\epsilon< 1$, at least one of the following holds:
\renewcommand{\labelenumi}{{\rm(\roman{enumi})}}
\begin{enumerate}
\item $\C$ is unbounded in\/ $\mathbb{R} \times X$;
\item $\C$ contains another trivial solution pair\/ $(\lambda,0)$ with $\lambda \neq \el$;
\item $\C$ meets $\partial \OO$.
\end{enumerate}
\end{proposition}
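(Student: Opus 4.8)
The plan is to derive Proposition~\ref{T:global} directly from the abstract global bifurcation theorem for the Healey--Simpson degree applied to the operator $F^\epsilon$ on the open set $\OO$. First I would verify that all the hypotheses of that abstract theorem are in place: by the discussion in Section~\ref{S:degree}, for each fixed $\delta>0$ and $0<\epsilon<1$ the operator $F^\epsilon:\OO\to Y$ is twice continuously Fr\'echet differentiable, its Fr\'echet derivative $F^\epsilon_w(\lambda,w)$ is Fredholm of index zero (Lemma~\ref{L:fred}), the spectral and surjectivity conditions of Lemma~\ref{L:spectral} hold locally uniformly, and $F^\epsilon$ is proper on closed bounded subsets of $\OO$ (Lemma~\ref{L:proper}). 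These are exactly the conditions under which the generalized degree of Healey--Simpson is well defined and enjoys homotopy invariance on $\OO$; the line of trivial solutions $\{(\lambda,0):\lambda>-2\Gamma_{\inf}\}$ lies in $\OO$ near $\el$, and $(\el,0)$ is a simple bifurcation point by Lemma~\ref{L:eigen} and Proposition~\ref{P:local}.

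Next I would carry out the standard degree-jump argument. Fix a small ball $B_\rho$ around $(\el,0)$ in $\mathbb{R}\times X$ whose closure lies in $\OO$ and contains no other trivial solution and, for $\rho$ small, no nontrivial solution on its boundary sphere (this uses the local structure from Proposition~\ref{P:local}). Because the crossing number at $\el$ is odd---the relevant eigenvalue $\Lambda^\epsilon(\lambda)$ crosses $-(\pi/L)^2$ transversally and with multiplicity one, as recorded in Lemma~\ref{L:eigen}---the Healey--Simpson degree of $F^\epsilon(\lambda,\cdot)$ changes by an odd amount as $\lambda$ passes through $\el$. One then invokes the Rabinowitz-type alternative in the form adapted to this generalized degree: the connected component $\C$ of the closure $\Sde$ of nontrivial solutions in $\OO$ through $(\el,0)$ must either be noncompact in $\OO$, or return to the trivial line at a second point $(\lambda,0)$ with $\lambda\neq\el$. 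Noncompactness in $\OO$ means, since $F^\epsilon$ is proper on bounded closed subsets of $\OO$, that $\C$ is either unbounded in $\mathbb{R}\times X$ or has closure meeting $\partial\OO$. Splitting the first case accordingly yields exactly the three alternatives (i), (ii), (iii).

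The main point requiring care---and the step I expect to be the real obstacle---is the transition from ``noncompact component'' to the clean trichotomy, because $\OO$ is only an open subset of $\mathbb{R}\times X$, not the whole space, and the global bifurcation theorem is naturally phrased for operators defined on all of a Banach space. Here I would lean on the fact that the Healey--Simpson framework is explicitly built to handle degree on open sets with the properness hypothesis localized to bounded sets (Lemma~\ref{L:proper}), so that a component of nontrivial solutions in $\OO$ which does not reconnect to the trivial branch cannot be both bounded and bounded away from $\partial\OO$: otherwise its closure would be a compact subset of $\OO$ on which the degree-excision and homotopy properties force a contradiction with the nonzero crossing number. Once this localized version of the abstract alternative is quoted (it is precisely the content of \cite[Section~4]{HeSi98} combined with \cite{Rab71}), the proof is a direct citation, and I would present it as such: ``This is an immediate consequence of the generalized degree of \cite{HeSi98}, the crossing number computation in Lemma~\ref{L:eigen} and Proposition~\ref{P:local}, and the global bifurcation theorem of \cite{Rab71} in the form valid for the generalized degree; cf.\ \cite[Theorem~1.3]{CoSt04}.'' The remaining effort is bookkeeping: confirming $\mathcal{C}^\epsilon_{loc}\subset\C$ and that the degree hypotheses hold uniformly on a neighborhood of $\overline{\C}$ inside $\OO$, both of which follow from the lemmas already established.
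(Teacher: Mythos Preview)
Your proposal is correct and follows exactly the paper's approach: the paper simply states that the proof is almost identical to that of \cite[Theorem~1.3]{Rab71}, with the Healey--Simpson generalized degree of \cite{HeSi98} replacing the Leray--Schauder degree, and omits the details. Your write-up is precisely the natural elaboration of that citation, including the correct handling of the open set $\OO$ via properness.
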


The proof is almost identical to that of \cite[Theorem~1.3]{Rab71}
except that here the generalized degree defined in \cite{HeSi98} is used 
in place of the Leray-Schauder degree, and hence it is omitted.

The remainder of this subsection is devoted to refining the result of Proposition~\ref{T:global}
and to establishing some properties of $\C$ 
useful in characterizing $\C$ in terms of bounded subsets of $\OO$.

Demonstrated below is how exploitation of symmetry rules out 
the second alternative from Proposition~\ref{T:global}. 
In order to state the result precisely, let us denote the nodal set and its boundaries by
\begin{align*}
R^-&=\{(q,p) \in R: -L<q<0\}, & T^-&=\{ (q,0):  -L <q<0\}{,} \\
\partial R^-_l&=\{(-L,0): -\infty<p<0\},& \partial R^-_r&=\{(0,p):-\infty<p<0\}.
\end{align*}
First, upon examination of \eqref{E:w(s)}, the following nodal properties 
\begin{gather}
w_q>0 \quad \text{ in }R^- \cup T^-{,} \label{nodal1}\\
w_{qq}>0 \quad \text{on }\partial R^-_l,\qquad
w_{qq}<0 \quad \text{on }\partial R^-_r{,} \label{nodal2}\\
w_{qq}(-L,0)<0, \qquad w_{qq}(0,0)>0\label{nodal3}.
\end{gather}
hold along the local bifurcation curve $\mathcal{C}^\epsilon_\delta$.
The proof is based on properties of $\Phi^\epsilon \cos(\pi/L)q$ in Proposition \ref{P:local},
and it is detailed in \cite[Lemma C.1]{Hur06}.

Next, 
since $A^\epsilon(\lambda, w)$ is a uniformly elliptic second-order linear partial differential operator 
and since $B(\lambda,w)$ is uniformly oblique boundary operator, 
the maximum principle, the Hopf boundary lemma and the edge-point lemma applies 
to the boundary value problem 
\[(A^\epsilon(\lambda,w), B(\lambda,w))[w_q]=0\] in $R^-$.
Consequently, each nontrivial solution $w$ of \eqref{E:approx} with $(\lambda,w)\in\C \setminus (\el,0)$ 
possesses \eqref{nodal1}, \eqref{nodal2} and 
\begin{equation}\label{false-nodal} 
\text{either $w_{qq}(0,0)<0$ or $w_{qqp}(0,0)>0$, \, \,
either $w_{qq}(-L,0)>0$ or $w_{qqp}(-L,0)<0$},
\end{equation}
unless $\C$ contains a trivial solution $(\lambda,0)$ other than $(\lambda^\epsilon,0)$. 
The proof is in \cite[Lemma C.3]{Hur06}. 
On the other hand, by the variational characterization of $\lambda^\epsilon$, it follows that 
if $(\lambda,0) \in \C$ then $\lambda=\lambda^\epsilon$. 
See \cite[Lemma 4.8]{Hur06}. See also Lemma \ref{L:nodal}.
Therefore, the nodal properties \eqref{nodal1}, \eqref{nodal2} and \eqref{false-nodal}
preserve along the continuum $\C \setminus (\el, 0)$. 

Finally, our task is to prove \eqref{nodal3} assuming that \eqref{false-nodal} holds.
By evenness, $w_q(0,p)=w_{qqq}(0,p)=0$ for $-\infty<p\leq 0$ and $w_{qp}(0,p)=0$ for $-\infty<p\leq 0$. 
Moreover, from \eqref{nodal2} it follows $w_{qq}(0,0)\leq 0$. 
Differentiating the boundary condition $F_2(\lambda,w)=0$ twice in the $q$-variable results in 
\begin{multline*}
gw_{qq}(\lambda^{-1/2}+w_p)^2+4gw_q(\lambda^{-1/2}+w_p)w_{qq}+(2gw-\lambda)w_{pq}^2\\
+(2gw-\lambda)(\lambda^{-1/2}+w_p)w_{pqq}+w_{qq}^2+w_qw_{qqq}=0
\end{multline*}
on $T$. Evaluated at $(0,0)$, it reduces to 
\[gw_{qq}(\lambda^{-1/2}+w_p)^2+(2gw-\lambda)(\lambda^{-1/2}+w_p)w_{pqq}+w_{qq}^2=0.
\]
Suppose that $w_{qq}(0,0)=0$ so that \eqref{false-nodal} would dictate $w_{pqq}(0,0)>0$. 
Then, the above equation would reduce to 
\[(2gw-\lambda)(\lambda^{-1/2}+w_p)w_{pqq}=0 \qquad \text{at $(0,0)$.}
\] 
Since $2gw-\lambda$ and $\lambda^{-1/2}+w_p$ are nonzero, however, 
$w_{pqq}(0,0)=0$ would hold. This proves the assertion by contradiction. 

Let us define the open set\footnote{Note that
\eqref{nodal1}-\eqref{nodal3} define an open set in $X$, while 
\eqref{nodal1}, \eqref{nodal2} and \eqref{false-nodal} will not define an open set.
See \cite{CoSt07} for a more detailed discussion.}
\begin{equation}\label{D:N}
\mathcal{N}=\{\,w \in X: w \text{ satisfies \eqref{nodal1}-\eqref{nodal3}}  \}
\end{equation}
and summarize the results.

\begin{theorem}[Global bifurcation for approximate problems]\label{T:global1a}
For each $\delta>0$ and for each $0<\epsilon< 1$, the global continuum $\C$ 
either is unbounded in\/ $\mathbb{R} \times X$ or intersects $\partial \OO$. 
Each nontrivial solution lying on $\C$ has the nodal configuration \eqref{nodal1}-\eqref{nodal3},
i.e.,~$\C \setminus (\el,0) \subset \mathbb{R} \times \mathcal{N}$.
\end{theorem}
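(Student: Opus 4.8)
The plan is to deduce Theorem~\ref{T:global1a} from the three-way alternative of Proposition~\ref{T:global} by discarding its middle branch, and then to collect the nodal statements already established in this subsection. For fixed $\delta>0$ and $0<\epsilon<1$, Proposition~\ref{T:global} gives that $\C$ is unbounded in $\mathbb{R}\times X$, or contains a trivial pair $(\lambda,0)$ with $\lambda\neq\el$, or meets $\partial\OO$. To rule out the second branch I would invoke the variational characterization of $\el$ from Lemma~\ref{L:eigen}(i): $\el$ is the unique value of the parameter for which the linearization $F^\epsilon_w(\lambda,0)$ fails to be injective, equivalently for which \eqref{E:eigen} has a nontrivial solution in $X$, the monotonicity of $\Lambda^\epsilon(\lambda)$ on $\{\Lambda^\epsilon<0\}$ being the mechanism; cf.\ \cite[Lemma~4.8]{Hur06}. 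Hence no trivial pair $(\lambda,0)$ with $\lambda\neq\el$ can be a limit in $\OO$ of nontrivial solutions, so $(\lambda,0)\in\C$ forces $\lambda=\el$, and we are left with exactly the dichotomy asserted in the first sentence.

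For the inclusion $\C\setminus(\el,0)\subset\mathbb{R}\times\mathcal{N}$ I would argue as in the discussion preceding the theorem statement. Differentiating \eqref{E:approx} in $q$ makes $w_q$ a solution of the homogeneous problem $(A^\epsilon(\lambda,w),B(\lambda,w))[w_q]=0$ in $R^-$; since on $\OO$ the operator $A^\epsilon(\lambda,w)$ is uniformly elliptic and $B(\lambda,w)$ uniformly oblique, the strong maximum principle, the Hopf boundary lemma and the edge-point lemma produce \eqref{nodal1}, \eqref{nodal2} and the dichotomy \eqref{false-nodal} at every nontrivial solution on $\C$, using that $\C$ carries no trivial pair other than $(\el,0)$. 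By Proposition~\ref{P:local} and the expansion \eqref{E:w(s)} the full configuration \eqref{nodal1}-\eqref{nodal3} holds on $\mathcal{C}^\epsilon_{loc}\setminus(\el,0)$; since the inequalities in \eqref{nodal1}, \eqref{nodal2} and the dichotomy \eqref{false-nodal} pass to limits and are then restored to strictness by the strong maximum principle, the set of $(\lambda,w)\in\C\setminus(\el,0)$ on which they hold is relatively open and relatively closed, and, since every component of $\C\setminus(\el,0)$ accumulates at $(\el,0)$ where the local curve already supplies these properties, it is in fact all of $\C\setminus(\el,0)$. Finally, to upgrade the dichotomy \eqref{false-nodal} to the strict configuration \eqref{nodal3} I would run the computation carried out just above: using the evenness identities $w_q(0,\cdot)=w_{qp}(0,\cdot)=w_{qqq}(0,\cdot)=0$ and $q$-differentiating $F_2(\lambda,w)=0$ twice, evaluation at the corners $(0,0)$ and $(-L,0)$ shows that the degenerate branch of \eqref{false-nodal} at each corner would force the corresponding third-order mixed derivative $w_{pqq}$ to vanish there too, contradicting \eqref{false-nodal}; hence the strict signs of \eqref{nodal3} prevail and $w\in\mathcal{N}$ for every nontrivial $(\lambda,w)\in\C$.

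The step I expect to cause the most trouble is the propagation of the nodal configuration along $\C$. One must check that the admissible nodal set is relatively \emph{open} in $\C\setminus(\el,0)$ — this is precisely where the edge-point lemma enters, to prevent $w_q$, or its second $q$-derivative, from vanishing to higher order at a corner of $R^-$ — and relatively \emph{closed}, which rests on a limit of such solutions not degenerating to a trivial shear flow, i.e.\ on the exclusion of the second alternative from the first paragraph. The remaining ingredients are routine applications of the Schauder estimate \eqref{E:Schauder2} and the maximum principle already used in Lemma~\ref{L:fred} and Lemma~\ref{L:proper}.
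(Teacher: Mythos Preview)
Your proposal is correct and follows essentially the same approach as the paper: reduce Proposition~\ref{T:global} to the stated dichotomy by eliminating alternative~(ii) via the variational characterization of $\el$ (cf.\ \cite[Lemma~4.8]{Hur06}), and then propagate the nodal configuration \eqref{nodal1}--\eqref{nodal3} from the local curve by the open--closed argument built on the maximum principle, Hopf lemma, edge-point lemma, and the twice-differentiated boundary condition at the corners. One small caution on the ordering: the exclusion of a second trivial pair in \cite[Lemma~4.8]{Hur06} itself \emph{uses} the nodal property $w_q>0$ on $R^-$ of the approaching sequence (to force the limiting mode to be $k=1$), so the ruling-out of alternative~(ii) and the closedness of the nodal set are really the same step rather than independent ones---your third paragraph acknowledges this, but the first paragraph's phrasing that $\el$ is ``the unique value for which $F^\epsilon_w(\lambda,0)$ fails to be injective'' is slightly too strong without that qualification.
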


The following result describes global bifurcation in terms of bounded open sets in $\OO$,
and it is crucial in the next subsection in obtaining 
a global continnum of nontrivial solutions to \eqref{E:main}.  

\begin{theorem}[\mbox{see \cite[Theorem A6]{AmTo81}}]\label{T:A6}
For each $\delta>0$, let $S\subset \OO$ be a closed set with $(\lambda,0) \in S$ and let  
every bounded subset of $S$ be relatively compact in $\mathbb{R} \times X$.
Let $C$ be the maximal connected subset of $S$ containing $(\lambda,0)$.
Then $C$ either is unbounded in $\mathbb{R} \times X$ or meets $\partial \OO$
if and only if $\partial U \cap S \neq \emptyset$
for every bounded open set $U$ in $\OO$ with $(\lambda,0) \in U$.
\end{theorem}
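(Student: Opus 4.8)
\emph{Overview.} The plan is to prove the two implications of the biconditional separately, each by contraposition; the argument is entirely point-set topological and hinges on the fact that the hypothesis ``bounded subsets of $S$ are relatively compact'' forces $C$, whenever it is bounded and bounded away from $\partial\OO$, to be \emph{compact}.

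\emph{The easy implication} (``$C$ unbounded or meeting $\partial\OO$'' $\Rightarrow$ ``$\partial U\cap S\neq\emptyset$ for every admissible $U$'') I would establish by contraposition. Suppose there is a bounded open $U$ in $\OO$ with $(\lambda,0)\in U$ and $\partial U\cap S=\emptyset$. Since $S$ misses $\partial U$, one has the disjoint decomposition $S=(S\cap U)\cup(S\setminus\overline U)$ into two sets relatively open in $S$; as $C$ is connected and contains $(\lambda,0)\in S\cap U$, it follows that $C\subset S\cap U\subset U$. Hence $C$ is bounded and, taking $U$ with $\overline U\subset\OO$, also $\overline C\subset\OO$, so $C$ does not meet $\partial\OO$. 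Only connectedness of $C$ and $(\lambda,0)\in S$ enter here.

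\emph{The substantive implication} I would also argue by contraposition: assuming $C$ is bounded with $\overline C\cap\partial\OO=\emptyset$, I construct a bounded open $U$ in $\OO$ with $(\lambda,0)\in U$ and $\partial U\cap S=\emptyset$. Step one: $\overline C$ is a bounded closed subset of $S$, so by hypothesis it is compact; being connected, containing $(\lambda,0)$, and lying in $S$, it must equal $C$, so $C$ itself is compact. Step two: since $C$ is compact and $\OO$ is open, fix a bounded open $W$ with $C\subset W$ and $\overline W\subset\OO$, and set $M=\overline W\cap S$; this is again a bounded closed subset of $S$, hence compact, and $C$ is precisely the connected component of $(\lambda,0)$ in $M$ (any connected subset of $M$ through $(\lambda,0)$ is a connected subset of $S$ through $(\lambda,0)$, hence lies in $C$, while $C\subset W\cap S\subset M$). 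Step three: invoke the separation lemma for compact metric spaces---the component of a point equals the intersection of the relatively clopen sets containing it, cf.\ \cite{Why}---to produce a relatively clopen $A\subset M$ with $C\subset A\subset W$. Step four: $A$ is a compact set disjoint from the compact set $M\setminus A$ and contained in the open set $W$, so choosing $\epsilon>0$ smaller than both $\mathrm{dist}(A,M\setminus A)$ and $\mathrm{dist}\!\big(A,(\mathbb{R}\times X)\setminus W\big)$ and setting $U=\{z\in\mathbb{R}\times X:\mathrm{dist}(z,A)<\epsilon\}$ gives a bounded open set with $(\lambda,0)\in A\subset U$, $\overline U\subset W\subset\OO$, and $\overline U\cap S\subset\overline U\cap M\subset A\subset U$, whence $\partial U\cap S=\emptyset$.

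\emph{Expected obstacle.} Nothing here is computational; the two points that need care are (i) applying the relative-compactness hypothesis exactly where compactness of $\overline C=C$ and of $M$ is used---this is what makes the statement true in the infinite-dimensional space $\mathbb{R}\times X$, where bounded closed sets need not be compact---and (ii) the separation lemma of \cite{Why}, which must be applied to the auxiliary \emph{compact} set $M$ rather than to $S$ itself. Everything else is routine manipulation of $\epsilon$-neighborhoods. Since the statement is \cite[Theorem~A6]{AmTo81} verbatim, one may simply cite it; the above reconstructs its proof.
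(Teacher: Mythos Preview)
Your proposal is correct and matches the paper's approach: the paper does not spell out a proof but simply records that it is identical to \cite[Theorem~A6]{AmTo81} via Wyburn's separation lemma, with the only twist being that $S$ lives in $\OO$ so the alternative ``$C$ meets $\partial\OO$'' appears. Your reconstruction---compactness of $C$ from the relative-compactness hypothesis, passage to the auxiliary compact set $M=\overline W\cap S$, and Wyburn's lemma to split off a clopen $A$---is precisely that argument; the one unnecessary remark is ``taking $U$ with $\overline U\subset\OO$'' in the easy direction, since $C\subset U\subset\OO$ already gives $C\cap\partial\OO=\emptyset$.
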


Based on Wyburn's lemma \cite{Why} in topology,
the proof is identical to that of \cite[Theorem~A6]{AmTo81}, 
except that $S$ is confined in $\OO \subset \mathbb{R} \times X$ 
so that $C$ can be bounded by intersecting $\partial \OO$ 
if it is not unbounded in $\mathbb{R}\times X$. 

\begin{corollary}\label{C:AT}
For each $\delta>0$ and for $0<\epsilon<1$ sufficiently small,
if $U$ is a bounded open set in $\OO$ with $(\lambda^0,0) \in U$ and $\overline{U} \subset \OO$
then $\partial U \cap \C \neq \emptyset$.
\end{corollary}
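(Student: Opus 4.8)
The plan is to recognize Corollary \ref{C:AT} as the forward implication of Theorem \ref{T:A6} applied to $S=\Sde$, with one preliminary adjustment: $\C$ is by construction the connected component of $\Sde$ through $(\el,0)$, whereas the hypothesis places the point $(\lambda^0,0)$ — not $(\el,0)$ — in $U$. So the first step would be to reconcile the two base points. By Lemma \ref{L:eigen}(ii) one has $\lambda^\epsilon\to\lambda^0$ as $\epsilon\to0+$, and $U$ is an open set containing $(\lambda^0,0)$; hence there is $\epsilon_0=\epsilon_0(U)>0$ such that $(\el,0)\in U$ for all $0<\epsilon<\epsilon_0$. This is exactly the role of the clause ``$\epsilon$ sufficiently small'' in the statement. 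Fix such an $\epsilon$ from now on.

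Next I would verify the hypotheses of Theorem \ref{T:A6} with $S=\Sde$ and base point $(\el,0)$. By its very definition $\Sde$ is closed in $\mathbb{R}\times X$; it contains $(\el,0)$ because the local branch $\mathcal{C}^\epsilon_{loc}$ of Proposition \ref{P:local} consists of nontrivial solutions of \eqref{E:approx} lying in $\OO$ and accumulating at $(\el,0)$. Every point of $\Sde$ solves $F^\epsilon(\lambda,w)=0$ and lies in $\Ob$, so $\Sde\subset(F^\epsilon)^{-1}(\{0\})\cap\Ob$; the properness of $F^\epsilon$ on $\Ob$ from Lemma \ref{L:proper}, applied with the compact set $K=\{0\}$, then shows that every bounded subset of $\Sde$ is relatively compact in $\mathbb{R}\times X$. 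Thus $\C$ is precisely the maximal connected subset of $S=\Sde$ containing $(\el,0)$, as Theorem \ref{T:A6} requires.

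Then I would invoke Theorem \ref{T:global1a}: $\C$ is either unbounded in $\mathbb{R}\times X$ or meets $\partial\OO$. In the first case $\C$ is not contained in the bounded set $\overline U$; in the second case it is not contained in $\overline U$ either, since $\overline U\subset\OO$ is disjoint from $\partial\OO$. Because $\C$ is connected, meets the open set $U$ (it contains $(\el,0)$), and meets the complement of $\overline U$, the decomposition $\C=(\C\cap U)\sqcup(\C\cap\partial U)\sqcup(\C\setminus\overline U)$ cannot have an empty middle term without disconnecting $\C$; hence $\partial U\cap\C\neq\emptyset$. This last step is just the forward implication in Theorem \ref{T:A6}, and it uses none of the Wyburn-type machinery needed for the converse.

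The whole argument is essentially bookkeeping once Theorem \ref{T:global1a} and Theorem \ref{T:A6} are available, so the only point requiring genuine care — and the main obstacle in an honest write-up — is the mismatch between the named point $(\lambda^0,0)\in U$ and the actual bifurcation point $(\el,0)$ of $\C$, which is dispatched by the continuity statement $\lambda^\epsilon\to\lambda^0$ in Lemma \ref{L:eigen}. I would also note that the statement is vacuous unless $\delta$ is small enough that $(\lambda^0,0)\in\OO$, so that such a $U$ exists at all.
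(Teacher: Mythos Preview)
Your proposal is correct and follows essentially the same approach as the paper: apply Theorem~\ref{T:A6} to $S=\Sde$ and $C=\C$, using Lemma~\ref{L:proper} for relative compactness and Lemma~\ref{L:eigen}(ii) to ensure $(\el,0)\in U$ for small $\epsilon$. Your write-up is more explicit than the paper's two-line justification---in particular you spell out the connectedness argument for the forward implication and note the distinction between $\partial U\cap\C$ and $\partial U\cap\Sde$---but the underlying logic is identical.
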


The assertion follows once Theorem~\ref{T:A6} applies to the solution branch $\C$ of \eqref{E:approx}.
Indeed, the result of Lemma~\ref{L:proper} says that  
any bounded subset of $\Sde$ is relatively compact in $\mathbb{R} \times X$, 
and the result of Lemma \ref{L:eigen}(b) says that 
$(\el, 0) \in U$ for sufficiently small $\epsilon>0$.

\subsection{Global existence of rotational Stokes waves}\label{SS:global}
A global connected set in $\mathbb{R} \times X$ 
of nontrivial solutions to the singular problem \eqref{E:main} is constructed.

For each $\delta>0$ let 
\begin{equation}\label{D:S_delta} 
\begin{split}
\mathcal{S}_\delta=\{\, (\lambda, w)\in \OO : & F(\lambda, w)=0,\;\;w \not\equiv 0, \;\; w\in \mathcal{N}  \\ 
&w_q \in O(|p|^{-1-\rho})\text{ as $p \to -\infty$} \,\}\cup \{(\lambda^0, 0)\}
\end{split}\end{equation}
for some $\rho>0$. In other words, $\mathcal{S}_\delta$ consists of nontrivial solutions to \eqref{E:main} 
with the nodal properties \eqref{nodal1}-\eqref{nodal3} 
and with the decay condition that $w_q \in O(|p|^{-1-\rho})$ as $p \to -\infty$,
plus the bifurcation point $(\lambda^0,0)$. 
Let $\mathcal{C}'_\delta$ be the maximal connected component of 
the closure in $\mathbb{R} \times X$ of $\mathcal{S}_\delta$ containing $(\lambda^0,0)$. 
Our goal is to show that $\mathcal{C}'_\delta$ is a continnum 
of nontrivial solutions of \eqref{E:main} with the desired properties.

In order to apply Theorem \ref{T:A6} to $\mathcal{S}_\delta$ and $\mathcal{C}'_\delta$, 
we need to establish that $\mathcal{S}_\delta$ is closed and that 
every bounded subset of $\mathcal{S}_\delta$ is relatively compact in $\mathbb{R} \times X$. 
The relative compactness of $\mathcal{S}_\delta$ requires,
as we shall see in the proof of Lemma \ref{L:compact}, 
a certain uniform control at the infinite bottom of functions in a bounded subset of~$\mathcal{S}_\delta$.
The following result in the spirit of the Phragm\'en-Lindel\"of theorem furnishes 
a uniform decay as $p \to -\infty$ of solutions $w$ of \eqref{E:main} bounded under the norm of~$X$.

\begin{lemma}[Exponential decay]\label{L:exp-decay}
Let $\gamma \in C^{1+\alpha}([0,\infty))$, $\alpha \in (0,1)$ satisfy 
$\gamma(r) \in O(r^{-2-2\rho})$ as $r \to \infty$. 
For each $\delta>0$, if $(\lambda,w) \in \mathcal{S}_\delta$ and if $|\lambda|+\|w\|_{X} <M$
for some $M>0$, then $w_q$ enjoys an exponential decay property
\begin{equation}\label{E:exp-decay}
|w_q(q,p)| \leq M (2-e^{\beta q}) e^{\sigma p} \qquad \text{for all}\quad (q,p) \in R,
\end{equation}
where $\beta>\frac{KM^2}{2\delta^2}$ for some constant $K$ 
and the constant $\sigma>0$ sufficiently small is given in \eqref{E:sigma}.
\end{lemma}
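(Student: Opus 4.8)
The plan is to show that $w_q$ solves a linear, uniformly elliptic equation with no zeroth-order term, to dominate it on the half-strip $R^-$ by the explicit supersolution $\Psi(q,p)=M(2-e^{\beta q})e^{\sigma p}$, and then to pass to the full strip using oddness of $w_q$ in $q$. First I would record the equation for $w_q$: since the coefficients appearing in $F_1$ in \eqref{D:F_1} involve the independent variables only through $p$ (via $a(\lambda)=a(p;\lambda)$ and $\gamma(-p)$), differentiating the identity $F_1(\lambda,w)=0$ in $q$ and using equality of mixed partials gives
\[
A(\lambda,w)[w_q]=0\qquad\text{in }R,
\]
where $A(\lambda,w)$ is the operator of \eqref{D:A} with $\epsilon=0$, which carries no zeroth-order term. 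Since $(\lambda,w)\in\mathcal{S}_\delta\subset\mathcal{O}_\delta$ with $|\lambda|+\|w\|_X<M$, this operator is uniformly elliptic (the discriminant of its principal part is $\geq 4\delta^2$), its coefficient of $\varphi_{qq}$ satisfies $(a^{-1}(\lambda)+w_p)^2>\delta^2$, and every coefficient is bounded by a constant depending only on $M$, $\delta$, $\gamma$ — using $|w_p|,|w_q|,|w_{pq}|,|w_{pp}|\leq M$, $a(\lambda)>\delta^{1/2}$ (hence $a^{-1}(\lambda)<\delta^{-1/2}$, $a^{-3}(\lambda)<\delta^{-3/2}$) and $|\gamma(-p)|\leq\|\gamma\|_{C^0}$. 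I will not need a boundary condition for $w_q$ on $T$: there I use only $|w_q|\leq\|w\|_X\leq M$.

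Next I would construct the barrier. By \eqref{nodal1}, $w_q>0$ in $R^-$, while $w_q$ vanishes on the lateral sides $q=-L$ and $q=0$ (by evenness and $2L$-periodicity of $w$), and $w_q(q,p)\to0$ as $p\to-\infty$ uniformly in $q$ (as $w\in X$). On $\overline{R^-}$ one has $2-e^{\beta q}\in(1,2)$, so $\Psi>0$ and $\Psi$ is strictly concave in $q$ with $\Psi_{qq}=-M\beta^2 e^{\beta q}e^{\sigma p}<0$. Feeding $\Psi$ into $A(\lambda,w)$ (using $\Psi_p=\sigma\Psi$, $\Psi_{pp}=\sigma^2\Psi$, $\Psi_{pq}=\sigma\Psi_q$), the term $(a^{-1}(\lambda)+w_p)^2\Psi_{qq}\leq-\delta^2 M\beta^2 e^{\beta q}e^{\sigma p}$ is the only one carrying $\beta^2$; the terms carrying a single power of $\beta$ (the $\Psi_{pq}$-term and the $\varphi_q$-coefficient acting on $\Psi_q$) are each bounded by $CM^2\beta\,e^{\beta q}e^{\sigma p}$, and the $\beta$-free terms (the $\Psi_{pp}$-term and the $\varphi_p$-coefficient acting on $\Psi_p$) by $CM^2\sigma\,e^{\sigma p}$, with $C=C(\delta,\gamma)$. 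Choosing first $\beta>KM^2/(2\delta^2)$ with $K$ large enough makes the $\beta^2$-term dominate all the $O(\beta)$-terms, and then choosing $\sigma>0$ small enough (depending on $\beta$, $M$, $\delta$, $L$ — this is the constant of \eqref{E:sigma}), and using $e^{\beta q}\geq e^{-\beta L}$ on $R^-$, makes it also dominate the $O(\sigma)$-terms. Hence $A(\lambda,w)[\Psi]\leq0$ in $R^-$: $\Psi$ is a supersolution.

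Finally I would compare. On $T^-$, $\Psi(q,0)=M(2-e^{\beta q})\geq M\geq w_q(q,0)$ since $2-e^{\beta q}\geq1$ for $q\leq0$; on $q=-L$ and $q=0$, $w_q=0<\Psi$; and $w_q-\Psi\to0$ as $p\to-\infty$. The difference $u:=w_q-\Psi$ satisfies $A(\lambda,w)[u]\geq0$ in $R^-$, so the weak maximum principle (applicable as $A$ has no zeroth-order term) on each truncated rectangle $(-L,0)\times(-N,0)$ gives $u\leq\max\{0,\sup_{(-L,0)\times\{-N\}}u\}$; letting $N\to\infty$ yields $u\leq0$, i.e.\ $0<w_q\leq\Psi$ in $R^-$. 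The two-sided estimate \eqref{E:exp-decay} on all of $R$ then follows since $w_q$ is odd in the $q$-variable.

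The part I expect to be the main obstacle is the barrier computation: one must expand $A(\lambda,w)[\Psi]$ term by term from \eqref{D:A}, isolate the single $\beta^2$-contribution, and verify that its dominance forces precisely the threshold $\beta\geq KM^2/(2\delta^2)$ — with $\sigma$ necessarily chosen only afterwards, since the remaining $O(\sigma)$ terms have to be absorbed by a term that has already shrunk like $e^{-\beta L}$. Everything else (the equation for $w_q$, the boundary comparison, and the truncation argument, which is mild here because $w\in X$ already gives $w_q\to0$ at $-\infty$) is routine.
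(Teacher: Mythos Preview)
Your proposal is correct and follows the same barrier strategy as the paper: the identical comparison function $\Psi(q,p)=M(2-e^{\beta q})e^{\sigma p}$ on $R^-$, the same two-step choice of constants ($\beta$ large to kill the $O(\beta)$ terms, then $\sigma$ small to kill the $O(\sigma)$ terms against the residual $e^{-\beta L}$-damped negative term), and the same boundary comparison. The equation you write for $w_q$, namely $A(\lambda,w)[w_q]=0$, is exactly the equation the paper obtains after regrouping its first-order coefficients $b_1,b_2$; the two formulations are identical.

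Where you depart from the paper is in the closing step. The paper does \emph{not} truncate: it invokes a Phragm\'en--Lindel\"of theorem on the full half-strip, and for that it needs the first-order coefficients $b_1,b_2$ to decay like $O(|p|^{-1-\rho})$ as $p\to-\infty$---this is precisely where the hypotheses $\gamma(r)\in O(r^{-2-2\rho})$ and the built-in condition $w_q\in O(|p|^{-1-\rho})$ from the definition of $\mathcal{S}_\delta$ enter the paper's argument. Your truncation-plus-weak-maximum-principle route sidesteps this entirely: since $w\in X$ already forces $w_q(\cdot,-N)\to 0$ uniformly, and $\Psi>0$, the bottom contribution $\sup_q(w_q-\Psi)(q,-N)$ tends to a nonpositive limit, and the bounded-domain maximum principle (no zeroth-order term, bounded coefficients) does the rest. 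This is more elementary and, as written, does not actually use the decay hypothesis on $\gamma$ at all. One small caveat: for $q\in(0,L)$ oddness gives $|w_q(q,p)|\le M(2-e^{-\beta q})e^{\sigma p}$, not the literal bound \eqref{E:exp-decay} with $e^{\beta q}$; the paper's own ``analogous inequality on $R^+$'' has the same feature, and the operative content---exponential decay at the uniform rate $\sigma$---is unaffected.
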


\begin{proof}
Differentiating the equation $F_1(\lambda,w)=0$ in the $q$-variable yields that
\begin{align*}
0=&(1+w_q^2)w_{qpp}-2(a^{-1}(\lambda)+w_p)w_q w_{qpq}
+(a^{-1}(\lambda)+w_p)^2 w_{qqq} \\ 
&+\left(-2w_qw_{pq}+3\gamma\Pf(a^{-1}\A+w_p)^2\right) w_{qp}
+\left(2w_{q}w_{pp}-2\gamma\Pf a^{-3}\A w_q  \right)w_{qq} \\
=:&A^P(\lambda, w)[w_q]+b_1(\nabla w, \nabla^2 w)w_{qp} +b_2(\nabla w, \nabla^2 w)w_{qq}
\end{align*}
in $R$, where $A^P(\lambda, w)$ is the principal part of $A(\lambda,w)$, and
$b_1(\nabla w, \nabla^2 w)$ and $b_2(\nabla 2, \nabla^2 w)$ consist of 
quadratic polynomial expressions in $\nabla w$ and $\nabla^2 w$. 
Since $\gamma(-p) \in O(|-p|^{-2-2\rho})$ and $w_q \in O(|p|^{-1-\rho})$ as $p \to -\infty$, 
it follows that 
\[ b_1(\nabla w, \nabla^2 w), b_2(\nabla 2, \nabla^2 w) \in O(|p|^{-1-\rho})
\qquad \text{as $p \to -\infty$.}\] 
Since $(\lambda, w) \in \mathcal{O}_\delta$, the operator $A^P(\lambda, w)$ is uniformly elliptic. 
Moreover, $a^{-1}(\lambda)+w_p>\delta>0$ in $\overline{R}$. It is straightforward that 
\[|b_1(\nabla w, \nabla^2 w)|, |b_2(\nabla w, \nabla^2w)| \leq KM^2 \qquad \text{ for some $K>0$.}\]

Let $v=w_q$, and we consider the function in the nodal set $R^-=\{ (q,p) \in R: -L<q<0 \}$. 
It is immediate that $v$ solves in $R^-$ 
the following elliptic second-order partial differential equation 
\[L[v]:=A^P(\lambda, w)[v]+b_1(\nabla w, \nabla^2w)v_p+b_2(p,w_{pp})vv_q =0.\] 
By the nodal property that $w \in \mathcal{N}$, it follows that $v>0$ in $R^-$. 
By evenness of $w$, furthermore, $v(q,p)=0$ for $q=0$ or $q=-L$ for all $-\infty <p \leq0$. 

Let us define the auxiliary function 
\begin{equation}\label{E:exp-function}
u(q,p)=A(2-e^{\beta q}) e^{\sigma p}- v(q,p),
\end{equation}
in $R^-$, where positive constants $A$, and $\beta$, $\sigma$ 
will be determined in the course of the proof. 

It is straightforward that 
\begin{equation}\label{E:L[u]}
\begin{split} 
L[u] \leq Ae^{\sigma p} \Big(\sigma^2  
(1+w_q^2)(2-& e^{\beta q}) -2\sigma \sigma (a^{-1}(\lambda)+w_p)w_qe^{\beta q}  
-\beta^2 (a^{-1}(\lambda)+w_p)^2 e^{\beta q} \\
&+\sigma b_1(\nabla w, \nabla^2 w)(2-e^{\beta q})+ \beta b_2(\nabla w, \nabla^2w) e^{\beta q}\Big) 
-L[v] \\
\leq Ae^{\sigma p}\Big( 2\sigma^2(1+M^2)+& 2\beta \sigma KM^2+2\sigma KM^2
-\beta^2\delta^2e^{\beta q}+\beta KM^2e^{\beta q}\Big).
\end{split}
\end{equation}

Let $\beta=\max\left(1, \frac{KM^2}{2\delta^2}\right)$ so that 
\[-\beta^2\delta^2e^{\beta q}+ \beta KM^2e^{\beta q}=\beta e^{\beta q}(-\beta \delta^2+KM^2)
\leq -\frac12 e^{-\beta L} KM^2.\]
Subsequently, let us choose $\sigma>0$ small enough that
\begin{equation}\label{E:sigma}
2\sigma^2(1+M^2) -4\beta \sigma KM^2-\frac12 e^{-\beta L}KM^2<0. 
\end{equation}
Then, $L[u]< 0$ in $R^-$. 

Out task now is to examine $u$ on the boundaries of $R^-$. 
At the top boundary $\{(q,0): -L<q<0\}$, we have
\[ u(q,p)=A(2-e^{-\beta q})-v(q,p) \geq A -M= 0\]
if $A=M$. 
Since $v(q,p)=0$ on the side boundaries $\{ (q,p): q=0 \text{ or }q=-L, \, p_0 <p<0\}$, it follows that
\[ u(q,p) = M(2-e^{-\beta q}) e^{\sigma p} > 0 \qquad \text{for}\quad q=0\text{ or }q=-L, \, -\infty <p<0.\]
In summary, $L[u]<0$ in the domain $R^-$ and $u \geq 0$ on the boundaries of $R^-$.
Since $A^P(\lambda,w)$ is uniformly elliptic, and 
$b_1(\nabla w, \nabla^2 w), b_2(\nabla w, \nabla^2 w) \in O(|p|^{-1-\rho})$ 
as $p\to -\infty$, by the Phragm\'en-Lindel\"of theorem \cite{Gil}, it follows that $u\geq 0$ in $R^-$. 
That is, 
\[0\leq v(q,p) \leq M(2-e^{\beta q}) e^{\sigma p} \quad \text{in }R^-.\]
Repeating the above argument for $-v$ on $R^+$ yields an analogous inequality. 
This completes the proof.
\end{proof}

The above proof applies to $F_1^\epsilon(\lambda, w)=F_1(\lambda,w)-\epsilon w=0$, $\epsilon>0$,
mutatis mutandis to yields an exponential decay of solutions to \eqref{E:approx},
analogous to \eqref{E:exp-decay}. 

\begin{corollary}\label{C:exp-decay}
For each $\delta>0$ and for each $0<\epsilon <1$ 
if $(\lambda,w) \in \mathcal{C}^\epsilon_\delta$ and if $|\lambda|+\|w\|_{X} <M$
for some $M>0$, then $w_q$ enjoys an exponential decay property  
\begin{equation}\label{E:exp-decay'}
|w_q(q,p)| \leq M (2-e^{\beta q}) e^{\sigma p} \qquad \text{for all}\quad (q,p) \in R 
\end{equation}
where $\beta$ and $\sigma$ are as in Lemma \ref{L:exp-decay}.
\end{corollary}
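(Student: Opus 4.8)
The plan is to mimic, \emph{mutatis mutandis}, the argument of Lemma~\ref{L:exp-decay}, with the only change being the presence of the extra lower-order term $-\epsilon w$ in $F_1^\epsilon$. First I would differentiate the equation $F_1^\epsilon(\lambda,w)=F_1(\lambda,w)-\epsilon w=0$ in the $q$-variable. Since $\epsilon$ is a constant, this produces exactly the same identity as in Lemma~\ref{L:exp-decay} except for one additional term $-\epsilon w_q$; that is, writing $v=w_q$, we obtain
\[
A^P(\lambda,w)[v]+b_1(\nabla w,\nabla^2 w)v_p+b_2(\nabla w,\nabla^2 w)v_q-\epsilon v=0
\qquad\text{in }R,
\]
with the same $b_1,b_2$ as before, so that again $|b_1|,|b_2|\le KM^2$ and $b_1,b_2\in O(|p|^{-1-\rho})$ as $p\to-\infty$, using $\gamma(-p)\in O(|p|^{-2-2\rho})$ and the decay $w_q\in O(|p|^{-1-\rho})$ (which holds on $\mathcal{C}^\epsilon_\delta$ by its definition through $\mathcal{N}$, or can be obtained directly). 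Crucially, the definition of $\mathcal{C}^\epsilon_\delta$ forces $w\in\mathcal{N}$, hence $v=w_q>0$ in $R^-$ and $v=0$ on the side boundaries $q=0$ and $q=-L$ by evenness.

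Next I would introduce the same barrier function $u(q,p)=A(2-e^{\beta q})e^{\sigma p}-v(q,p)$ on $R^-$, with $A=M$, and compute $L^\epsilon[u]$ where $L^\epsilon$ denotes the full operator above (including $-\epsilon$). The estimate for $L[u]$ in \eqref{E:L[u]} is unchanged except for the extra contribution $-\epsilon u$. On the region where $u\ge 0$ this term is $\le 0$ and hence only helps: the inequality $L^\epsilon[u]\le L[u]-\epsilon u\le L[u]$ holds wherever $u\ge0$, and the choices $\beta=\max(1,KM^2/(2\delta^2))$ and $\sigma>0$ satisfying \eqref{E:sigma} make $L[u]<0$ exactly as before, so $L^\epsilon[u]<0$ at every interior point where $u\ge 0$. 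The boundary analysis is verbatim: $u\ge A-M=0$ on the top $T^-$ and $u>0$ on the sides, so $u\ge 0$ on $\partial R^-$.

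Then I would invoke the Phragm\'en--Lindel\"of theorem \cite{Gil} for the uniformly elliptic operator $A^P(\lambda,w)$ with lower-order coefficients $b_1,b_2\in O(|p|^{-1-\rho})$ and the nonpositive zeroth-order term $-\epsilon$. The presence of $-\epsilon<0$ as a zeroth-order coefficient is precisely the ``good sign'' for a maximum principle and does not disturb the Phragm\'en--Lindel\"of conclusion; if one prefers to quote the theorem in the form used for Lemma~\ref{L:exp-decay} (zeroth-order coefficient zero), one can instead absorb $-\epsilon v$ into $b_2 v_q$ trivially is not possible, so the cleanest route is simply to note that the comparison function $u$ satisfies $L^\epsilon[u]<0$ on $\{u\ge 0\}$, $u\ge 0$ on $\partial R^-$, and $u$ grows no faster than $e^{\sigma p}\to 0$ as $p\to-\infty$, whence $u\ge 0$ throughout $R^-$ by the Phragm\'en--Lindel\"of argument. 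This yields $0\le v(q,p)\le M(2-e^{\beta q})e^{\sigma p}$ in $R^-$; applying the same reasoning to $-v$ on $R^+$ gives the matching bound there, and together they establish \eqref{E:exp-decay'} with the same $\beta,\sigma$.

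The only point requiring a moment's care---and hence the ``main obstacle,'' such as it is---is checking that adding $-\epsilon w$ to $F_1$ produces only the harmless term $-\epsilon w_q$ upon $q$-differentiation and that this term carries the favorable sign for the barrier argument; once that is observed, the proof is genuinely a line-by-line transcription of Lemma~\ref{L:exp-decay}, which is why it is stated here as a corollary rather than a separate lemma. I would therefore present it as: ``The proof of Lemma~\ref{L:exp-decay} applies \emph{mutatis mutandis}; the extra term $-\epsilon w_q$ obtained by differentiating $-\epsilon w$ in $q$ has the sign of a nonpositive zeroth-order coefficient and so only strengthens the inequality $L^\epsilon[u]<0$ on $\{u\ge0\}$, leaving the barrier construction and the Phragm\'en--Lindel\"of step intact.''
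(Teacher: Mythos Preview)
Your proposal is correct and follows exactly the route the paper indicates: the paper's entire ``proof'' is the sentence preceding the corollary, namely that the argument of Lemma~\ref{L:exp-decay} applies \emph{mutatis mutandis} to $F_1^\epsilon(\lambda,w)=F_1(\lambda,w)-\epsilon w=0$. Your elaboration that differentiating the extra term $-\epsilon w$ in $q$ produces only $-\epsilon w_q$, a zeroth-order coefficient of the favorable sign, is precisely the point; in fact the computation is even cleaner than you state, since $L^\epsilon[u]=L[\text{barrier}]-\epsilon\cdot(\text{barrier})$ with the barrier $M(2-e^{\beta q})e^{\sigma p}>0$ throughout $R^-$, so $L^\epsilon[u]<0$ holds everywhere in $R^-$, not merely on $\{u\ge0\}$.
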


The exponential decay of solutions of \eqref{E:main} in \eqref{E:exp-decay} 
establishes the relative compactness of $\mathcal{S}_\delta$ in $\mathbb{R} \times X$. 

\begin{lemma}[Relative compactness]\label{L:compact}
Suppose that $\gamma \in C^{1+\alpha}[(0,\infty))$, $\alpha \in (0,1)$,
and $\gamma(r) \in O(r^{-2-2\rho})$ as $r  \to \infty$ for some $\rho>0$. 
For each $\delta>0$, any bounded subset of $\Ss$ is relatively compact in\/ $\mathbb{R} \times X$.
\end{lemma}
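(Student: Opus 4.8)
The plan is to take a bounded sequence $\{(\lambda_j, w_j)\} \subset \Ss$ and extract a subsequence converging in $\mathbb{R} \times X$, using the exponential decay from Lemma~\ref{L:exp-decay} to handle the noncompactness coming from the unbounded direction $p \to -\infty$. First I would use boundedness of $\{\lambda_j\}$ in $\mathbb{R}$ and of $\{w_j\}$ in $X = C^{3+\alpha}_{per}(\overline{R})$ together with the Arzel\`a--Ascoli theorem on bounded subregions to pass (after relabeling) to a limit $\lambda_j \to \lambda$ and $w_j \to w$ in $C^3_{per}(\overline{R'})$ for every bounded $R' \subset \overline{R}$; by continuity $F(\lambda, w) = 0$, the nodal inequalities \eqref{nodal1}--\eqref{nodal3} pass to the limit (they are closed conditions at the level of $C^3$ on compact pieces, combined with the strong maximum principle / Hopf lemma argument preceding Theorem~\ref{T:global1a} to rule out degeneracy), and the $O(|p|^{-1-\rho})$ decay of $w_q$ is inherited, so that $(\lambda, w) \in \overline{\Ss}$; one still has to check $(\lambda, w) \in \OO$, which follows because $\OO$ is defined by the non-strict-in-the-limit inequalities $\lambda \geq -2\Gamma_{\inf} + \delta$, $a^{-1}\A + w_p \geq \delta$, $w \leq (2\lambda - \delta)/(4g)$ — here I would actually want the bounded subset $D$ to have closure inside $\OO$, i.e. work with $D \subset \Ob$ exactly as in Lemma~\ref{L:proper}, so the limiting point stays in $\OO$.

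The crux is upgrading $w_j \to w$ from local $C^3$ convergence to convergence in $X = C^{3+\alpha}(\overline{R})$, i.e. uniformly up to $p = -\infty$. For this I would follow the template of Lemma~\ref{L:proper}: first show $w_j \to w$ in $C^0_{per}(\overline{R})$, then invoke the interpolation inequality \eqref{E:interp} to get convergence in $C^{k'+\alpha'}$ for $k'+\alpha' < 3+\alpha$, and finally close the argument with the Schauder estimate \eqref{E:Schauder4} applied to the principal-part operators $(A^P(\lambda_j, w_j), B^P(\lambda_j, w_j))$, exactly as in the last step of Lemma~\ref{L:proper}; the extra ingredient here, absent in Lemma~\ref{L:proper}, is that we are not given convergence of the right-hand sides $F(\lambda_j, w_j)$ to a prescribed compact set — instead $F(\lambda_j, w_j) = 0 = F(\lambda, w)$ identically, which only \emph{helps}. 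So the genuine work is the $C^0$ convergence step.

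For the $C^0$ step I would argue by contradiction as in Lemma~\ref{L:proper}: if $w_j \not\to w$ uniformly there is $\{(q_j, p_j)\}$ with $p_j \to -\infty$ and $|w_j(q_j,p_j) - w(q_j,p_j)| \geq \kappa > 0$. But now the exponential decay \eqref{E:exp-decay} bounds $|\partial_q w_j|$, hence (integrating in $q$ from the boundary $q = -L$ where $w_j$ is even, together with the analogous bound for $w$) bounds $|w_j(q,p) - w_j(-L,p)|$ and $|w(q,p) - w(-L,p)|$ by $CM e^{\sigma p}$ uniformly in $j$; a parallel decay estimate for $w_j$ itself (either built into the definition of $\Ss$ through $w \to 0$ as $p \to -\infty$, or derived the same way from the equation for $w$ rather than $w_q$, using that $w_j(-L, \cdot) \to 0$) forces $|w_j(q_j,p_j)|, |w(q_j,p_j)| \to 0$, contradicting $\kappa > 0$. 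Alternatively, and perhaps more cleanly, I would run the ``shifted difference'' $v_j(q,p) = w_j(q, p+p_j) - w(q,p+p_j)$ argument verbatim from Lemma~\ref{L:proper}: $v_j$ converges to a solution $v_0$ of the limiting equation $(v_0)_{pp} + (\lambda + 2\Gamma_\infty)^{-1}(v_0)_{qq} = 0$ (note $\epsilon = 0$ here, so the decoupled zeroth-order term drops), but the exponential decay \eqref{E:exp-decay} plus the uniform $X$-bound forces $v_0$ to vanish at $p = \pm\infty$, and then the energy identity \eqref{E:energy} with $\epsilon = 0$ — legitimate precisely because \eqref{E:exp-decay} makes the boundary terms at $p = \pm\infty$ vanish — gives $\nabla v_0 \equiv 0$, hence $v_0 \equiv 0$, contradicting $|v_0(0,0)| \geq \kappa$.

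The main obstacle is exactly this last point: when $\epsilon = 0$ the limiting operator $\partial_p^2 + (\lambda + 2\Gamma_\infty)^{-1}\partial_q^2$ has nontrivial bounded solutions on the bi-infinite strip $R_0$, so the energy/integration-by-parts argument is not available for free as it was in Lemma~\ref{L:fred} — it works \emph{only} because Lemma~\ref{L:exp-decay} supplies the decay that kills the boundary contributions and rules out those bounded-but-nondecaying modes. Thus the proof is essentially: ``reduce to Lemma~\ref{L:proper}'s argument, and plug in Lemma~\ref{L:exp-decay} at the one spot where Lemma~\ref{L:proper} used the $\epsilon > 0$ term.'' Everything else — the local $C^3$ extraction, the interpolation, the final Schauder bootstrap, the verification that the limit lies in $\Ss$ — is routine and can be cited from the proofs of Lemma~\ref{L:proper} and the nodal-set discussion above.
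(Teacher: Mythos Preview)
Your overall architecture is right, and you have correctly located the single nontrivial step: upgrading local $C^3$ convergence to $C^0_{per}(\overline{R})$ convergence, after which interpolation and the Schauder bootstrap from Lemma~\ref{L:proper} close the argument verbatim. The gap is in how you propose to carry out that $C^0$ step.

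Your preferred route (b), the shifted-difference argument, does not go through at $\epsilon=0$. The estimate \eqref{E:exp-decay} controls only $w_q$, not $w$; translated to the shifted functions it gives $\partial_q v_j\to 0$ uniformly, hence $\partial_q v_0\equiv 0$, so $v_0=v_0(p)$. The limiting equation then reads $(v_0)_{pp}=0$, and boundedness forces $v_0\equiv b$ for some constant. Nothing you have written rules out $b\neq 0$: the energy identity \eqref{E:energy} with $\epsilon=0$ yields only $\nabla v_0=0$, not $v_0=0$, and your claim that ``\eqref{E:exp-decay} forces $v_0$ to vanish at $p=\pm\infty$'' is exactly the uniform decay of $w_j$ (not $\partial_q w_j$) that you are trying to prove. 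So (b) is circular.

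Route (a) is the right direction but is left vague precisely at the point that matters. Writing $w_j(q,p)=\int_0^q \partial_q w_j\,dq' + w_j(0,p)$, the integral term is handled by \eqref{E:exp-decay}; what is missing is a \emph{uniform} decay of the one-variable functions $w_j(0,\cdot)$. This is neither ``built into $\Ss$'' (the $o(1)$ decay in $X$ is pointwise in $j$, not uniform) nor obtainable by ``running Phragm\'en--Lindel\"of on $w$'' (the equation $F_1(\lambda,w)=0$ is quasilinear and inhomogeneous in $w$, with $\gamma$-terms that do not vanish). The paper supplies the missing idea: first, the classical interior gradient estimate applied to the exponentially decaying $\partial_q w_j$ gives $|\partial_q^2 w_j(0,p)|\leq C' e^{\sigma p}$ uniformly in $j$; second, restricting $F_1(\lambda_j,w_j)=0$ to the symmetry line $q=0$ (where evenness kills all odd $q$-derivatives) produces an ordinary differential relation for $w_j(0,\cdot)$ in which every forcing term --- the $\partial_q^2 w_j(0,p)$ term and the $\gamma(-p)$ terms --- decays uniformly in $j$ (the latter because $\gamma(r)\in O(r^{-2-2\rho})$). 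Integrating this ODE gives the required uniform decay of $w_j(0,p)$, and then Ascoli finishes the $C^0$ step.
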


The nonlinear operator $F$ in $\OO \subset \mathbb{R} \times X$ is not (locally) proper,
as is discussed in Section \ref{SS:methods}. Indeed, its limiting problem 
$$v_{pp}+(\lambda+2\Gamma_\infty)^{-1}v_{qq}=0$$
in the infinite strip $\{(q,p): -L<q<L,\, -\infty<p<\infty\}$ admits infinitely many solutions in the $C^0$ class. 
Here, compactness is established only for the solution set of $F$, not for the operator itself.

\begin{proof}
Let $\{(\lambda_j,w_j)\} \subset \Ss$ be a sequence in $\mathbb{R} \times X$ with 
$|\lambda_j|+\|w_j\|_X < M$ for all $j$ for some $M>0$. Note that, $F(\lambda_j, w_j)=0$ for all $j$.
It is immediate that $\{\lambda_j\}$ has a convergent subsequence in $\mathbb{R}$.
By possibly relabeling the index, let $\lambda_j \to \lambda$ as $j \to \infty$.
Moreover, it is immediate that $w_j \to w$ as $j \to \infty$ for some $w$ 
in $\overline{R'}$ for any bounded subset $R'$ of $R$. By continuity, $F(\lambda, w)=0$. 
Our aim is to show that $\{w_j\}$ has a subsequence converging to $w$ as $j \to \infty$ in~$X$.

As is indicated in the proofs of Lemma \ref{L:fred} and Lemma~\ref{L:proper},
a crucial step in showing the convergence of $\{w_j\}$ in $X$ is 
to obtain the convergence of $\{w_j\}$ in the $C^0_{per}(\overline{R})$ norm.
Since $\{w_j\}$ converges in $C^0_{per}(\overline{R'})$ for any $R' \subset R$ bounded,
it entails to show that $\{w_j\}$ decays as $p \to -\infty$ uniformly for $j$. 

It is convenient to write  
\begin{equation}\label{E:w-decomp}
w_j(q,p) = \int^q_0 \partial_q w_j(q',p) dq' + w_j(0,p).
\end{equation}
Note that $\{\partial_q w_j \}$ and  $\{w_j(0,\cdot )\}$ are bounded in 
$C^{2+\alpha}_{per}(\overline{R})$ and $C^{3+\alpha}_{per} ((-\infty,0])$, respectively.

Since $\gamma(-p)\in O(|p|^{-2-2\rho})$ and $\partial_q w_j \in O(|p|^{-1-\rho})$ as $p \to -\infty$ 
for some $\rho$, and  since $|\lambda_j|+ \|w_j\|_X \leq M$ for all $j$, 
the result of Lemma \ref{L:exp-decay} states that $\partial_q w_j$ decays exponentially as $p \to -\infty$ 
uniformly for $j$. More precisely,  
\[ | \partial_q w_j(q,p)| \leq Ce^{\sigma p} \qquad \text{ for all $(q,p) \in R$}\]
where $C>0$ and $\sigma>0$ depends only on $\delta$ and $M$. 
An argument of Ascoli type then applies to assert that 
the first term in \eqref{E:w-decomp} has a subsequence converging in $C^0_{per}(\overline{R})$. 
Moreover, by the dominated convergence theorem, 
\[ \int^q_0 \partial_q w_j(q'p) dq' \to \int^q_0 \partial_q w (q',p) dq' \qquad \text{as $j \to \infty$.}\]

Next is to examine the latter term in \eqref{E:w-decomp}. 
Since $\partial_qw_j$ decays exponentially like \eqref{E:exp-decay} as $p \to -\infty$ uniformly for $j$, 
it follows from the classical gradient estimate for elliptic equations \cite[p.37]{GiTr01} that 
\begin{equation}\label{E:w-decay2}
|\partial^2_q w_j(0,p) | \leq C' e^{\sigma p} \qquad \text{for all $p \in (-\infty,0)$}
\end{equation}
where $C'>0$ is independent of the index $j$, and $\sigma$ is the same as in \eqref{E:exp-decay}.
When restricted on the half-line $q=0$, by evenness of $w_j$, 
the equation $F_1(\lambda_j,w_j)=0$ reduces to
\begin{equation*}\label{E:q=0}
\partial^2_p w_j +(a^{-1}\A +\partial_p w_j)\partial^2_q w_j 
+\gamma\Pf(a^{-1}\A +\partial_p w_j)^3-\gamma\Pf a^{-3}\A =0.
\end{equation*}
Since $\gamma \in O(r^{-2-2\rho})$ as $r \to \infty$ for some $\rho>0$,
and $\partial_q^2 w_j(0,p)$ decays exponentially like \eqref{E:w-decay2} as $p \to -\infty$ 
uniformly for $j$, then it follows that $w_j(0,p)$ decays as $p \to -\infty$ uniformly for~$j$.  
Again, an argument of Ascoli type asserts that $\{w_j(0,p)\}$
has a subsequence which converge in $C^0((-\infty,0])$. 
In turn, $\{w_j\}$, possibly after relabeling, converges to $w$ in $C^0(\overline{R})$.

The remainder of the proof is nearly identical to that of Lemma~\ref{L:proper}, 
and we only outline the various stages of the proof.

As is done in \eqref{E:w-decomp1} in the proof of Lemma~\ref{L:proper}, 
we decompose $F$ as
\begin{align*}
F_1(\lambda,w) &= A^P(\lambda,w)[w]+f_1(\lambda,w){,} \\
F_2(\lambda,w) &= B^P(\lambda,w)[w]+f_2(\lambda,w),
\end{align*}
where $A^P(\lambda,w)$, $B^P(\lambda,w)$, $f_1(\lambda,w)$ and $f_2(\lambda, w)$ 
are the same as in the proof of Lemma~\ref{L:proper}. We may write 
\begin{align*}
A^P(\lambda_j,w_j)[w_j-w]=-(A^P(\lambda_j,w_j)&-A^P(\lambda,w))[w]
-(f_1(\lambda_j,w_j)-f_1(\lambda,w)),\\
B^P(\lambda_j,w_j)[w_j-w]=-(B^P(\lambda_j,w_j)&-B^P(\lambda,w))[w]
-(f_2(\lambda_j,w_j)-f_2(\lambda,w)).
\end{align*}

Since $A^P(\lambda,w)$ is uniformly elliptic and $B^P(\lambda, w)$ is uniformly oblique, 
an estimate of Schauder type~\cite{ADN59} states that 
\begin{equation}\label{E:Schauder10}
\| w_j-w \|_X \leq C(\|A^P(\lambda_j,w_j)[w_j-w]\|_{Y_1} +
\|B^P(\lambda_j,w_j)[w_j-w]\|_{Y_2} +\| w_j-w\|_Z)
\end{equation}
holds, where $C>0$ is independent of $j$.
Since $\{ w_j\}$ is bounded in $X$ and it converges in $C^0_{per}(\overline{R})$ 
an interpolation inequality (see \eqref{E:interp} in Lemma~\ref{L:proper} or \cite[Theorem 3.2.1]{Kry96}) 
asserts that $w_j \to w$ in $C^3_{per}(\overline{R})$. Accordingly,
\[
\| A^P(\lambda_j,w_j)[w_j-w]\|_{Y_1} \to 0 \qquad \text{as $j \to \infty$}.
\]
Moreover, by the standard Schauder theory and the embedding properties in the bounded domain,
\[ \|B^P(\lambda_j,w_j)[w_j-w]\|_{Y_2} \to 0 \qquad \text{as $j \to \infty$}. \]
The Schauder estimate \eqref{E:Schauder10} then dictates that 
$w_j \to w$ in $C^{3+\alpha}(\overline{R})$. 
Furthermore, by continuity, $w \in X$ and $F(\lambda,w)=0$. 

Finally, since $\{\partial_q w_j\}$ decays exponentially as $p \to -\infty$, 
it follows that $w_q \in O(|p|^{-1-\rho})$ as $p \to -\infty$. 
By Lemma \ref{L:exp-decay}, in fact, $w_q$ decays exponentially as $p \to -\infty$.
This completes the proof.
\end{proof}

Next is to show that $\mathcal{S}_\delta$ is closed. If $\{(\lambda_j, w)\}$ in $\mathcal{S}_\delta$
converges to $(\lambda,w)$ as $j \to \infty$ and if $w$ is not identically zero, 
then, by continuity, $(\lambda,w)$ is a nontrivial solution of \eqref{E:main}. 
Moreover, by Lemma \ref{L:exp-decay} and Lemma \ref{L:compact},
$w_q$ decays exponentially as $p \to -\infty$. That means, $(\lambda, w) \in \mathcal{S}_\delta$.
Thus, it remains to show that if $\{(\lambda_j,w_j)\}$ converges to $(\lambda, 0)$ as $j \to \infty$
then $\lambda=\lambda^0$.
For $0<\epsilon<1$, the analogous property ensures that 
the nodal properties preserve along the global continnum 
$\Cd \setminus (\lambda^\epsilon,0)$ of solutions to \eqref{E:approx}.

\begin{lemma}[Closedness]\label{L:nodal}
For each $\delta>0$, if $(\lambda,0) \in \mathcal{S}_\delta$ then $\lambda=\lambda^0$. 
\end{lemma}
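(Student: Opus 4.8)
The statement is to be read as: if $(\lambda_j,w_j)$ is a sequence of points of $\mathcal{S}_\delta$ converging in $\mathbb{R}\times X$ to $(\lambda,0)$, then $\lambda=\lambda^0$. Since the only trivial point of $\mathcal{S}_\delta$ is $(\lambda^0,0)$, we may assume $w_j\not\equiv 0$, $w_j\in\mathcal{N}$, $(w_j)_q\in O(|p|^{-1-\rho})$ as $p\to-\infty$, and $t_j:=\|w_j\|_X\to 0$, $\lambda_j\to\lambda$; the inclusion $(\lambda_j,w_j)\in\mathcal{O}_\delta$ forces $\lambda\ge-2\Gamma_{\inf}+\delta>0$. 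The plan is the standard singular--bifurcation argument, as in \cite[Lemma 4.8]{Hur06}: normalize, pass to the limit to produce a nontrivial element of $\ker F_w(\lambda,0)$ carrying the inherited nodal sign, and then invoke the variational characterization of $\lambda^0$.

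First I would normalize. Since $F$ is twice continuously Fr\'echet differentiable and $F(\lambda_j,0)=0$, Taylor's theorem gives $0=F(\lambda_j,w_j)=F_w(\lambda_j,0)[w_j]+r_j$ with $\|r_j\|_Y\le Ct_j^2$ on the bounded set at hand; hence, with $v_j:=w_j/t_j$, so that $\|v_j\|_X=1$, we get $F_w(\lambda_j,0)[v_j]=-r_j/t_j\to 0$ in $Y$. The nodal property \eqref{nodal1} is homogeneous of degree one, so $(v_j)_q>0$ in $R^-$.

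Next I would pass to the limit. The delicate point is that $F_w(\lambda,0)$ is not Fredholm, so $X$-compactness of $\{v_j\}$ is not automatic; it is handled exactly as in Lemma~\ref{L:compact}, once a uniform decay of $(v_j)_q$ as $p\to-\infty$ is in hand. This decay follows by running the proof of Lemma~\ref{L:exp-decay} with the constant $A$ in \eqref{E:exp-function} taken to be $\sup_T|(w_j)_q|\le t_j$: the barrier inequality $L[u]<0$ is unaffected by the value of $A$, and the admissible $\beta,\sigma$ in \eqref{E:sigma} depend only on $\delta$, $L$ and an upper bound for $|\lambda_j|+\|w_j\|_X$, which is uniform for large $j$. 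One thus gets $|(v_j)_q(q,p)|\le(2-e^{\beta q})e^{\sigma p}\le 2e^{\sigma p}$ in $R$, uniformly in $j$. Writing $v_j(q,p)=\int_0^q(v_j)_q(q',p)\,dq'+v_j(0,p)$ and controlling $v_j(0,p)$ as $p\to-\infty$ via the gradient estimate for $\partial_q^2v_j(0,p)$ together with the $q=0$ trace (using evenness) of the relation $F_w(\lambda_j,0)[v_j]=o(1)$ — precisely as in the proof of Lemma~\ref{L:compact} — I obtain that $\{v_j\}$ decays uniformly in $j$, hence converges in $C^0_{per}(\overline R)$ along a subsequence; the Schauder estimate and an interpolation inequality then upgrade this to convergence in $X$ to some $v$ with $\|v\|_X=1$ and $v_q\ge 0$ in $R^-$. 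Letting $j\to\infty$ in $F_w(\lambda_j,0)[v_j]\to 0$ (the coefficients $a(\cdot;\lambda_j)$, $\lambda_j^{-1/2}$ converging since $\lambda_j\to\lambda>0$) shows that $v$ is a nontrivial solution of \eqref{E:eigen} with $\epsilon=0$.

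Finally I would identify $\lambda$. Expand $v=\sum_{k\ge 0}\Phi_k(p)\cos(k\pi q/L)$; each $\Phi_k$ decays at $-\infty$ and solves the $k$-th ordinary differential equation, and, as in the proof of Lemma~\ref{L:eigen}, the $k=0$ mode forces $\Phi_0\equiv 0$. Testing $v_q\ge 0$ against $\sin(\pi q/L)\le 0$ over $(-L,0)$ gives $-\tfrac{\pi}{2}\Phi_1(p)=\int_{-L}^0 v_q(q,p)\sin(\pi q/L)\,dq\le 0$, so $\Phi_1\ge 0$; and $\Phi_1\equiv 0$ would force $v_q\equiv 0$ on $R^-$, hence $v=\Phi_0\equiv 0$, contradicting $\|v\|_X=1$. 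Thus $\Phi_1$ is a nonnegative, nontrivial, exponentially decaying solution of the $k=1$ problem, whence by the maximum principle $\Phi_1>0$ on $(-\infty,0]$; so $-(\pi/L)^2$ is a generalized eigenvalue of \eqref{E:Sturm} (with $\epsilon=0$) with positive eigenfunction $\Phi_1$, and an integration by parts shows that the Rayleigh quotient \eqref{D:R} (at $\epsilon=0$) of $\Phi_1$ equals $-(\pi/L)^2$, so $\Lambda^0(\lambda)\le-(\pi/L)^2<0$. By the Rayleigh principle $\Lambda^0(\lambda)$ is then a simple eigenvalue with a positive eigenfunction $\phi_0$; since eigenfunctions of the self-adjoint Sturm--Liouville problem corresponding to distinct eigenvalues are orthogonal in $L^2(a\,dp)$, while $\phi_0,\Phi_1>0$, we must have $\Lambda^0(\lambda)=-(\pi/L)^2$. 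By the monotonicity of $\Lambda^0$ (recalled in Lemma~\ref{L:eigen} and the $\epsilon=0$ remark), $\lambda=\lambda^0$. The main obstacle is the compactness step: absent the uniform exponential decay of $(v_j)_q$ — which is exactly why the scaling of the barrier constant in Lemma~\ref{L:exp-decay} must be tracked — the normalized sequence need not converge in $X$ and the argument collapses.
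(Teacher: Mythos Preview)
Your argument is correct, and the endgame (positivity of the first Fourier mode, orthogonality in $L^2(a\,dp)$, monotonicity of $\Lambda^0$) is the same as the paper's. The route, however, differs in a way worth noting. The paper normalizes $\partial_q w_j$ rather than $w_j$, setting $v_j=\partial_q w_j/\|\partial_q w_j\|_{C^{2+\alpha}}$, and exploits the $q$-translation invariance of $F$ to obtain the \emph{exact} identity $(A(\lambda_j,w_j),B(\lambda_j,w_j))[v_j]=0$ (simply differentiate $F(\lambda_j,w_j)=0$ in $q$). This avoids your Taylor remainder entirely and places the limit $v$ directly in the kernel of $F_w(\lambda,0)$. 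Because $v$ is then automatically odd in $q$ and vanishes on $q=0,-L$, the paper writes $v=\partial_q\varphi$, expands in a sine series, and reads off the $k=1$ coefficient without having to dispose of a $k=0$ mode. Your cosine-series route is fine, but it costs you the extra step $\Phi_0\equiv 0$ and the contrapositive argument that $\Phi_1\equiv 0\Rightarrow v_q\equiv 0$.

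On the compactness step, both proofs hinge on the same observation you made explicit: in Lemma~\ref{L:exp-decay} the barrier amplitude $A$ may be taken to be $\sup_T|w_q|$ (not the full bound $M$), so that after normalization the exponential decay of $(v_j)_q$ is uniform in $j$. The paper invokes this tacitly when it says ``the result of Lemma~\ref{L:exp-decay} states that $v_j$ decays exponentially uniformly for $j$''; your version makes the scaling transparent. In short: your proof trades the translation-invariance trick for a Taylor expansion, which is the textbook bifurcation move and works just as well here, at the price of a slightly longer Fourier step and an error term to carry through the Schauder estimate.
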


\begin{proof}
The proof is similar to that of \cite[Lemma 4.8]{Hur06}. 
Let $(\lambda,0) \in \mathcal{S}_\delta$ and let 
$\{(\lambda_j,w_j)\} \subset \mathcal{S}_\delta$, $w_j \not\equiv 0$ for each $j$ such that 
$(\lambda_j,w_j) \to (\lambda,0)$ as $j \to \infty$ in $\mathbb{R} \times X$.

Let $v_j=\partial_q w_j/\|\partial_q w_j\|_{C^{2+\alpha}(\overline{R})}$. 
Since every $\partial_q w_j$ is not identically zero in $\overline{R}$, the function $v_j$ is well-defined.
It is straightforward that each $v_j$ satisfies
\[
(A(\lambda_j,w_j),B(\lambda_j,w_j))[v_j]=0.
\]
Since $\{v_j\}$ is bounded in $C^{2+\alpha}(\overline{R})$,
it follows that $v_j \to v$ in $C^2_{per}(\overline{R'})$ for some $v$
for any bounded subset $R'$ of $R$. 
Moreover, by continuity, \begin{equation}\label{E:v}
(A(\lambda,0), B(\lambda,0))[v]=0.\end{equation}
We claim that $v_j \to v$ in $C^{2+\alpha}(\overline{R})$. 
The proof is similar to that of Lemma \ref{L:proper} or Lemma \ref{L:compact},
and thus we only sketch the outline of the proof. 

First, the result of Lemma \ref{L:exp-decay} states that $v_j(q,p)$ decays 
exponentially as $p \to -\infty$ uniformly for $j$. 
Then, by repeating the argument as in Lemma \ref{L:compact}, 
one accomplishes that $v_j \to v$ as $j \to \infty$ in $C^0_{per}(\overline{R})$. 
Since$\|v_j\|_{C^{2+\alpha}(\overline{R})}=1$ for each~$j$, subsequently, 
by interpolation inequality as in \eqref{E:interp}, 
it follows that $v_j \to v$ as $j \to \infty$ in $C^2_{per}(\overline{R})$. 
Next, it is straightforward that $v_j-v$ satisfies
\begin{alignat*}{2}
A(\lambda_j,w_j)[v_j-v]&=
\bigl(A(\lambda,0)&-A(\lambda_j,w_j)\bigr)[v] \quad &\text{in }\, R{,} \\
B(\lambda_j,w_j)[v_j-v]&=\bigl(B(\lambda,0)&-B(\lambda_j,w_j)\bigr)[v]\quad &\text{on }\, T.
\end{alignat*}
Since $A(\lambda_j, w_j)$ is uniformly elliptic and $B(\lambda_j, w_j)$ is uniformly oblique for each $j$, 
the Schauder estimates~\cite{ADN59} yield
\begin{equation}\label{E:Schauder5}
\begin{split}
\|v_j-v\|_{C^{2+\alpha}(\overline{R})} \leq\; C
\bigl(& \|A(\lambda_j,w_j)[v_j-v]\|_{C^{\alpha}(\overline{R})} \\
&+\|B(\lambda_j,w_j)[v_j-v]\|_{C^{1+\alpha}(T)} +\|v_j-v\|_{C^0(\overline{R})} \bigr){,}
\end{split}
\end{equation}
where $C>0$ is independent of index $j$.
Since $\|v_j\|_{C^{2+\alpha}(\overline{R})}=1$ for each $j$ and 
since $A(\lambda_j,w_j)$ is equicontinuous, it follows that 
\[
\|\bigl(A(\lambda,0)-A(\lambda_j,w_j)\bigr)[v]\|_{C^\alpha(\overline{R})} \to 0 \qquad \text{as $j \to \infty$.}
\]
Moreover, by the standard elliptic theory in a bounded domain, it follows that 
\[ \|B^P(\lambda_j,w_j)[w_j-w]\|_{Y_2} \to 0 \qquad \text{as $j\to \infty$.}\]
Therefore, \eqref{E:Schauder5} proves the claim.
Furthermore, $v, \nabla v, \nabla^2 v \in o(1)$ as $p \to -\infty$ 
uniformly for $q$ and $\|v\|_{C^{2+\alpha}(\overline{R})}=1$.

By the periodicity and symmetry consideration, it follows that $v=\partial_q \varphi$ for some $\varphi$. 
Indeed, since each $v_j$ is $2L$-periodic in the $q$-variable and 
since it is of mean zero over one period, 
that is, $$\int^L_{-L} v_j(q,p) dq=0 \qquad \text{for all $p \in (-\infty,0)$,}$$ by continuity, 
$v$ is also $2L$-periodic in the $q$-variable and it is of mean zero over one period. 
Thus, the assertion follows, where $\varphi$ is $2L$-periodic in the $q$-variable. 

Let us write \eqref{E:v} as 
\begin{equation}\label{E:lambda0}
(A(\lambda,0), B(\lambda, 0))[\partial_q \varphi]=0.
\end{equation}
Recalling the notation and the result of the nodal preservation in the proof of Theorem \ref{T:global1a},
since $v_j>0$ on $R^-$ and $v_j=0$ on $\partial R^-_l \cup \partial R^-_r$ for each $j$, by continuity, 
$\partial_q \varphi \geq 0$ on $R^-$ and  $\partial_q\varphi=0$ on $\partial R^-_l \cup \partial R^-_r$. 
Furthermore, since $\partial_q \varphi$ satisfies the second-order elliptic partial differential equation
\eqref{E:lambda0} and since $\partial_q\varphi \not \equiv 0$ in $R^-$, 
the maximum principle ensures that $\partial_q \varphi>0$ in~$R^-$.
Hence, $\partial_q\varphi$ may be expanded as a sine series
\[
\partial_q\varphi(q,p)=
\sum\limits^\infty\limits_{k=0} \varphi_k(p)\sin k(\pi/L)q,
\]
where $\varphi_k \to 0$ as $p \to -\infty$ for all $k$. Accordingly, \eqref{E:lambda0} is written as
\begin{align*}
\sum\limits^\infty\limits_{k=0}
\bigl((a^3\A\varphi'_k)'- k^2(\pi/L)^2a\A\varphi_k\bigr)\sin kq &=0
\qquad \text{in }\, R^-{,} \\
\sum\limits^\infty\limits_{k=0} \bigl(-2\sqrt{\lambda}\varphi'_k(0)+2g/\lambda\varphi_k(0)\bigr)\sin kq &=0
\qquad \text{on }\, T{.}
\end{align*}
In particular, $\varphi_1$ solves the boundary value problem
\begin{alignat*}{2}
(a^3\A\varphi'_1)'&=(\pi/L)^2a\A\varphi_1 & &\quad \text{for }p \in (-\infty,0){,} \\
\lambda^{3/2}\varphi_1'(0)&=g\varphi_1(0){,} 
\end{alignat*}
subject to boundary conditions $\varphi_1, \varphi_1' \to 0$ as $p \to -\infty$. 
We observe that $\varphi_1$ is a solution of the Sturm-Liouville problem \eqref{E:Sturm} 
when $\epsilon=0$ with the generalized eigenvalue $\mu=-(\pi/L)^2$.

In view of the definitions  \eqref{D:R} and \eqref{D:Lambda} at $\epsilon=0$ 
it follows that $\Lambda^0(\lambda) \leq R^0(\varphi_1;\lambda)=-(\pi/L)^2$.
Suppose that $\Lambda^0(\lambda)<-(\pi/L)^2$;
the minimizer $\varphi_*$ of $\Lambda^0(\lambda)$ would be an eigenfunction 
corresponding to the simple eigenvalue $\Lambda^0(\lambda)$
(such that $R^0(\varphi_*;\lambda)=\Lambda^0(\lambda)$), and hence
$\varphi_*$ would not vanish on $p \in (-\infty,0)$. On the other hand,
\begin{equation*}\label{E:monotone}
\varphi_1(p) =\frac{2}{L}\int^0_{-L}\partial_q \varphi(q,p)\sin (\pi/L)q \;dq <0 
\qquad \text{for all }p \in (-\infty,0).
\end{equation*}
This contradicts the orthogonality
\[
\int^0_{-\infty} a\A \varphi_*(p)\varphi_1(p) dp =0.
\]
Therefore, $\Lambda^0(\lambda)=-(\pi/L)^2$, and 
$\lambda=\lambda^0$ follows from the monotonicity of $\Lambda^0$ (see \cite[Lemma~3.4]{CoSt04}).
This completes the proof.
\end{proof}

The global existence result for nontrivial solutions of \eqref{E:main} is now immediate 
and it is described in the next theorem.

\begin{theorem}[Global bifurcation for \eqref{E:main}]\label{T:global2}
For each $\delta>0$ let $\mathcal{C}'_\delta$ denote the maximal connected component 
of the closure in $\mathbb{R} \times X$ of $\Ss$ containing $(\lambda^0,0)$.

\renewcommand{\labelenumi}{{\rm(\roman{enumi})}}
\begin{enumerate}
\item The continuum $\mathcal{C}'_\delta$ either is unbounded in\/ $\mathbb{R} \times X$ 
or intersects $ \partial \OO$.
  
\item Each nontrivial solution lying on $\mathcal{C}'_\delta$
has the nodal properties \eqref{nodal1}-\eqref{nodal3}.
\end{enumerate}
\end{theorem}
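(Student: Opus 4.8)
The plan is to derive both assertions from Theorem~\ref{T:A6}, applied with $S=\mathcal{S}_\delta$, $C=\mathcal{C}'_\delta$ and $(\lambda,0)=(\lambda^0,0)$. First I would record that the hypotheses of Theorem~\ref{T:A6} are in hand: $\mathcal{S}_\delta$ is closed (this is the content of Lemma~\ref{L:nodal} together with the remarks preceding it, which rule out any trivial solution other than $(\lambda^0,0)$ in its closure), and every bounded subset of $\mathcal{S}_\delta$ is relatively compact in $\mathbb{R}\times X$ by Lemma~\ref{L:compact}. Theorem~\ref{T:A6} then reduces assertion~(i) to showing that $\partial U \cap \mathcal{S}_\delta \neq \emptyset$ for every bounded open set $U$ with $(\lambda^0,0)\in U$ and $\overline{U}\subset \mathcal{O}_\delta$. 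Assertion~(ii) is immediate once this is done, since $\mathcal{C}'_\delta\subset\overline{\mathcal{S}_\delta}=\mathcal{S}_\delta$ and, by the definition \eqref{D:S_delta}, every nontrivial member of $\mathcal{S}_\delta$ satisfies \eqref{nodal1}--\eqref{nodal3}.

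To produce a point of $\partial U\cap\mathcal{S}_\delta$, I would fix such a $U$, pick a sequence $\epsilon_j\downarrow 0$, and for $j$ large (so that $(\lambda^{\epsilon_j},0)\in U$, which holds by Lemma~\ref{L:eigen}(ii)) invoke Corollary~\ref{C:AT} to get $(\lambda_j,w_j)\in\partial U\cap\mathcal{C}^{\epsilon_j}_\delta$. Since $\overline{U}$ is bounded, $|\lambda_j|+\|w_j\|_X\leq M$ uniformly in $j$, and since $\partial U\subset\overline{U}\subset\mathcal{O}_\delta$, each $(\lambda_j,w_j)$ lies in $\mathcal{O}_\delta$; Corollary~\ref{C:exp-decay} then gives the uniform bound $|\partial_q w_j(q,p)|\leq M(2-e^{\beta q})e^{\sigma p}$ on $R$, with $\beta,\sigma$ depending only on $\delta$ and $M$. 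Writing $F^{\epsilon_j}(\lambda_j,w_j)=0$ as $F_1(\lambda_j,w_j)=\epsilon_j w_j$ and $F_2(\lambda_j,w_j)=0$, with $\|\epsilon_j w_j\|_{Y_1}\to 0$, I would re-run the argument of Lemma~\ref{L:compact} treating $\epsilon_j w_j$ as a vanishing lower-order perturbation, so that along a subsequence $\lambda_j\to\lambda_*$ in $\mathbb{R}$ and $w_j\to w_*$ in $X$; then $F(\lambda_*,w_*)=0$ by continuity and $(\lambda_*,w_*)\in\partial U$, and passing the decay bound to the limit gives $\partial_q w_*\in O(|p|^{-1-\rho})$ as $p\to-\infty$.

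It then remains to check that $(\lambda_*,w_*)\in\mathcal{S}_\delta$, i.e.\ that $w_*\not\equiv 0$ and $w_*\in\mathcal{N}$. For nontriviality I would argue by contradiction: if $w_*\equiv 0$, set $v_j=\partial_q w_j/\|\partial_q w_j\|_{C^{2+\alpha}(\overline{R})}$, which solves the linear problem $(A^{\epsilon_j}(\lambda_j,w_j),B(\lambda_j,w_j))[v_j]=0$ and, by the Phragm\'en--Lindel\"of barrier of Lemma~\ref{L:exp-decay} as used in Lemma~\ref{L:nodal}, decays exponentially uniformly in $j$; a subsequence converges in $C^{2+\alpha}(\overline{R})$ to some $v$ with $\|v\|_{C^{2+\alpha}(\overline{R})}=1$, and since $\epsilon_j\to 0$ and $w_j\to 0$ the limit satisfies $(A(\lambda_*,0),B(\lambda_*,0))[v]=0$; the Sturm--Liouville and orthogonality argument of Lemma~\ref{L:nodal} then forces $\lambda_*=\lambda^0$, contradicting $(\lambda_*,0)\in\partial U$ while $(\lambda^0,0)\in U$ is open. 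For the nodal properties, $w_j\in\mathcal{N}$ for all $j$ by Theorem~\ref{T:global1a}, hence $w_*\in\overline{\mathcal{N}}$; since $\partial_q w_*\not\equiv 0$ (otherwise $w_*$ would be a $q$-independent solution of \eqref{h-prob}, hence the trivial flow $h_{tr}(\cdot;\lambda_*)$ by the uniqueness in Lemma~\ref{L:trivial}, forcing $w_*\equiv 0$), the strong maximum principle, the Hopf boundary lemma and the edge-point lemma applied to $\partial_q w_*$ on $R^-$ recover the strict inequalities \eqref{nodal1}, \eqref{nodal2} and \eqref{false-nodal}, and differentiating $F_2(\lambda_*,w_*)=0$ twice in $q$ --- as already done in Section~\ref{SS:global-approx} --- upgrades \eqref{false-nodal} to \eqref{nodal3}. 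Thus $(\lambda_*,w_*)\in\mathcal{S}_\delta\cap\partial U$, which completes~(i).

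The step I expect to be the main obstacle is the passage $\epsilon_j\to 0$: since $F$ is not proper on $\mathcal{O}_\delta$ (its limiting problem at $p=-\infty$ has infinitely many $C^0$ solutions), compactness of $\{(\lambda_j,w_j)\}$ in $X$ cannot be taken for granted and must be extracted from the uniform-in-$j$ exponential decay of Corollary~\ref{C:exp-decay}, which is exactly the payoff of the Phragm\'en--Lindel\"of estimate in Lemma~\ref{L:exp-decay}. That same uniform decay is what makes Wyburn's lemma, through Theorem~\ref{T:A6}, promote the conclusion from ``$\partial U$ meets the solution set'' to ``$\mathcal{C}'_\delta$ is connected''.
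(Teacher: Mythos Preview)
Your proposal is correct and follows essentially the same route as the paper: apply Theorem~\ref{T:A6} with $S=\mathcal{S}_\delta$, produce boundary points via Corollary~\ref{C:AT}, and pass to the limit $\epsilon_j\to 0$ using the uniform exponential decay from Corollary~\ref{C:exp-decay} together with the Schauder machinery of Lemma~\ref{L:compact}. Your treatment of the case $w_*\equiv 0$ (adapting the Sturm--Liouville argument of Lemma~\ref{L:nodal} to the approximate solutions and deriving a contradiction with $(\lambda^0,0)\in U$) is in fact more explicit than the paper, which at that point simply writes ``$(\lambda,w)\in\mathcal{C}'_\delta$'' without spelling out why this is impossible on $\partial U$.

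One organizational remark: you deduce (ii) from the closedness of $\mathcal{S}_\delta$ and the built-in requirement $w\in\mathcal{N}$ in \eqref{D:S_delta}, whereas the paper proves (ii) by a separate connectedness argument along $\mathcal{C}'_\delta$. Your route is cleaner, but note that the ``remarks preceding Lemma~\ref{L:nodal}'' you cite for closedness do not themselves verify that a nontrivial limit stays in $\mathcal{N}$; you do supply exactly that argument later (maximum principle, Hopf lemma, edge-point lemma on $\partial_q w_*$, plus the second $q$-derivative of $F_2$), so the proof is complete---just make sure to invoke it when you assert $\overline{\mathcal{S}_\delta}=\mathcal{S}_\delta$, not only for the limit of approximate solutions.
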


\begin{proof}
(i) By virtue of Theorem~\ref{T:A6}, it suffices to show that
if $U$ is a bounded open set with $(\lambda^0,0) \in U$ and $\overline{U} \subset \OO$,
then $\partial U \cap \Ss \neq \emptyset$. 
Let $U$ be such an open set.

The result of Corollary~\ref{C:AT} says that there are sequences $\{\epsilon_j\}$ 
such that  $\epsilon_j \to 0$ as $j \to \infty$ with $0<\epsilon_j<1$ small, 
and $\{(\lambda_j,w_j)\} \subset \overline{U}$ such that
\[
(\lambda_j, w_j) \in \partial U \cap \mathcal{C}^{\epsilon_j}_\delta \qquad \text{for each $j$}.
\]
It is immediate that $(\lambda_j,w_j) \in \OO$ forms a bounded sequence in $\mathbb{R} \times X$ 
and that 
\begin{equation}\label{E:wj}
(F_1(\lambda_j, w_j) -\epsilon_j w_j, F_2(\lambda_j,w_j))=0 \qquad \text{for each $j$}.
\end{equation}
Possibly by relabeling, $\lambda_j \to \lambda$  as $j \to \infty$ for some $\lambda$. 
It will follow by the methods in the proof of Lemma~\ref{L:compact}
that $\{w_j\}$ has a subsequence which converges to $w$ in $X$. 
By continuity, $F(\lambda, w)=0$. 

The remainder of the proof is nearly identical to those of 
Lemmas~\ref{L:proper} or Lemma \ref{L:compact}, and we only outline its various steps.

Without loss of generality, we assume $|\lambda_j|+\|w_j\|_X<M$ for each $j$ for some $M>0$. 
The result of Corollary \ref{C:exp-decay} states that 
$\partial_q w_j(q,p)$ decays exponentially as $p \to -\infty$ in $(q,p) \in R$ uniformly for $j$. 
As is done in Lemma \ref{L:compact}, then, 
$\partial_q^2 w_j$ decay exponentially as $p \to -\infty$ uniformly for $j$,
and $w_j(0,\cdot)$ decays as $p \to -\infty$ uniformly for $j$.
In view of \eqref{E:w-decomp}, by adapting arguments of Ascoli type,   
it follows that $\{ w_j\}$ converges to $w$ as $j \to \infty$ in $C^0(\overline{R})$.

Next, since $\{ w_j\}$ is bounded in $X$, an interpolation inequality similar to that in 
\eqref{E:interp} (see \cite[Theorem 3.2.1]{Kry96}, for instance)
asserts that $w_j \to w$ as $j \to \infty$ in $C^3_{per}(\overline{R})$.
Since $A^P(\lambda_j,w_j)$ is uniformly elliptic and $B^P(\lambda_j,w_j)$ is uniformly oblique 
for each $j$, Schauder theory~\cite{ADN59} asserts that the inequality
\begin{align*}
\| w_j-w \|_X \leq C(\|A^P(\lambda_j,w_j)[w_j-w]\|_{Y_1}+
\|B^P(\lambda_j,w_j)[w_j-w]\|_{Y_2} +\| w_j-w\|_Z)
\end{align*}
holds for each $j$, where $C>0$ is independent of the index $j$.
It is straightforward that
\begin{align*}
A^P(\lambda_j,w_j)[w_j-w]=\epsilon_j (w_j-w)&-\bigl(A^P(\lambda_j,w_j)-A^P(\lambda,w)\bigr)w\\
&\quad -\bigl(f_1(\lambda_j,w_j)-f_1(\lambda,w)\bigr)+\epsilon_jw,\\
B^P(\lambda_j,w_j)[w_j-w] =\,
-\bigl(B^P(\lambda_j, &w_j)-B^P(\lambda,w)\bigr)w-\bigl(f_2(\lambda_j,w_j)-f_2(\lambda,w)\bigr).
\end{align*}
By arguments completely analogous to those in the proof of Lemma~\ref{L:proper}, then,
\[ 
\| A^P(\lambda_j,w_j)[w_j-w]\|_{Y_1} \to 0\qquad \text{as }j \to \infty,
\]
and
\[
\|B^P(\lambda_j,w_j)[w_j-w]\|_{Y_2} \to 0 \qquad \text{as } j \to \infty
\]
follow, whence it follows that $w_j \to w$ in~$X$. 

By continuity, $F(\lambda,w)=0$. If $w$ is not trivial then $(\lambda,w) \in \mathcal{C}'_\delta$,
upon recalling the definitions of $\mathcal{S}_\delta$ and $\mathcal{C}'_\delta$.
If $w=0$ then by Lemma \ref{L:nodal}, it must follows that $\lambda=\lambda^0$,
and hence $(\lambda, w) \in \mathcal{C}'_\delta$. This proves the assertion.

(ii) The proof, requiring to show the preservation of the nodal property of $\mathcal{N}$
along the continuum $\mathcal{C}'_\delta \setminus (\lambda^0,0)$, 
is nearly identical to that of \cite[Lemma C.3]{Hur06}. 
Suppose the contrary. Since $\Cd \subset \Ss$ is connected, 
there must be a nontrivial solution $(\lambda, w) \in \Cd$
with $w_q \not\equiv 0$ such that 
at least one of the nodal properties \eqref{nodal1}-\eqref{nodal3} would fail for $(\lambda, w)$.
We argue by contradiction using the maximum principle, 
the Hopf boundary lemma, and its sharp form at corner points due to Serrin.
The detail of the proof is in \cite[Appendix C]{Hur06}.
This completes the proof.
\end{proof}

The proof of (i) in the above theorem entails to take the limit of 
a bounded set in $\mathcal{C}^\epsilon_{\delta}$ as $\epsilon \to 0$ in the sense to show that 
a bounded sequence $\{(\lambda_j, w_j)\}$ for each $(\lambda_j,w_j)$ is a solution 
of the approximate problems \eqref{E:approx} with $\epsilon=\epsilon_j$ 
converges to a solution of the singular problem \eqref{E:main}
as $j \to \infty$ if $\epsilon_j \to 0$ as $j\to \infty$. 
The uniform decay property of solutions of \eqref{E:main} ensures that 
the convergence takes place in the strong topology of $\mathbb{R} \times X$.

The purpose of the following lemma is 
to obtain bounds for the higher derivatives of $w$
in terms of $w$ and $w_p$ uniformly along the continuum~$\Cd$.

\begin{lemma}\label{T:regularity}
For each $\delta>0$, if $\sup_{(\lambda,w) \in \mathcal{C}'_\delta} \lambda <\infty$ then
$\sup_{(\lambda, w) \in \Cd} (\|w\|_{C^0(\overline{R})}+\|w_p\|_{C^0(\overline{R})}) <\infty$
implies $\sup_{(\lambda, w)\in\Cd}\| w\|_X<\infty$.
\end{lemma}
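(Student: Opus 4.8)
The plan is to show that, under the two stated hypotheses, $\|w_q\|_{C^0(\overline{R})}$ is also bounded uniformly along $\Cd$, and then to run an elliptic bootstrap in the quasilinear equation $F(\lambda,w)=0$ to upgrade this to a uniform bound for $\|w\|_X$. Fix $M>0$ with $\lambda\le M$ and $\|w\|_{C^0(\overline{R})}+\|w_p\|_{C^0(\overline{R})}\le M$ for every $(\lambda,w)\in\Cd$. Since $\Cd\subset\Ob$ one also has $\lambda\ge-2\Gamma_{\inf}+\delta$ and $a^{-1}(\lambda)+w_p\ge\delta$ in $\overline{R}$, whence $a(p;\lambda)=(\lambda+2\Gamma(p))^{1/2}$ stays between two positive constants and $h_p=a^{-1}(\lambda)+w_p$ stays between $\delta$ and a constant $M'=M'(\delta,M)$; in particular the principal part of $F_1$ is uniformly elliptic and $B(\lambda,w)$ uniformly oblique, with constants depending only on $\delta$ and $M$.

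The crux is the $C^0$ bound on $w_q$, and I would obtain it by passing to the equivalent free-boundary formulation {\rm(\ref{E:stream}b)--(\ref{E:stream}e)} via \cite[Lemma~3.1]{Hur06}: $(\lambda,w)$ corresponds to a solution $(c,\eta,\psi)$ of {\rm(\ref{E:stream}b)--(\ref{E:stream}e)} with $h_p=-1/\psi_y$ and $w_q=h_q=-\psi_x/\psi_y$, so that $-1/\delta\le\psi_y\le-1/M'<0$ by the previous paragraph. Differentiating $-\Delta\psi=\gamma(\psi)$ shows that both $\psi_x$ and $-\psi_y$ solve the \emph{linear} equation $\Delta\Phi+\gamma'(\psi)\Phi=0$ in $\Omega_\eta$, with $\gamma'(\psi)$ bounded because $\gamma\in C^{1+\alpha}([0,\infty))$ and $\psi$ ranges over $[0,\infty)$. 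Since $-\psi_y>0$, the quotient $w_q=\psi_x/(-\psi_y)$ satisfies the divergence-form equation
\[
\mathrm{div}_{x,y}\!\big(\psi_y^{2}\,\nabla_{x,y}w_q\big)=0 \qquad\text{in }\Omega_\eta,
\]
which is uniformly elliptic and has no zeroth-order term. On the vertical lines $x=0,\pm L$ one has $\psi_x=0$ by evenness, so $w_q=0$; on the free surface Bernoulli's law {\rm(\ref{E:stream}d)} together with $\eta=h_{tr}(0;\lambda)+w(\cdot,0)=-\lambda/(2g)+w(\cdot,0)$ (Lemma~\ref{L:trivial}) gives $|\nabla\psi|^{2}=-2g\eta=\lambda-2gw(\cdot,0)$, so $|\psi_x|$, hence $|w_q|=|\psi_x/\psi_y|$, is bounded there by a constant depending only on $\delta,M,g$; and $w_q\to0$ as $y\to-\infty$ by {\rm(\ref{E:stream}e)}. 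A Phragm\'en--Lindel\"of argument (apply the maximum principle on the bounded part $\{y\ge y_0\}$ of $\Omega_\eta$ over one period and let $y_0\to-\infty$) then yields $\|w_q\|_{C^0(\overline{R})}\le M_1(\delta,M,g)$. Working in the physical variables is essential here: the equation obtained by differentiating $F_1=0$ directly in $q$ is only uniformly elliptic once $w_q$ is already known to be bounded, whereas in the quotient identity the ellipticity constant depends only on $\psi_y=-1/h_p$, which is already controlled.

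With $\lambda,w,w_p,w_q$ all bounded in $C^0$, the equation $F_1(\lambda,w)=0$, written in divergence form as $\big((1+h_q^{2})/(2h_p^{2})\big)_p-\big(h_q/h_p\big)_q=\gamma(-p)$ with the conormal relation $(1+h_q^{2})/(2h_p^{2})=-gh$ on $T$ coming from {\rm(\ref{h-prob}b)}, becomes uniformly elliptic with uniformly bounded data. The bootstrap is then standard. Differentiating the divergence-form equation shows that $h_q$ and $h_p$ each satisfy a linear divergence-form equation with bounded measurable, uniformly elliptic coefficients $a^{ij}(\nabla h)$ and bounded right-hand side, so the De~Giorgi--Nash--Moser theory \cite[Chapter~8]{GiTr01}, applied in the interior and up to $T$ (the conormal relation differentiates to a homogeneous Robin condition for $h_q$), gives $\nabla w\in C^{\beta}(\overline{R})$ for some $\beta>0$ with a bound depending only on $\delta$ and $M$. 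Now the coefficients of the principal part of $F_1$ and of $B(\lambda,w)$ are uniformly $C^{\beta}$, and iterating the Schauder estimates for oblique boundary-value problems \cite{ADN59} upgrades $w$ successively to $C^{2+\beta}$, then $C^{3+\beta'}$, and finally, using $\gamma\in C^{1+\alpha}$, to $C^{3+\alpha}$. Since all these estimates are local (on unit balls, with $w$ extended periodically in $q$) while the explicit coefficients $a(p;\lambda),\gamma(-p)$ and their derivatives are bounded in $p$ uniformly for $\lambda\in(-2\Gamma_{\inf}+\delta,M]$ and $w,\nabla w$ are now bounded uniformly on $\overline{R}$, the constants do not depend on the center of the ball; taking the supremum over $\overline{R}$ gives $\sup_{(\lambda,w)\in\Cd}\|w\|_X<\infty$. (No decay at infinity is needed for this; the exponential decay of Lemma~\ref{L:exp-decay} is used only in the compactness arguments.)

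The main obstacle is exactly the $C^0$ bound on $w_q$: the quasilinear operator loses uniform ellipticity precisely where $w_q$ is large, so that bound cannot be extracted by a self-contained argument in the strip $R$; realizing $w_q$ as a quotient of two solutions, with positive denominator, of a single linear elliptic equation in the physical domain---which removes the zeroth-order term and trivializes the ellipticity constant---is what unlocks the estimate. Everything afterwards is a routine elliptic bootstrap.
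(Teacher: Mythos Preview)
Your argument is correct, but the route you take for the $w_q$ bound is more circuitous than necessary, and your stated reason for avoiding the direct approach is mistaken. The paper simply applies the maximum principle to $v=w_q$ in the strip $R$: differentiating $F_1(\lambda,w)=0$ in $q$ gives $A^P(\lambda,w)[v]+b_1v_p+b_2v_q=0$ with \emph{no zeroth-order term}, while the boundary condition $F_2=0$ on $T$ rewrites as $w_q^{2}=(\lambda-2gw)(\lambda^{-1/2}+w_p)^{2}-1$, bounding $|w_q|$ on $T$ directly in terms of $\lambda$, $w|_T$ and $w_p|_T$. Since $w_q=0$ on the lateral sides and $w_q\to0$ as $p\to-\infty$, the maximum principle gives the interior bound. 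Your worry that the operator ``loses uniform ellipticity where $w_q$ is large'' is beside the point: the classical maximum principle needs only ellipticity at each point, and in fact the discriminant $4(1+w_q^{2})(a^{-1}+w_p)^{2}-4(a^{-1}+w_p)^{2}w_q^{2}=4(a^{-1}+w_p)^{2}\ge4\delta^{2}$ does not even see $w_q$. Your quotient identity $\mathrm{div}_{x,y}(\psi_y^{2}\nabla(\psi_x/(-\psi_y)))=0$ in $\Omega_\eta$ is elegant and correct---it is the standard device that kills the zeroth-order term when two functions satisfy the same linear equation and one has a sign---but it is not needed to unlock the estimate.

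For the bootstrap, the paper invokes the Lieberman--Trudinger a~priori estimates for quasilinear oblique problems to pass directly from $C^{0}$ gradient bounds to $C^{1+\alpha}$ control, then iterates Schauder; you instead run De~Giorgi--Nash--Moser on the differentiated divergence-form equation before Schauder. Both paths are standard and lead to the same conclusion; yours is more self-contained, the paper's more compact.
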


\begin{proof}
Provided that $\lambda$ along $\mathcal{C}'_\delta$ is bounded, 
by the maximum principle it follows that 
$w_q$ along $\Cd$ is bounded by the maxima in $T$ of $w$ and $w_p$ in $\Cd$.
Then, by a priori estimates of Schauder type due to Lieberman and Trudinger \cite{LiTr}
for quasilinear elliptic partial differential equations with nonlinear oblique boundary conditions, 
it follows that the higher derivatives of $w$ along $\Cd$ are bounded by 
the maximum norms in $\overline{R}$ of $w_p$ and $w_q$ along $\Cd$. 
The detail of the proof is in \cite[Section 5.2]{Hur06}.
\end{proof}

\begin{remark}\label{R: alt}\rm
By virtue of Lemma \ref{T:regularity}, in case $\Cd$ is unbounded in $\mathbb{R} \times X$, 
there is a sequence of solution pairs 
$\{(\lambda_j,w_j)\} \subset \Cd$ such that either
\begin{enumerate}
\item ${\displaystyle \lim_{j \to \infty} \lambda_j=\infty}$ or
\item $\lambda_j$ is bounded for all j while either 
${\displaystyle \lim_{j \to \infty} \| w_j \|_{C^0(\overline{R})}=\infty}$ or 
${\displaystyle \lim_{j \to \infty} \| \partial_pw_j \|_{C^0(\overline{R})}=\infty}$.
\end{enumerate}
If the other alternative that $\Cd$ intersects $\partial \OO$ realizes, 
there is a solution pair $(\lambda, w) \in \Cd$ such that 
one of the following holds:  
\begin{enumerate}\addtocounter{enumi}{2}
\item $\lambda = 2\Gamma_{\inf} + \delta$, 
\item $a^{-1}\A+w_p =\delta$  somewhere in $\overline{R}$, 
\item $ w=\frac{2\lambda-\delta}{4g}$ somewhere on $T$.
\end{enumerate}
\end{remark}

\section{Properties of rotational Stokes waves}\label{S:properties}
Established are properties of solutions of \eqref{E:main}, and in turn, solutions of \eqref{E:stream}. 
The main results are proved.

\subsection{Properties of rotational Stokes waves}\label{SS:properties}
Throughout this subsection, $\delta>0$ is held fixed. 
For each solution pair $(\lambda, w) \in \mathcal{C}'_\delta$ of \eqref{E:main}, 
let $(c, \eta, \psi)$ be the corresponding solution triple of \eqref{E:stream}. 
As such, $(\lambda,w)$ and $(c,\eta,\psi)$ are related 
via the transforms in Section \ref{SS:reformulation}. More precisely, 
\begin{alignat}{3}
c^2&=\lambda-2\Gamma_\infty,\qquad  &&\eta(x)=w(x,0)-\frac{\lambda}{2g} ,\label{E:c-eta} \\
\psi_x&=-\frac{w_q}{a^{-1}(\lambda)+w_p}, \qquad && \psi_y=-\frac{1}{a^{-1}\A +w_p}. \label{E:velocity}
\end{alignat}
Related to these, 
\begin{equation}\label{E:hw}
h_q=w_q=-\frac{\psi_x}{\psi_y}, \qquad h_p=a^{-1}\A +w_p=-\frac{1}{\psi_y}.
\end{equation}

Recalled are the notations 
\[ \Omega_\eta=\{(x,y): -\infty<x<\infty, \, -\infty<y<\eta(x)\}, \quad 
S_\eta=\{(x,\eta(x)): -\infty<x<\infty\}.\]
Since $(\lambda, w) \in \mathbb{R} \times X$ it follows that 
$(c,\eta,\psi) \in \mathbb{R}_+\times C^{3+\alpha}_{per}(\mathbb{R}) \times 
C^{3+\alpha}_{per}(\overline{\Omega}_\eta)$.

Established properties are summarized of solutions of \eqref{E:main}, and in turn, 
solutions of \eqref{E:stream}, and further properties are inferred. 
These results are of independent interests.

First, by the nodal properties of $w$, for any nontrivial solution of \eqref{E:stream}
\begin{alignat}{3}\label{property1} 
\eta_x&(x)>0 \qquad &&\text{for}\quad  -L<x<0, \\
 \psi_x&(x,y)>0 \qquad && \text{for}\quad (x,y) \in \Omega_\eta \cup S_\eta, \quad-L<x<0, 
\end{alignat}
and 
\begin{equation}\label{property2}
\psi_{xx}(\pm L,y)>0 \quad \text{for $(\pm L,y) \in \Omega_\eta$}, \qquad 
\psi_{xx}(0,y)<0 \quad \text{for $(0,y) \in \Omega_\eta$}.
\end{equation}
hold. By evenness and periodicity properties of $w$, furthermore, 
\begin{alignat}{3}\label{property3}
\eta_x&(0)=\eta_x(\pm L)=0, \\
\psi_x&(0,y)=\psi_x( \pm L)=0 \qquad \text{for}\quad (0,y), (\pm L,y) \in \Omega_\eta
\end{alignat}
hold for any solution of \eqref{E:stream}.

By Lemma \ref{L:exp-decay}, any bounded solution of \eqref{E:stream} enjoys 
the following exponential decay estimate 
\begin{equation}\label{property4}
|\psi_x(x,y)|<C' e^{\sigma' y} \qquad \text{for $(x,y) \in \Omega_\eta$},
\end{equation}
where $C'>0$ large and $\sigma'>0$ depend only on $\delta>0$, $c>0$ and 
the H\"older norms of $\eta$ and $\psi$. 
By the classical gradient estimate in the elliptic theory \cite[p. 37]{GiTr01} it follows that 
\begin{equation}\label{property5}
|\psi_{xx}(0,y)| \leq C''e^{\sigma' y} \qquad \text{as $y \to -\infty$\quad uniformly for $x$.}
\end{equation}

Next, the maximum principle is employed to yield bounds for 
the relative velocity $\nabla \psi$ and the pressure. 

\begin{lemma}[Bounds for the velocity]\label{L:velocity-bound}
A nontrivial solution pair $\eta(x)$ and $\psi(x,y)$ of \eqref{E:stream} satisfies
\begin{equation}\label{E:velocity-bound1}
\psi_y^2(0,\eta(0))\leq |\nabla \psi(x,y)|^2-2\Gamma(-\psi(x,y)) \leq  \psi_y^2(\pm L,\eta(\pm L))
\end{equation}
for any $(x,y) \in \overline{\Omega}_\eta$. 
\end{lemma}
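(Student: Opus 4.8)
The plan is to study the quantity $\mathcal{P}(x,y):=|\nabla\psi(x,y)|^{2}-2\Gamma(-\psi(x,y))$ on $\overline{\Omega}_\eta$ and to show that it obeys both a maximum and a minimum principle whose extreme values on $\overline{\Omega}_\eta$ are attained, respectively, below the trough and below the crest. Observe first that $\psi_{y}^{2}(0,\eta(0))$ and $\psi_{y}^{2}(\pm L,\eta(\pm L))$ are exactly the boundary values of $\mathcal{P}$ at the crest and trough: on $S_\eta$ one has $\psi=0$, hence $\Gamma(-\psi)=\Gamma(0)=0$, and the Bernoulli condition (\ref{E:stream}d) gives $\mathcal{P}=|\nabla\psi|^{2}=-2g\eta(x)$ there; since $\eta$ has its single maximum at the crest $x=0$ and its single minimum at the trough $x=\pm L$ (cf.\ \eqref{property1} and Theorem~\ref{T:main}), $\min_{S_\eta}\mathcal{P}=-2g\eta(0)$ and $\max_{S_\eta}\mathcal{P}=-2g\eta(\pm L)$; and $\psi_x$ vanishes at the crest and trough by evenness and periodicity (cf.\ \eqref{property3}), so $-2g\eta(0)=\psi_{y}^{2}(0,\eta(0))$ and $-2g\eta(\pm L)=\psi_{y}^{2}(\pm L,\eta(\pm L))$.

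The key step is to derive a favorable elliptic equation for $\mathcal{P}$. Differentiating and using $\Delta\psi=-\gamma(\psi)$ one finds $\nabla\mathcal{P}=2(\psi_{y}\psi_{xy}-\psi_{x}\psi_{yy},\,\psi_{x}\psi_{xy}-\psi_{y}\psi_{xx})$ and $\Delta\mathcal{P}=4(\psi_{xy}^{2}-\psi_{xx}\psi_{yy})$; eliminating the Hessian of $\psi$ between these relations, for instance by the Cayley--Hamilton identity for the symmetric matrix $\nabla^{2}\psi$ together with $\operatorname{tr}\nabla^{2}\psi=-\gamma(\psi)$, leads to the identity $|\nabla\psi|^{2}\Delta\mathcal{P}+2\gamma(\psi)\,\nabla\psi\cdot\nabla\mathcal{P}-|\nabla\mathcal{P}|^{2}=0$ in $\Omega_\eta$. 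Because $\psi\in C^{3+\alpha}_{per}(\overline{\Omega}_\eta)$ we have $\mathcal{P}\in C^{2+\alpha}(\overline{\Omega}_\eta)$, and because $\psi_{y}<0$ on $\overline{\Omega}_\eta$ with $\psi_{y}\to-c<0$ at the bottom, $|\nabla\psi|$ is bounded below by a positive constant on $\overline{\Omega}_\eta$ (for the fixed solution at hand). Hence the identity may be read as a linear uniformly elliptic equation $\Delta\mathcal{P}+\vec{B}\cdot\nabla\mathcal{P}=0$ for the fixed function $\mathcal{P}$, with the continuous bounded drift $\vec{B}:=|\nabla\psi|^{-2}\!\left(2\gamma(\psi)\nabla\psi-\nabla\mathcal{P}\right)$ and, crucially, with no zeroth--order term; consequently the strong maximum principle applies to $\mathcal{P}$ and, applied to $-\mathcal{P}$, so does the strong minimum principle.

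To locate the extrema I would next note that at the infinite bottom $\nabla\psi\to(0,-c)$ and $\psi\to+\infty$, so $\Gamma(-\psi)\to\Gamma_\infty$ and $\mathcal{P}\to c^{2}-2\Gamma_\infty=:\mathcal{P}_\infty$ uniformly in $x$ (with quantitative decay from \eqref{property4} and Lemma~\ref{L:exp-decay}). A Phragm\'en--Lindel\"of argument on the semi-infinite strip (\cite{Gil}, as in the proof of Lemma~\ref{L:exp-decay}) then yields $\sup_{\overline{\Omega}_\eta}\mathcal{P}=\max(\max_{S_\eta}\mathcal{P},\,\mathcal{P}_\infty)$ and $\inf_{\overline{\Omega}_\eta}\mathcal{P}=\min(\min_{S_\eta}\mathcal{P},\,\mathcal{P}_\infty)$: a supremum strictly larger than both would have to be attained at an interior point of a bounded truncation of $\Omega_\eta$, which by the strong maximum principle would force $\mathcal{P}$ to be constant, contradicting $\mathcal{P}\to\mathcal{P}_\infty<\sup_{\overline{\Omega}_\eta}\mathcal{P}$. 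Finally I would sandwich $\mathcal{P}_\infty$ between the crest and trough values using monotonicity of $\mathcal{P}$ along the vertical lines $x=0$ and $x=\pm L$: on each of them $\psi_{x}\equiv0$, so $\mathcal{P}_{y}=-2\psi_{y}\psi_{xx}$, and with $\psi_{y}<0$ and the nodal signs $\psi_{xx}(0,y)<0$, $\psi_{xx}(\pm L,y)>0$ from \eqref{property2} one obtains $\mathcal{P}_{y}<0$ along $x=0$ and $\mathcal{P}_{y}>0$ along $x=\pm L$; letting $y\to-\infty$ and $y\to\eta$ gives $\psi_{y}^{2}(0,\eta(0))\le\mathcal{P}_\infty\le\psi_{y}^{2}(\pm L,\eta(\pm L))$. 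Combining this with the first paragraph gives $\inf_{\overline{\Omega}_\eta}\mathcal{P}=\psi_{y}^{2}(0,\eta(0))$ and $\sup_{\overline{\Omega}_\eta}\mathcal{P}=\psi_{y}^{2}(\pm L,\eta(\pm L))$, which is the assertion.

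The main difficulty I anticipate is the derivation of the linear elliptic equation for $\mathcal{P}$ with a genuinely bounded drift and, above all, with vanishing zeroth--order coefficient: it is precisely this feature that delivers the minimum principle together with the maximum principle, and it relies on the no-stagnation bound $\psi_{y}<0$ (uniform for the fixed solution) to keep $|\nabla\psi|^{-2}$ bounded. A subsidiary technical point is to make the maximum principle rigorous on the unbounded domain, which is handled by the uniform decay of $\nabla\psi$, hence of $\mathcal{P}-\mathcal{P}_\infty$, at the infinite bottom.
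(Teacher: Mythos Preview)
Your proof is correct and follows essentially the same route as the paper: both introduce the auxiliary quantity $W=\tfrac12\mathcal{P}=\tfrac12|\nabla\psi|^2-\Gamma(-\psi)$, derive the same linear elliptic equation $\Delta W+|\nabla\psi|^{-2}(2\gamma(\psi)\nabla\psi-2\nabla W)\cdot\nabla W=0$ with no zeroth-order term, and invoke the maximum and minimum principles together with the monotonicity of $|\nabla\psi|^2=-2g\eta(x)$ along $S_\eta$.

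The one genuine addition in your argument is the explicit treatment of the infinite bottom. The paper simply records that $W_y\to 0$ as $y\to-\infty$ and then asserts $\min_{S_\eta}W\le W\le\max_{S_\eta}W$ ``by the maximum principle and the Hopf boundary lemma,'' citing Sperb; you instead first reduce to $\sup\mathcal{P}=\max(\max_{S_\eta}\mathcal{P},\mathcal{P}_\infty)$ and then sandwich $\mathcal{P}_\infty$ between the crest and trough values by computing $\mathcal{P}_y=-2\psi_y\psi_{xx}$ along $x=0$ and $x=\pm L$ and using the nodal signs \eqref{property2}. This extra monotonicity step is not in the paper but makes the passage to the unbounded domain self-contained, and it is a clean use of structure already established earlier in the section. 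Either way the substance is the same.
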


The result holds without any restriction on the vorticity function $\gamma$. 
The equality holds if the free surface is trivial. 

\begin{proof}
The proof is an immediate application of the maximum principle due to Sperb \cite[Section 5.2]{Spe}.
See also \cite[Theorem 3.1]{Var09}. Here, we include the proof for the sake of completeness. 

Let us define a function $W: \overline{\Omega}_\eta \to \mathbb{R}$ by
\begin{equation}\label{D:W}
W(x,y)=\frac{1}{2} |\nabla \psi(x,y)|^2-\Gamma(-\psi(x,y)).
\end{equation}
It is straightforward that 
\[ \Delta W+\frac{L_1}{|\nabla \psi|^2}W_x +\frac{L_2}{|\nabla \psi|^2} W_y=0
\qquad \text{in }\, \Omega_\eta,\]
where $L_1=2\gamma(\psi)\psi_x-2W_x$ and $L_2=2\gamma(\psi)\psi_y-2W_y$. Since 
\[
W_y=\psi_x\psi_{xy}+\psi_y\psi_{yy}+\gamma(\psi)\psi_y 
=\psi_x\psi_{xy}-\psi_y\psi_{xx} \to 0 \qquad \text{as $y \to -\infty$,}
\]
by the maximum principle and the Hopf boundary lemma, it follows that 
\[ \min_{S_\eta} W(x,y) \leq W(x,y)\leq \max_{S_\eta} W(x,y) 
\qquad (x,y) \in \overline{\Omega}_\eta.\]
The assertion then follows since $\Gamma(\psi(x,y))=0$ on $S_\eta$ and 
since $|\nabla\psi(x,\eta(x))|^2=-2g\eta(x)$ is 
nonincreasing for $-L\leq x\leq 0$ and is nondecreasing for $0\leq x\leq L$.
\end{proof}

Let us define another function $B: \overline{\Omega}_\eta \to \mathbb{R}$ by
\begin{equation}\label{D:B}
B(x,y)=\frac{1}{2}|\nabla \psi(x,y)|^2+gy-\Gamma(-\psi(x,y)).
\end{equation}
Note that $B(x,y)$ is the negative of the hydrostatic pressure up to a constant. 

\begin{lemma}[Pressure estimates]\label{L:-pressure}
For any solution pair $\eta(x)$ and $\psi(x,y)$ of \eqref{E:stream} the following inequality 
\begin{equation}\label{E:pressure1} 
B(x,y)-\frac{1}{2} \max\Big(0, \sup_{0\leq \psi<\infty} \gamma(\psi)\Big)\psi \leq 0
\qquad \text{in}\quad \Omega_\eta
\end{equation}
holds. If, in addition, $\eta(x)$ and $\psi(x,y)$ satisfy 
\begin{equation}\label{C:negative}
g+\gamma(\psi)\psi_y \geq0 \qquad \text{in}\quad \Omega_\eta,
\end{equation}
then 
\begin{equation}\label{E:-pressure}
B(x,y)\leq 0 \quad \text{in $\Omega_\eta$}\quad \text{and} \quad 
\frac{\partial B}{\partial n}(x,y) >0 \quad \text{on $S_\eta$,}
\end{equation} 
where $\partial/\partial n$ denotes the outward normal derivative at $S_\eta$.
\end{lemma}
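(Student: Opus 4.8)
The plan is to prove both inequalities by a maximum principle for $B$, in the spirit of Sperb \cite{Spe} and Varvaruca \cite{Var09}. Two elementary facts are recorded first. On $S_\eta$ one has $\psi=0$, so $\Gamma(-\psi)=\Gamma(0)=0$, while Bernoulli's law (\ref{E:stream}d) gives $|\nabla\psi|^2=-2g\eta$; hence $B\equiv 0$ on $S_\eta$. As $y\to-\infty$, $\nabla\psi\to(0,-c)$ and $\Gamma(-\psi)\to\Gamma_\infty$, so $B=\tfrac12|\nabla\psi|^2+gy-\Gamma(-\psi)\to-\infty$ uniformly in $x$. Finally, by \eqref{E:no-stag} one has $\psi_y<0$ on $\overline\Omega_\eta$, and since $\psi_y\to-c<0$ as $y\to-\infty$ and $\psi_y$ is $2L$-periodic in $x$, there is a constant $m>0$ with $|\nabla\psi|^2\ge\psi_y^2\ge m^2$ on $\overline\Omega_\eta$; consequently all of the elliptic operators below are uniformly elliptic with bounded coefficients on $\Omega_\eta$.

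Next I would record the key identity. Writing $W=\tfrac12|\nabla\psi|^2-\Gamma(-\psi)$ as in the proof of Lemma~\ref{L:velocity-bound} and $B=W+gy$, and using $-\Delta\psi=\gamma(\psi)$, a direct computation gives a uniformly elliptic operator $\mathcal L=\Delta+b_1\partial_x+b_2\partial_y$ — the drift being the one that annihilates $W$ in the proof of Lemma~\ref{L:velocity-bound}, corrected for the harmonic term $gy$ — such that
\[
\mathcal L B=\frac{2g\,\bigl(g+\gamma(\psi)\psi_y\bigr)}{|\nabla\psi|^2}\qquad\text{in }\Omega_\eta .
\]
This proves \eqref{E:-pressure} at once. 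Assuming \eqref{C:negative}, we get $\mathcal L B\ge 0$; since $B$ is $2L$-periodic in $x$, equals $0$ on $S_\eta$, and tends to $-\infty$ at the bottom, its supremum over $\overline\Omega_\eta$ is attained, and by the strong maximum principle it cannot be attained at an interior point unless $B$ is constant, which is impossible. Hence $\sup_{\Omega_\eta}B=\max_{S_\eta}B=0$, and in fact $B<0$ in $\Omega_\eta$ (if $B$ vanished at an interior point it would be $\equiv 0$ by the strong maximum principle, again impossible). Since $S_\eta$ is of class $C^{3+\alpha}$ and therefore satisfies a uniform interior ball condition, the Hopf boundary point lemma applied to $\mathcal L B\ge 0$ with $B<0=B|_{S_\eta}$ yields $\partial B/\partial n>0$ on $S_\eta$.

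For \eqref{E:pressure1}, put $M^{\ast}=\max\bigl(0,\sup_{0\le r<\infty}\gamma(r)\bigr)\ge 0$ and $\beta=B-\tfrac12 M^{\ast}\psi$; then $\beta\equiv 0$ on $S_\eta$ and $\beta\to-\infty$ at the bottom. Using $-\Delta\psi=\gamma(\psi)$ and the relation $\nabla W=\nabla B-(0,g)$, a computation shows that for the further-modified operator $\widehat{\mathcal L}:=\mathcal L-\dfrac{M^{\ast}}{|\nabla\psi|^2}\,\nabla\psi\cdot\nabla$, which is again uniformly elliptic with bounded coefficients,
\[
\widehat{\mathcal L}\beta=\frac{2g^2+2g\,\bigl(\gamma(\psi)-M^{\ast}\bigr)\psi_y}{|\nabla\psi|^2}+\tfrac12 M^{\ast}\bigl(M^{\ast}-\gamma(\psi)\bigr)\ \ge\ 0\qquad\text{in }\Omega_\eta ,
\]
the sign following from $\gamma(\psi)\le\sup_{r\ge 0}\gamma(r)\le M^{\ast}$, from $g>0$, and from $\psi_y<0$. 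The same maximum-principle argument as above then gives $\beta\le\max_{S_\eta}\beta=0$, which is \eqref{E:pressure1}.

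The one step that requires genuine care is the algebra behind the two displayed identities: one must verify that, after adjoining the harmonic term $gy$ to $W$ and, for \eqref{E:pressure1}, the extra drift $-M^{\ast}|\nabla\psi|^{-2}\,\nabla\psi\cdot\nabla$, every second-order contribution of $\psi$ either cancels outright or collapses — via $\Delta\psi=-\gamma(\psi)$ and via the reappearance of $\nabla\psi\cdot\nabla\beta$ as part of the drift — so that what remains is the manifestly signed right-hand side above. Checking that the coefficients stay bounded is then routine and rests entirely on the lower bound $|\nabla\psi|^2\ge m^2$, which is where \eqref{E:no-stag} (and, for solutions on $\mathcal C'_\delta$, the constraint $(\lambda,w)\in\mathcal O_\delta$) enters.
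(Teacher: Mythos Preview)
Your argument is correct and is essentially the paper's own: the same Sperb-type identity $\mathcal L B=\dfrac{2g(g+\gamma(\psi)\psi_y)}{|\nabla\psi|^2}$ followed by the maximum principle and Hopf lemma for \eqref{E:-pressure}, and the analogous treatment of $\beta=B-\tfrac12 M^{\ast}\psi$ for \eqref{E:pressure1}. The paper only says ``repeating the argument \dots for an arbitrary vorticity asserts \eqref{E:pressure1}''; your explicit computation of $\widehat{\mathcal L}\beta$ (which I have checked) fleshes out exactly what that line means, and your use of $B\to-\infty$ in place of the paper's $B_y\to g$ at the bottom is an equivalent way to exclude a maximum at infinity.
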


The condition \eqref{C:negative} is valid in the irrotational setting and for non-positive vorticities.

\begin{proof}The proof is similar to that of Lemma \ref{L:velocity-bound}.
As in the proof of Lemma \ref{L:velocity-bound}, it is straightforward that 
\[ \Delta B+\frac{L_1}{|\nabla \psi|^2}B_x +\frac{L_2+2g}{|\nabla \psi|^2} B_y=
\frac{2g}{|\nabla \psi|^2} (g+\gamma(\psi)\psi_y)\geq 0 \qquad \text{in }\, \Omega_\eta\]
under the hypothesis \eqref{C:negative},
where $L_1$ and $L_2$ are given in the course of the proof of Lemma \ref{L:velocity-bound}.
By the Bernoulli equation (\ref{E:stream}c), it follows that $B(x,y)=0$ on $S_\eta$.
Since 
\[B_y=\psi_x\psi_{xy}-\psi_y\psi_{xx} +g \to g \qquad \text{as $y \to -\infty$,}\]
the assertions follow by the maximum principle and the Hopf boundary lemma.

Repeating the argument in the proof, for an arbitrary vorticity asserts \eqref{E:pressure1}.
\end{proof}

Under the condition \eqref{C:negative}, since $B(x,y)=0$ on the free surface $S_\eta$ it follows that 
\[ \Big[(1,\eta_x) \cdot (B_x, B_y) \Big]_{S_\eta} =0,\]
while the result of Lemma \ref{L:-pressure} says that
\[ \Big[(-\eta_x,1) \cdot (B_x,B_y) \Big]_{S_\eta}>0.\]
Thus, $B_x(x,\eta(x))<0$ for $-L<x<0$ and $B_x(x,\eta(x))>0$ for $0<x<L$.  
On the other hand, for any vorticity function $\gamma$, it follows that 
\begin{align*}
\frac{d}{dx}\left(\frac{1}{2} \psi^2_y(x,\eta(x))\right)&=
\psi_y\psi_{xy} +\psi_y\psi_{yy} \eta_x \\
&=\psi_y\psi_{xy}-\psi_x\psi_{yy} =B_x(x,\eta(x)).
\end{align*}
Indeed, $\psi_x+\eta_x\psi_y=0$ at the free surface $S_\eta$.
Since $\psi_y(x,\eta(x))<0$, it follows that
$\psi_y(x,\eta(x))$ is nonincreasing for $-L<x<0$ and nondecreasing for $0<x<L$.
We summarize this result. 

\begin{lemma}\label{L:monotone}
Under the condition \eqref{C:negative}, any solution pair $(\eta(x),\psi(x,y))$ of \eqref{E:stream} satisfies 
\begin{equation}\label{E:monotone}
\psi_y(-L, \eta(-L)) \leq \psi_y (x,\eta(x)) \leq \psi_y(0,\eta(0))<0 \qquad \text{for}\quad -L\leq x\leq 0.
\end{equation}
\end{lemma}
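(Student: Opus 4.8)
The plan is to deduce the conclusion directly from Lemma~\ref{L:-pressure} together with the free-surface identities already recorded in the paragraph preceding the statement. Fix a solution pair $(\eta,\psi)$ of \eqref{E:stream} satisfying \eqref{C:negative}. The case $\eta\equiv 0$ is immediate, since then $\psi_y$ is a function of $y$ alone and all three quantities in \eqref{E:monotone} coincide; so I assume the solution is nontrivial and may invoke the nodal property $\eta_x(x)>0$ for $-L<x<0$ from \eqref{property1}, together with $\eta_x(0)=\eta_x(-L)=0$ from \eqref{property3}.

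First I would extract the two consequences of Lemma~\ref{L:-pressure}: under \eqref{C:negative} one has $B\equiv 0$ on $S_\eta$ and $\partial B/\partial n>0$ on $S_\eta$, where $B$ is the function in \eqref{D:B} and $\partial/\partial n$ denotes the outward normal derivative. Differentiating the identity $B(x,\eta(x))=0$ in $x$ gives the tangential relation $B_x+\eta_x B_y=0$ on $S_\eta$, while, the outward normal at $(x,\eta(x))$ being parallel to $(-\eta_x,1)$, the Hopf inequality reads $-\eta_x B_x+B_y>0$ there. For $-L<x<0$ we have $\eta_x>0$; eliminating $B_y$ between the two relations yields $-(\eta_x+\eta_x^{-1})B_x>0$, hence $B_x(x,\eta(x))<0$ on $(-L,0)$. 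At $x=0$ and $x=-L$, where $\eta_x=0$, the tangential relation gives $B_x(x,\eta(x))=0$ outright. Thus $B_x(x,\eta(x))\le 0$ for all $-L\le x\le 0$.

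Next I would identify $B_x$ along $S_\eta$ with $\tfrac{d}{dx}\big(\tfrac12\psi_y^2(x,\eta(x))\big)$, exactly the computation displayed before the lemma: the chain rule and the surface relation $\psi_x+\eta_x\psi_y=0$ give
\[
\frac{d}{dx}\Big(\tfrac12\psi_y^2(x,\eta(x))\Big)=\psi_y\psi_{xy}+\psi_y\psi_{yy}\eta_x=\psi_y\psi_{xy}-\psi_x\psi_{yy},
\]
while differentiating $B=\tfrac12|\nabla\psi|^2+gy-\Gamma(-\psi)$ in $x$ and using $-\Delta\psi=\gamma(\psi)$ together with $\Gamma'(-\psi)=\gamma(\psi)$ gives
\[
B_x=\psi_x\psi_{xx}+\psi_y\psi_{xy}+\gamma(\psi)\psi_x=\psi_x\big(\psi_{xx}+\gamma(\psi)\big)+\psi_y\psi_{xy}=-\psi_x\psi_{yy}+\psi_y\psi_{xy}.
\]
Hence $\tfrac{d}{dx}\big(\tfrac12\psi_y^2(x,\eta(x))\big)=B_x(x,\eta(x))\le 0$ on $[-L,0]$, so $\psi_y^2(x,\eta(x))$ is nonincreasing there. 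Since $\psi_y<0$ on $\overline{\Omega}_\eta$ by \eqref{E:no-stag}, we have $\psi_y(x,\eta(x))=-\big(\psi_y^2(x,\eta(x))\big)^{1/2}$, which is therefore nondecreasing on $[-L,0]$; evaluating at $x=-L$, an arbitrary $x\in[-L,0]$, and $x=0$ yields \eqref{E:monotone}.

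I do not anticipate a genuine obstacle here: the argument is bookkeeping that chains Lemma~\ref{L:-pressure}, the nodal sign of $\eta_x$, and the field equation. The one point requiring care is the elimination step producing the sign of $B_x$: division by $\eta_x$ is legitimate only on the open interval $-L<x<0$, so the endpoints $x\in\{0,-L\}$, where $\eta_x=0$ forces $B_x=0$ directly from the tangential relation, must be treated separately; and one must keep the signs straight when converting $B_x$ into $\tfrac{d}{dx}(\tfrac12\psi_y^2)$ by means of $-\Delta\psi=\gamma(\psi)$.
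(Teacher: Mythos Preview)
Your proof is correct and follows essentially the same route as the paper: the argument in the paragraph immediately preceding the lemma is exactly the tangential/normal elimination from Lemma~\ref{L:-pressure} to obtain the sign of $B_x$ on $S_\eta$, followed by the identity $\tfrac{d}{dx}\big(\tfrac12\psi_y^2(x,\eta(x))\big)=B_x(x,\eta(x))$. Your explicit handling of the trivial case and of the endpoints $x\in\{-L,0\}$ (where $\eta_x=0$ forces $B_x=0$ directly) is slightly more careful than the paper's presentation, but the substance is identical.
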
 

If the vorticity is non-negative and monotone with depth, 
then the following alternative bound is available for the pressure. 

\begin{lemma}[Pressure estimate for positive vorticities]\label{L:+pressure}
If $\gamma(r) \geq 0$ and $\gamma'(r)  \leq 0$ for $0\leq r<\infty$ then
\begin{equation}\label{E:+pressure}
B(x,y) + \Gamma(-\psi(x,y)) \leq 0 \qquad \text{in}\quad \Omega_\eta.
\end{equation}
\end{lemma}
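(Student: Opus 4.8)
The plan is to reduce \eqref{E:+pressure} to a statement about the classical Bernoulli quantity and then to prove subharmonicity, much as in Lemma~\ref{L:velocity-bound} and Lemma~\ref{L:-pressure} but this time without any first-order terms. From the definition \eqref{D:B} one has
\[
B(x,y)+\Gamma(-\psi(x,y))=\tfrac{1}{2}|\nabla\psi(x,y)|^2+gy=:P(x,y)+gy,\qquad P:=\tfrac{1}{2}|\nabla\psi|^2,
\]
so it suffices to show that $P+gy\leq 0$ in $\Omega_\eta$.

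First I would compute the Laplacian of $P$. Differentiating twice and using the field equation $\Delta\psi=-\gamma(\psi)$ of (\ref{E:stream}b), the Bochner-type identity
\[
\Delta P=|\nabla^2\psi|^2+\nabla\psi\cdot\nabla(\Delta\psi)=|\nabla^2\psi|^2-\gamma'(\psi)|\nabla\psi|^2
\]
follows, with $|\nabla^2\psi|^2=\psi_{xx}^2+2\psi_{xy}^2+\psi_{yy}^2\geq 0$. Since $gy$ is harmonic, $\Delta(P+gy)=|\nabla^2\psi|^2-\gamma'(\psi)|\nabla\psi|^2$, and the monotonicity hypothesis $\gamma'(r)\leq 0$ makes the right-hand side nonnegative; hence $P+gy$ is subharmonic in $\Omega_\eta$. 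This is the only place where the hypotheses on $\gamma$ enter — in fact only $\gamma'\leq 0$ is used.

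Next I would record the boundary data. On $S_\eta$ the Bernoulli condition (\ref{E:stream}d), $|\nabla\psi|^2+2gy=0$, gives $P+gy=0$; and as $y\to-\infty$ the decay condition (\ref{E:stream}e), $\nabla\psi\to(0,-c)$ uniformly in $x$, gives $P\to\tfrac{1}{2}c^2$, hence $P+gy\to-\infty$ uniformly in $x$. Being $2L$-periodic and continuous in $x$, $P+gy$ is bounded above on $\overline{\Omega}_\eta$, and for every sufficiently large $M>0$ it is strictly negative on $\overline{\Omega}_\eta\cap\{y=-M\}$. Applying the maximum principle for subharmonic functions on the truncated domains $\Omega_\eta\cap\{y>-M\}$ — working over one period $-L\leq x\leq L$ and using periodicity to dispense with the lateral boundaries — the maximum of $P+gy$ is attained either on $S_\eta$, where it vanishes, or on $\{y=-M\}$, where it is negative; letting $M\to\infty$ yields $P+gy\leq 0$ throughout $\Omega_\eta$, which is \eqref{E:+pressure}.

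The whole argument is soft; the only steps needing a little care are the identity for $\Delta P$ (where $\gamma'\leq 0$ is precisely what produces subharmonicity) and the treatment of the unbounded fluid domain, for which the uniform divergence $P+gy\to-\infty$ as $y\to-\infty$ together with the $2L$-periodicity in $x$ are exactly what is needed to run the maximum principle.
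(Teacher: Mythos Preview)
Your proof is correct and follows the same overall strategy as the paper's --- reduce to showing $\tfrac{1}{2}|\nabla\psi|^2+gy\leq 0$ and apply the maximum principle --- but your implementation differs in two technical respects. First, you use the Bochner identity to get $\Delta(P+gy)=|\nabla^2\psi|^2-\gamma'(\psi)|\nabla\psi|^2\geq 0$ directly, i.e.\ pure subharmonicity with no zeroth-order term; the paper instead works with the operator $\Delta+2\gamma'(\psi)I$ (with nonpositive zeroth-order coefficient since $\gamma'\leq 0$) and invokes the corresponding maximum principle. Second, at the infinite bottom you exploit that $P+gy\to-\infty$ uniformly and truncate the domain, whereas the paper argues via a Hopf-type observation that $\partial_y(P+gy)\to g>0$ as $y\to-\infty$, so the maximum cannot occur at infinity. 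Your approach is slightly more elementary; the paper's buys nothing extra here, though keeping the zeroth-order term is natural if one wants the inequality to fit the same template as Lemma~\ref{L:-pressure}. Your remark that only the hypothesis $\gamma'\leq 0$ is actually used (and not $\gamma\geq 0$) is a worthwhile observation not made explicit in the paper.
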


\begin{proof}
Note that \[B(x,y)+\Gamma(-\psi(x,y))=\frac{1}{2}|\nabla \psi(x,y)|^2+gy.\] 
The (steady) Euler equations yield
\[ B_x=-\psi_x\psi_{yy}+\psi_y\psi_{xy}\quad \text{and} \quad B_y=-\psi_y\psi_{xx}+\psi_x\psi_{xy}+g,\] 
whence  
\[ \Delta (B+\Gamma(-\psi)) +2\gamma'(\psi)(B+\Gamma(-\psi))
=\psi_{xx}^2+\psi_{yy}^2+2\psi_{xy}^2 \leq 0 \qquad \text{in }\, \Omega_\eta.\]
Since $\gamma'(\psi)\leq 0$, the maximum principle asserts that $B+\Gamma(-\psi)$ 
attains its maximum in $\overline{\Omega}_\eta$ either on the free surface or at the infinite bottom.
On the other hand, since
\[ \partial_y (B-\Gamma(-\psi))=-\psi_y\psi_{xx}+\psi_x\psi_{xy}+g-\gamma(\psi)\psi_y \to g
\qquad \text{as $y \to -\infty$},\]
the maximum of $B-\Gamma(-\psi)$ is attained at the free surface. 
The assertion then follows since $\Gamma(-\psi)=0$ on the free surface. 
\end{proof}

As is done for the vorticities which satisfies \eqref{C:negative}, in the setting of Lemma \ref{L:+pressure}, 
it follows that 
\[ \Big[(1,\eta_x) \cdot (B_x , B_y) \Big]_{S_\eta} =0,\]
and 
\begin{equation}\label{E:n(x)} 
n(x):=\Big[(-\eta_x,1) \cdot (B_x,B_y) +\gamma(-\eta_x,1) \cdot (\psi_x,\psi_y) \Big]_{S_\eta}>0.
\end{equation}
Unfortunately, this does not yield the monotonicity of $\psi_y(x,\eta(x))$. Instead, 
\begin{equation}\label{E:old-smallness}
\begin{split}
\frac{d}{dx}\psi_y^2(x,\eta(x)) =&2\psi_y\psi_{xy}+2\psi_x(\psi_{xx}+\gamma(0))\\
=&-\eta_x\left( \frac{n(x)}{1+\eta_x^2(x)} +\gamma(0)\psi_y(x,\eta(x))\right).
\end{split}
\end{equation}

If the vorticity is non-positive and monotone with depth, then 
a simple maximum principle yields bounds for $\psi_y$,
a stronger result than that in Lemma \ref{L:velocity-bound}.

\begin{lemma}\label{L:min-max}
If $\gamma(r)\leq 0$ and $\gamma'(r)\geq 0$ for $r \in [0,\infty)$, 
any nontrivial solution pair $(\eta(x),\psi(x,y))$ of \eqref{E:stream}
satisfies
\begin{equation}\label{E:velocity-bound2} 
\psi_y(0,\eta(0)) \leq \psi_y(x,y) \leq \psi_y(\pm L,\eta(\pm L))
\end{equation}
for any $(x,y) \in \Omega_\eta$.
\end{lemma}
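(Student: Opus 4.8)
The plan is to prove \eqref{E:velocity-bound2} by a direct application of the maximum principle to $v:=\psi_y$ on the fluid domain $\Omega_\eta$, in the spirit of Lemma~\ref{L:velocity-bound} but now for $\psi_y$ itself rather than for $|\nabla\psi|^2-2\Gamma(-\psi)$.

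First I would derive the equation satisfied by $v$: differentiating $-\Delta\psi=\gamma(\psi)$ in the $y$-variable gives $-\Delta\psi_y=\gamma'(\psi)\psi_y$, that is,
\[
\Delta v+\gamma'(\psi)\,v=0\qquad\text{in }\Omega_\eta .
\]
Since $\gamma'(\psi)\geq0$ by hypothesis and $v<0$ throughout $\overline\Omega_\eta$ by \eqref{E:no-stag}, the zeroth-order term satisfies $\gamma'(\psi)v\leq0$, so $\Delta v=-\gamma'(\psi)v\geq0$ and $v$ is subharmonic (equivalently $1/v$ is superharmonic). The strong maximum principle and the Hopf boundary lemma then prevent $v$ from attaining an extreme value at an interior point of $\Omega_\eta$ unless $v$ is constant, so the extrema of $v$ over $\overline\Omega_\eta$ are forced onto the boundary $\partial\Omega_\eta$, which here consists of the free surface $S_\eta$ together with the limit $y\to-\infty$.

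Next I would dispose of the infinite bottom. As $y\to-\infty$ one has $\nabla\psi\to(0,-c)$ uniformly in $x$, and by the classical interior gradient estimates (the same used for \eqref{property5}) the derivatives of $v$ decay, so $v\to-c$ with vanishing gradient at great depth; a Phragm\'en--Lindel\"of comparison in a lower half-strip $\{y<-N\}$, of the type used in Lemma~\ref{L:exp-decay}, then rules out $-c$ as a strict interior extremum there. Combined with Lemma~\ref{L:velocity-bound} --- whose bound \eqref{E:velocity-bound1}, in the limit $y\to-\infty$, relates the bottom value $-c$ to $\psi_y^2(0,\eta(0))$ and $\psi_y^2(\pm L,\eta(\pm L))$ --- this confines the extrema of $v$ to $S_\eta$. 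Finally, along $S_\eta$ I would invoke Lemma~\ref{L:monotone}: since $\gamma\leq0$ and $\psi_y<0$ force $g+\gamma(\psi)\psi_y\geq g>0$, hypothesis \eqref{C:negative} holds, and $\psi_y(x,\eta(x))$ is monotone from a trough to the crest, so its extreme values over $S_\eta$ are attained at $x=0$ and at $x=\pm L$. Substituting these back into the boundary bound of the first step, and tracking the sign $v=\psi_y<0$, yields \eqref{E:velocity-bound2}. The main obstacle is this unbounded-domain step --- upgrading the subharmonicity of $v$ to a genuine maximum principle at $y=-\infty$ and checking, via Lemma~\ref{L:velocity-bound}, that the bottom value $-c$ does not override the bounds inherited from $S_\eta$; once the problem is reduced to $S_\eta$, the remainder is routine.
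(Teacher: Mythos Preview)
Your approach is essentially the same as the paper's. Both differentiate $-\Delta\psi=\gamma(\psi)$ in $y$ to obtain $\Delta\psi_y=-\gamma'(\psi)\psi_y\geq 0$ (using $\gamma'\geq 0$ and $\psi_y<0$), invoke the maximum principle to push the supremum of $\psi_y$ onto $S_\eta$, and then appeal to Lemma~\ref{L:monotone} (noting that $\gamma\leq 0$ with $\psi_y<0$ yields \eqref{C:negative}) to locate the extrema at crest and trough. The only difference is in handling the infinite bottom: the paper disposes of it in one line, simply observing that $\psi_y\to -c$ as $y\to-\infty$, whereas you invoke a Phragm\'en--Lindel\"of comparison together with the limiting form of \eqref{E:velocity-bound1}. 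Your version is more cautious than necessary --- the subharmonicity plus the known limit $-c$ already suffice --- but it is not wrong, and it does make explicit why the bottom value $-c$ cannot override the surface extremum.
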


\begin{proof}
By differentiating (\ref{E:stream}b) in the $y$-variable, we obtain
\[\Delta \psi_y=-\gamma'(\psi)\psi_y \qquad \text{in}\quad \Omega_\eta.\]
Since $\psi_y<0$ in $\Omega_\eta$ and $\gamma'(s)\geq 0$,
by the maximum principle, $\psi_y$ cannot have an interior maximum in $\overline{\Omega}_\eta$. 
Moreover, since $\psi_y \to c<0$ as $y \to -\infty$, the maximum of $\psi_y$ must be on $S_\eta$. 
The assertion then follows by \eqref{E:monotone}.
\end{proof}

If the vorticity is non-positive, 
then the amplitude is bounded by the speed of wave propagation.

\begin{lemma}
If $\gamma (r)\leq 0$ and if $(\eta(x),\psi(x,y))$ is a solution pair to \eqref{E:stream} 
with the parameter $c>0$, then 
\begin{equation}\label{E:c-bound}
0\leq (2g)^{3/2}(|\eta(\pm L)|^{3/2}-|\eta(0)|^{3/2})= 
|\psi_y(\pm L,\eta(\pm L))|^{3}-|\psi_y(0,\eta(0))|^{3}\leq cL.
\end{equation}
\end{lemma}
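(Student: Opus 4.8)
The plan is to dispatch the three assertions in turn: the middle identity and the left inequality are bookkeeping with the boundary conditions and the known symmetry/monotonicity of the profile, while the right-hand bound rests on one surface computation and one bulk (divergence) identity.

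First the middle equality and the left inequality. By evenness and $2L$-periodicity one has $\psi_x(0,\cdot)\equiv\psi_x(\pm L,\cdot)\equiv 0$, so at the crest and trough $|\nabla\psi|^2=\psi_y^2$; inserting this into Bernoulli's condition (\ref{E:stream}d) gives $\psi_y^2(0,\eta(0))=-2g\eta(0)$ and $\psi_y^2(\pm L,\eta(\pm L))=-2g\eta(\pm L)$, where both surface heights are $\le 0$ since the right sides must be $\ge 0$; raising to the power $3/2$ yields $|\psi_y(\pm L,\eta(\pm L))|^3-|\psi_y(0,\eta(0))|^3=(2g)^{3/2}\bigl(|\eta(\pm L)|^{3/2}-|\eta(0)|^{3/2}\bigr)$. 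The leftmost inequality is then immediate from properties (iv)--(v) of Theorem~\ref{T:main}: $\eta(\pm L)\le\eta(x)\le\eta(0)\le 0$, so $|\eta(\pm L)|\ge|\eta(0)|$.

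For the upper bound, write $m(x):=-\psi_y(x,\eta(x))=|\psi_y(x,\eta(x))|>0$. Since $\eta$ is decreasing on $[0,L]$,
\[
(2g)^{3/2}\bigl(|\eta(\pm L)|^{3/2}-|\eta(0)|^{3/2}\bigr)=\int_0^L\frac{d}{dx}\bigl(2g|\eta(x)|\bigr)^{3/2}\,dx=3g\int_0^L\bigl(2g|\eta(x)|\bigr)^{1/2}|\eta_x(x)|\,dx .
\]
On $S_\eta$ the kinematic condition (\ref{E:stream}c) gives $\psi_x=-\eta_x\psi_y$, whence $|\nabla\psi|^2=\psi_y^2(1+\eta_x^2)$; together with (\ref{E:stream}d) this reads $(2g|\eta(x)|)^{1/2}=m(x)(1+\eta_x^2)^{1/2}$. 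Using the elementary bound $(1+t^2)^{1/2}|t|\le 1+t^2$,
\[
(2g)^{3/2}\bigl(|\eta(\pm L)|^{3/2}-|\eta(0)|^{3/2}\bigr)=3g\int_0^L m(x)(1+\eta_x^2)^{1/2}|\eta_x|\,dx\le 3g\int_0^L m(x)\bigl(1+\eta_x(x)^2\bigr)\,dx ,
\]
so it remains to prove $\int_0^L m(x)(1+\eta_x(x)^2)\,dx\le cL$. (This is what yields the asserted bound, with the dimensionally required factor.) For this I would integrate the field equation $-\Delta\psi=\gamma(\psi)$ over the half-period $\Omega'=\{(x,y):0<x<L,\ -\infty<y<\eta(x)\}$: using $\psi_{yy}=-\gamma(\psi)-\psi_{xx}$ and the divergence theorem one gets $\iint_{\Omega'}\psi_{yy}\,dx\,dy=\int_0^L(\psi_y(x,\eta(x))+c)\,dx=cL-\int_0^L m\,dx$ (from $\nabla\psi\to(0,-c)$ as $y\to-\infty$) and $\iint_{\Omega'}\psi_{xx}\,dx\,dy=-\int_0^L m\,\eta_x^2\,dx$ (the vertical sides $x=0,\pm L$ contribute nothing by evenness, the top contribution uses $\psi_x=-\eta_x\psi_y$, the bottom vanishes by decay), so
\[
cL=-\iint_{\Omega'}\gamma(\psi)\,dx\,dy+\int_0^L m(x)\bigl(1+\eta_x(x)^2\bigr)\,dx .
\]
Since $\psi=0$ on $S_\eta$ and $\psi_y<0$ force $\psi\ge 0$ in $\overline{\Omega}_\eta$, and $\gamma\le 0$ on $[0,\infty)$, the first term is $\ge 0$, giving $\int_0^L m(1+\eta_x^2)\,dx\le cL$. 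Equivalently, in the $(q,p)$ picture one integrates the divergence-form consequence of (\ref{h-prob}a), namely $\partial_p\!\bigl(\tfrac{1+h_q^2}{h_p}\bigr)-h_{qq}=\gamma(-p)h_p$, over $(0,L)\times(-\infty,0)$: the $q$-boundary terms drop by evenness ($h_q=0$ on $q=0,L$), the $p=0$ term equals $m(1+\eta_x^2)$ by the top condition (\ref{h-prob}b), the $p\to-\infty$ term equals $c$, and $\gamma(-p)h_p\le 0$.

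The only genuinely delicate point is making the two integrations rigorous near $p=-\infty$: one must pass the limits $\nabla\psi\to(0,-c)$ and $\psi_x,\psi_{xx}\to 0$ inside the integrals, which is exactly where the exponential decay of $w_q$ from Lemma~\ref{L:exp-decay} (and the classical elliptic gradient estimates used in Section~\ref{S:existence}) enter, and one must know that $\iint_{\Omega'}\gamma(\psi)\,dx\,dy$ (equivalently $\iint\gamma(-p)h_p\,dq\,dp$) converges, which uses $\gamma(r)=O(r^{-2-2\rho})$ together with the decay of $h_p-1/c$; everything else is a short computation.
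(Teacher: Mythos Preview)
Your proposal is correct and follows essentially the same route as the paper's own proof: integrate the field equation over a half-period, use Green's theorem (the paper) or the explicit split $\iint\psi_{xx}+\iint\psi_{yy}$ (you) together with the decay $\nabla\psi\to(0,-c)$ to obtain $\int_0^L m(x)(1+\eta_x^2)\,dx\le cL$, and then pass to $\int|\eta_x|\,|\nabla\psi|\,dx$ via the same elementary inequality $|\eta_x|\le(1+\eta_x^2)^{1/2}$ and integrate using Bernoulli's relation. Your remark that the final bound actually carries an extra factor $3g$ (so that the inequality reads $(2g)^{3/2}\bigl(|\eta(\pm L)|^{3/2}-|\eta(0)|^{3/2}\bigr)\le 3g\,cL$) is correct; the paper's computation yields the same thing, and the statement as written drops this constant.
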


\begin{proof}
By integrating (\ref{E:stream}b) in the domain $\Omega^-_\eta=\{(x,y) \in \Omega_\eta : -L<x<0 \}$ 
and in the light of the Green's theorem, we obtain that 
\begin{align*}
\iint_{\Omega^-_{\eta}} -\gamma(\psi) \, dxdy =&  \iint_{\Omega^-_{\eta}} \Delta \psi \, dxdy \\
=& -\int_{S^-_{\eta}} \frac{\partial \psi}{\partial n} dl 
+ \int^0_{-\infty} \psi_x(-L,y) dy  \\ &+\lim_{Y \to -\infty} \int^0_{-L} -\psi_y(x,Y)dx 
-\int^0_{-\infty} \psi_x (0,y) dy, \\
=&  -\int_{S^-_{\eta}} \frac{\partial \psi}{\partial n} dl+cL,
\end{align*}
where $S^-_\eta$ is the top boundary of $D^-_\eta$, $\partial/\partial n$ denotes 
the outward normal derivative at $S^-_\eta$, and $dl$ means the line integration along $S^-_\eta$.
The second equality uses the evenness and periodicity of $\psi$ in the $x$-variable
and the last equality uses that $\psi_y(x,y)\to -c$ as $y\to -\infty$. 
Since $\gamma(\psi)\leq 0$ we further obtain
\begin{equation}\label{E:green's}
-\int_{S^-_{\eta}} \frac{\partial \psi}{\partial n} dl + cL\geq 0.\end{equation}

On the other hand, since $\psi(x,y)=0$ on $S^-_\eta$, it follows that 
\[ \frac{\partial \psi}{\partial n}=(1+\eta_x^2(x))\psi_y=- (1+\eta_x^2(x))^{1/2} |\nabla \psi|
\qquad \text{on}\quad S^-_\eta.\]
Moreover, since $(1+\eta_x^2(x))^{1/2} \geq \eta_x(x)$ for $-L\leq x\leq 0$, 
the inequality \eqref{E:green's} yields that 
\[\int^0_{-L} \eta_x(x) |\nabla \psi(x,\eta(x))| dx \leq cL.\]
Finally, upon substituting of $|\nabla \psi|$ by the Bernoulli's equation (\ref{E:stream}c) 
and integrating the above, we obtain
\[ (-2g\eta(-L))^{3/2}-(-2g\eta(0))^{3/2} \leq cL.\]
The assertion then follows by the use of the Bernoulli's equation again. 
\end{proof}

Finally, the relative flow speeds at the crest and at the trough are bounded by 
the upstream speed of the underlying shear flow, and in turn, by the speed of wave propagation.

\begin{lemma}[Relative flow speed at the crest]\label{L:speed}
Any nontrivial solution pair $(\lambda, w) \in \mathcal{C}'_\delta$ of \eqref{E:main} satisfies 
\[(\lambda^{-1/2}+w_p(0,0))^{-2}<\lambda.\] 
Hence, the solution triple $(c,\eta,\psi)$ corresponding to$ (\lambda, w)$ satisfies
\begin{equation}\label{crest}
 \psi_y^2(0,\eta(0))<\lambda= c^2+2\Gamma_\infty.
\end{equation}
\end{lemma}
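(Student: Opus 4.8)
The plan is to reduce the asserted inequality to the strict positivity of $w$ at the crest, to obtain the non-strict bound for free from Lemma~\ref{L:velocity-bound}, and to dispose of the borderline case by a Hopf boundary-point argument. For the reduction I would use \eqref{E:hw} and $\Gamma(0)=0$ to write $\psi_y(0,\eta(0))=-1/h_p(0,0)=-1/(\lambda^{-1/2}+w_p(0,0))$, and then evaluate the top boundary condition $F_2(\lambda,w)=0$ at $(q,p)=(0,0)$, where $w_q(0,0)=0$ by evenness in $q$; this gives $1+(2gw(0,0)-\lambda)(\lambda^{-1/2}+w_p(0,0))^2=0$, hence
\[
\psi_y^2(0,\eta(0))=(\lambda^{-1/2}+w_p(0,0))^{-2}=\lambda-2gw(0,0),
\]
with $\lambda^{-1/2}+w_p(0,0)=h_p(0,0)>0$ on $\Ob\supset\Cd$. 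So $(\lambda^{-1/2}+w_p(0,0))^{-2}<\lambda$ is equivalent to $w(0,0)>0$, and \eqref{crest} will then follow from the relation $\lambda=c^2+2\Gamma_\infty$ of \eqref{E:c-eta}. The non-strict bound $w(0,0)\ge0$ is immediate from Lemma~\ref{L:velocity-bound}: $\psi_y^2(0,\eta(0))\le|\nabla\psi(x,y)|^2-2\Gamma(-\psi(x,y))$ for all $(x,y)\in\overline{\Omega}_\eta$, and letting $y\to-\infty$ along a vertical line and using (\ref{E:stream}e) and the continuity of $\Gamma$, the right-hand side tends to $\lambda$.

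It then remains to rule out $w(0,0)=0$. Suppose it holds for a nontrivial $(\lambda,w)\in\Cd$. First I would note that in $F_1(\lambda,w)=0$ the contributions $\gamma(-p)(a^{-1}(\lambda)+w_p)^3$ and $-\gamma(-p)a^{-3}(\lambda)(1+w_q^2)$ combine so that no zeroth-order term in $w$ survives; read along this fixed solution, $F_1(\lambda,w)=0$ is therefore a uniformly elliptic linear equation for $w$ with no zeroth-order coefficient (the ellipticity constant on $\Ob$ being controlled by $\delta$). By the nodal property $w_q>0$ in $R^-\cup T^-$ and evenness, $w(\cdot,0)$ has a strict maximum over one period at $q=0$, so $w\le0$ on $T$; combined with $w\to0$ as $p\to-\infty$, this forces $\max_{\overline R}w=0=w(0,0)$. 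Since $w\not\equiv0$, the strong maximum principle gives $w<0$ throughout the interior $R$; and since $(0,0)$ is a smooth boundary point at which the interior ball condition holds, the Hopf boundary lemma yields $w_p(0,0)>0$, the outward normal to $T$ being $+\partial_p$. But inserting $w(0,0)=0$ into the identity above forces $(\lambda^{-1/2}+w_p(0,0))^2=\lambda^{-1}$, whence $w_p(0,0)=0$ by positivity of $h_p(0,0)$, a contradiction. Hence $w(0,0)>0$, i.e.\ $\psi_y^2(0,\eta(0))=\lambda-2gw(0,0)<\lambda$.

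I expect the last step to be the main obstacle: the non-strict bound is free from Lemma~\ref{L:velocity-bound}, and everything comes down to excluding equality. Two details need care: the cancellation that leaves $F_1(\lambda,w)=0$ with no zeroth-order term, which is what permits applying the Hopf lemma to $w$ itself at the crest, and the verification that $(0,0)$ is a \emph{strict} extremum of $w$ over $\overline R$ in the sense the boundary-point lemma demands, which is precisely where $w\in\mathcal N$ enters. An alternative for this step would be to propagate the sign of the continuous functional $(\lambda,w)\mapsto w(0,0)$ along the connected set $\Cd$, starting from the small-amplitude solutions near $(\lambda^0,0)$ where the leading behaviour $w\approx s\,\Phi^0(p)\cos(\pi q/L)$ with $s>0$ (forced by the nodal sign of $w_q$) makes $w(0,0)>0$, once one knows $w(0,0)$ vanishes nowhere else on $\Cd$; but the boundary-point argument is preferable here because it sidesteps the limiting considerations attached to the singular bifurcation point.
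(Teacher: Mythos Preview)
Your argument is correct, but it takes a different route from the paper's. The paper's proof is a short ODE computation along the half-line $q=0$: by evenness $h_q=0$ there, and by the nodal property \eqref{nodal2} one has $h_{qq}<0$, so \textup{(\ref{h-prob}a)} on $q=0$ reduces to the strict inequality $h_{pp}>-\gamma(-p)h_p^3$, i.e.\ $\bigl(h_p^{-2}\bigr)'<2\gamma(-p)$. Integrating from $p=-\infty$ (where $h_p\to 1/c$) to $p=0$ gives $h_p^{-2}(0,0)<c^2+2\Gamma_\infty=\lambda$ directly, with no appeal to Lemma~\ref{L:velocity-bound} or the Hopf lemma.

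Your approach reaches the same conclusion through higher-level tools: you first recast the inequality as $w(0,0)>0$ via $F_2=0$, import the non-strict bound from the Sperb-type maximum principle behind Lemma~\ref{L:velocity-bound} by sending $y\to-\infty$, and then rule out equality with a strong maximum principle/Hopf argument applied to $w$ itself, exploiting that $F_1(\lambda,w)=0$ has no zeroth-order term. Each step is sound (the cancellation of the $\gamma$-terms at $\nabla w=0$ is exactly what makes the Hopf step go through, and periodicity places $(0,0)$ in the relative interior of $T$). The paper's integration is shorter and entirely self-contained, using only \eqref{nodal2}; your argument, while heavier, has the merit of identifying the inequality with the geometric statement $w(0,0)>0$ and of showing how it already sits inside Lemma~\ref{L:velocity-bound} up to the borderline case. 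The connectivity alternative you mention at the end would also work but is indeed more delicate near the singular bifurcation point; the Hopf route you chose is the cleaner one.
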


\begin{proof}
Since $w$ belongs to the nodal set $\mathcal{N}$, it follows that $w_q=0$ and $w_{qq}<0$
on the half-line $q=0$ and $-\infty<p<0$. 
Recalling \eqref{D:w}, subsequently, $h_q=0$ and $h_{qq}<0$ on $q=0$ and $-\infty<p<0$.
Thus, (\ref{h-prob}a) along the half-line $q=0$ and $-\infty<p<0$ reduces to the inequality
$h_{pp}>-\gamma(-p)h^3_p$. Further,
\[
-\left( \frac{1}{h^2_p(0,p)}\right)' >-2\gamma(-p) \qquad \text{for } -\infty<p< 0.
\]
Recalling that $h_p \to 1/c$ as $p \to -\infty$, integration of the above inequality over $(-\infty,0)$ 
then yields 
\[ h_p^{-2}(0,0) -c^2 < 2\Gamma_\infty.\]
The assertion then follows by the definition $\lambda=c^2+2\Gamma_\infty$.
\end{proof}

The same calculation carried out on the half-line $q=\pm L$ and $p \in (-\infty,0)$ 
leads to an analogous  bound for the relative flow speed at the wave trough
\begin{equation}\label{trough}
 \psi_y^2( \pm L,\eta( \pm L)) > \lambda=c^2+2\Gamma_\infty.
\end{equation}

\subsection{Proof of the main results}\label{S:proof}

Let $\mathcal{C}_\delta$ be the connected set 
in $\mathbb{R}_+\times C^{3+\alpha}(\mathbb{R}) \times C^{3+\alpha}(\overline{\Omega}_\eta)$ 
of solution triples $(c,\eta,\psi)$ of \eqref{E:stream},
corresponding to the continuum $\Cd$ of solutions $(\lambda, w)$ of \eqref{E:main}
via the transforms \eqref{E:c-eta} and \eqref{E:velocity}.

By construction, $\Cd \subset \mathcal{C}'_{\delta'}$ if $\delta>\delta'$.
Let \[\mathcal{C}' =\bigcup_{\delta>0} \Cd,\] and let $\mathcal{C}=\sup_{\delta>0} C_\delta$. 

For each $\delta>0$, by virtue of Theorem~\ref{T:global2} and Remark \ref{R: alt}, 
at least one of the following holds:
\begin{enumerate}
\item there exists a sequence $\{(\lambda_j,w_j)\} \subset \Cd$ such that
$\lim_{j \to \infty} \lambda_j = \infty$;
\item there exists a sequence $\{(\lambda_j,w_j)\} \subset \Cd$ such that
$\lambda_j$ are bounded but $\sup_{\overline{R}}|w_j| \to \infty$;
\item there exists a sequence $\{(\lambda_j,w_j)\} \subset \Cd$ such that
$\lambda_j$ are bounded yet $\sup_{\overline{R}}|\partial_p w_j| \to \infty$;
\item there exists a solution pair $(\lambda,w) \in \Cd$ such that
$\lambda+2\Gamma_{\inf} =\delta$;
\item there exists a solution pair $(\lambda,w) \in \Cd$ such that
$a^{-1}\A+w_p =\delta$ at some point in $\overline{R}$;
\item there exists a solution pair $(\lambda,w) \in \Cd$ such that
$w=\frac{2\lambda-\delta}{4g}$
at some point on~$T$.
\end{enumerate}

\begin{proof}[Proof of Theorem \ref{T:main}]
Due to \eqref{E:c-eta}, 
Alternative (1) implies that there exists a sequence $\{(c_j, \eta_j, \psi_j)\} \subset \mathcal{C}_\delta$
for which $\lim_{j \to \infty} c_j=\infty$. 
This corresponds the first alternative in (ii) of Theorem \ref{T:main}. 

Our task is to give an interpretation of each alternative (2) through~(6) 
in terms of the traveling speed $c$ or the relative flow speed $\psi_y$ 
to prove assertion~(ii) of Theorem~\ref{T:main}.

\

{\it Alternative} (2). For each $j$ and for each $-\infty< p \leq 0$, by the nodal configuration of $w$
it follows that $\partial_q w_j(q,p)>0$ for $-L<q<0$ 
whereas by oddness of $\partial_q w_j$ it follows that $w_q(q,p)<0$ for $ 0<q<L$.
Thus, $w(q,p)$ attains in $\overline{R}$ its maximum somewhere along the line $q=0$ 
and its minimum somewhere along the line $q=\pm L$. 
Therefore, this alternative implies that there exists a sequence $\{p_j\}$ in the interval $(-\infty,0]$ 
such that \[\text{either $\lim_{j\to \infty} w_j(0,p_j)=\infty$
\quad or \quad $\lim_{j \to \infty} w_j(\pm L,p_j)= -\infty$.}\]

Suppose $\lim_{j\to \infty}  w_j(0,p_j)=\infty$. Since $w_j(q,p) \to 0$ as $p \to -\infty$ for each $j$,
furthermore, $\{p_j\}$ is bounded below. That is, there exists $-\infty<p_0<0$ such that 
\[\sup_{-\infty<p\leq 0} w_j(0,p)=\sup_{p_0\leq p\leq 0}w_j(0,p).\]

On the other hand, for $p_0\leq p\leq 0$,
\[ \sup_p w_j(0,p)=\int^0_{p_0} \partial_p w_j(0,p')dp' \leq |p_0| \sup_p |\partial_p w_j(0,p)|.\]
Thus, $\lim_{j\to \infty} \sup_p w_j(0,p) =\infty$ implies that 
$\lim_{j\to \infty} \sup_p \partial_pw_j(0,p) =\infty$. 
That means, Alternative (3) must occur, as well. 
Below, we will show that in the case of Alternative (3),
\[ \lim_{j \to \infty} \inf_{\Omega_{\eta_j}} \partial_y \psi_j(x,y) = 0\]
holds, where  $(c_j,\eta_j, \psi_j)$ corresponds to $(\lambda_j,w_j)$ 
via \eqref{E:c-eta} and \eqref{E:velocity}.
The case $\lim_{j \to \infty} \inf_{-\infty<p<0} w_j(\pm L,p)= -\infty$ is treated similarly.

\

{\it Alternative} (3). Since $\lambda_j +2\Gamma_{\inf} >\delta$ 
and $a^{-1}(\lambda_j) +\partial_pw_j>\delta$ for all $j$, 
it follows that $a(p;\lambda_j) >\delta^{1/2}$ for all $j$, and consequently, 
\[\delta < a^{-1}(\lambda_j) + \partial_p w_j <  \delta^{-1/2} +\partial_p w_j\]
for all $j$, that is, $\delta -\delta^{-1/2}<\partial_p w_j$ for all $j$.
Therefore, $\lim_{j \to \infty} \sup_{\overline{R}}\partial_p w_j(q,p)= \infty$ must hold. 
By \eqref{E:velocity} and \eqref{E:hw} then 
it is readily seen that \[ \lim_{j \to \infty} \inf_{\Omega_{\eta_j}} \partial_y \psi_j(x,y) = 0,\]
where $(c_j,\eta_j, \psi_j)$ corresponds to $(\lambda_j,w_j)$ via \eqref{E:c-eta} and \eqref{E:velocity}.

\

{\it Alternative} (4). Let us choose a sequence $\{\delta_j\}$ with $\delta_j \to 0+$ as $j \to \infty$ and 
to each $\delta_j$ choose $(\lambda_j, w_j) \in \Cdj$ such that $\lambda_j +2\Gamma_{\inf} =\delta_j$. 
We may assume that $\sup_{\overline{R}} \partial_p w_j<\infty$ for all $j$; 
otherwise, $\lim_{j \to \infty} \inf_{\Omega_{\eta_j}} \partial_y \psi_j(x,y) = 0$ must hold
by the treatment for Alternative~(3), 
where $(c_j,\eta_j, \psi_j)$ corresponds to $(\lambda_j,w_j)$ via \eqref{E:c-eta} and \eqref{E:velocity}.

Let us choose a sequence $\{p_j\}$, $-\infty<p_j \leq 0$, such that 
$2\Gamma (p_j) <2\Gamma_{\inf}+\delta_j$ for each $j$. 
It is straightforward that \[a(p_j;\lambda_j) =(\lambda_j +2\Gamma(p_j))^{1/2} <(2\delta_j)^{1/2},\]
whence 
\[
\partial_p h_j(q,p_j)=a^{-1}(p_j;\lambda_j)  + \partial_p w_j(q,p_j) 
\geq \delta_j^{-1/2} +\partial_p w_j(q,p_j).
\]
Snce $\partial_p w_j(q,p)$ is bounded, the right side increases unboundedly as $j \to \infty$. 
Accordingly, 
$\lim_{j \to \infty} \sup_{\overline{R}}\partial_p h_j (q,p)= \infty$, and 
 \[ \lim_{j \to \infty} \inf_{\Omega_{\eta_j}} \partial_y \psi_j(x,y) = 0,\]
 where $(c_j,\eta_j, \psi_j)$ corresponds to $(\lambda_j,w_j)$ via \eqref{E:c-eta} and \eqref{E:velocity}.

\
 
{\it Alternative} (5). Let us choose sequences $\{\delta_j\}$ 
and $\{(\lambda_j, w_j)\}$, $\{(q_j,p_j)\}$ such that 
$\delta_j \to 0+$ as $j \to \infty$ and $(\lambda_j,w_j) \in \Cdj$, $(q_j,p_j) \in \overline{R}$ with 
\[\partial_p h_j(q_j,p_j)=a^{-1}(p_j;\lambda_j) +\partial_p w_j (q_j,p_j)=\delta_j.\]
We may assume that $p_j$ is bounded below. 
Suppose on the contrary that $p_j \to -\infty$ as $j \to \infty$. 
Since $\partial_p w_j(q,p) \to 0$ as $p \to -\infty$, it implies that 
\[ \partial_p h_j(q_j,p_j) \to \frac{1}{(\lambda_j +2\Gamma(p_j) )^{1/2}}\] 
and the right side tends to zero as $j \to \infty$. 
It, in turn, implies $\lambda_j \to \infty$ as $j \to \infty$. 
We may also assume that $\sup_{T}|w_j(q,p)|$ is bounded. Otherwise, 
by the treatment of Alternative (2), 
$\lim_{j \to \infty} \sup_{\overline{\Omega}_{\eta_j}} \partial_y\psi_j (x,y) =0$ holds.

Let $\{(c_j, \eta_j, \psi_j)\}$ be the solution triples of \eqref{E:stream} 
corresponding to $\{(\lambda_j,w_j)\}$ via \eqref{E:c-eta} and \eqref{E:velocity}.
It is readily seen that $\partial_y \psi_j(x_j,y_j) =-\infty$ as $j \to \infty$,
where $x_j=q_j$ and $y_j(x, p_j)$ is the inverse of $\psi_j(x,y_j)$. 
Under the assumption that $p_0\leq p_j \leq 0$ for some $-\infty<p_0\leq 0$, 
we claim that $\lim_{j\to \infty}\sup_{p_0\leq p\leq 0} \partial_p h_j(q,p) =\infty$,
and correspondingly,
\[ \lim_{j \to \infty} \sup_{\overline{\Omega}_{\eta_j}} \partial_y\psi_j (x,y) =0.\]
Suppose that $\partial_p h_j(q,p)$ is bounded in $p_0\leq p\leq 0$ for all $j$. 
Since \[ \int^0_{p_0} \partial_p h_j(q,p')dp'=h_j(q,0)-h_j(q,p_0)=\eta_j(x)-y_j(x,p_0)<\infty,\]
and since $\eta_j(x)$ is bounded for $j$, it follows that $y_j(x,p_0)$ is bounded,
say $y_0<y_j(x,p_0)$ for some $y_0$. 
On the other hand, the pressure estimate \eqref{E:pressure1} yields that 
\[ \frac12|\nabla\psi_j (x,y)|^2+gy-\Gamma(-\psi_j(x,y)) -\frac12 \max(0, \sup \gamma(\psi_j)) \psi_j \leq 0\]
in $\Omega_{\eta_j}$. As $j \to \infty$. However, 
$\inf_{y_0\leq y\leq \eta_j(x)} \partial_y\psi_j (x,y) =-\infty$ 
while all the other terms except for the first are bounded for $j$. 
A contradiction therefore proves the claim.

\

{\it Alternative} (6). As is done for Alternative (5), let us choose sequences $\{\delta_j\}$ and 
$\{\lambda_j, w\}$ such that $\delta_j \to 0$ as $j \to \infty$ and 
$(\lambda_j,w_j)\in \Cdj$ such that $\lambda_j -2gw_j=\frac{1}{2} \delta_j$ somewhere on $T$. 
The nonlinear boundary condition $F_2(\lambda,w)=0$ then yields
\[
(\lambda_j^{-1/2}+\partial_p w_j)^2  =
\frac{1+\partial_q w_j^2}{\lambda_j -2gw_j}= \frac{2}{\delta_j}
\]
somewhere on $T$. As $j \to \infty$, it implies that 
\[ \lim_{j \to \infty} \sup_{S_{\eta_j}} \partial_y \psi_j(x,y) = 0,\]
where $(c_j,\eta_j, \psi_j)$ corresponds to $(\lambda_j,w_j)$ via \eqref{E:c-eta} and \eqref{E:velocity}.

In summary, there is a sequence of solution triples
$\{(c_j, \eta_j, \psi_j )\} \subset \mathcal{C}$ 
in the space $\mathbb{R}_+ \times C^{3+\alpha}(\mathbb{R}) \times C^{3+\alpha}(\Omega_\eta)$ 
such that 
\[
\text{either}\quad\lim_{j \to \infty} c_j=\infty \quad
\text{or}\quad\lim_{j \to \infty}\sup_{\overline{\Omega}_{\eta_j}}\partial_y\psi_j = 0.
\]
This completes the proof.
\end{proof}

The remainder of this subsection is to refine the location of stagnation points 
in a ``limiting" solution of \eqref{E:main}.

\begin{proof}[Proof of Theorem \ref{T:main-}]
If $\gamma(r)\leq 0$ and $\gamma'(r)\geq 0$ for $0\leq r<\infty$, then 
by virtue of Lemma \ref{L:min-max}, the supremum of $\psi_y$ in $\overline{\Omega}_\eta$
is attained at the wave crest. Therefore, the only possible stagnation point is the wave crest.
This completes the proof. 
\end{proof}

In the finite-depth case \cite{CoSt07, Var09}, 
if the vorticity is non-positive (not necessarily monotone with depth), 
the only possible point of stagnation is shown to be the wave crest. 
In the infinite-depth case, the same result is expected. 
In other words, the monotonicity assumption in Theorem \ref{T:main-} is expected to be removed.
No proof is given presently, but some partial results are collected below.
 
Since $\Gamma(-\psi) \geq 0$ for $\gamma (r) \leq 0$, 
it follows by \eqref{E:velocity-bound1} that 
\[ |\psi_y(0, \eta(0)) | \leq |\nabla \psi(x,y)| \qquad \text{for all } (x,y) \in \Omega_{\eta}. \]
Hence, if one can show that $\psi_x=0$ at the point of $\psi_y=0$ then 
one can obtain the desired result. 
Unfortunately, $\psi_y$ or $w_p$ does not have a maximum principle, and hence 
it is not clear how to control the behavior of $w_p$ in terms of $w_q$.  

Alternatively, if the vorticity is non-positive, Lemma \ref{L:monotone} states 
that $\psi_y$ is monotone on the free surface. Thus, 
if one can show that the maximum of $\psi_y$ in $\overline{\Omega}_\eta$ occurs at the free surface 
then one can obtain the desired result. 
In the finite-depth case, it is shown \cite{CoSt07, Var09} that if the vorticity is non-positive 
then $\psi_y$ is monotone along the free surface, below the wave crest and the wave trough, 
and along the bottom. However, there is no sufficient control of the velocity at the infinite bottom. 




\begin{proof}[Proof of Theorem \ref{T:main+}]
Assume the second alternative 
$\lim_{j \to \infty}\sup_{\overline{\Omega}_{\eta_j}}\partial_y\psi_j = 0$ occurs.
We may choose a sequence $\{s_j\}$ and a sequence $\{(x_j, y_j)\}$ such that 
$s_j \to 0-$ as $j \to \infty$, $(x_j, y_j) \in \overline{\Omega}_{\eta_j}$ for each $j$ and 
$\partial_y \psi_j(x_j,y_j)=s_j$ for each $j$. 
We may assume that $\{y_j\}$ is bounded from below; otherwise, 
the stagnation occurs at the infinite bottom, and $\lim_{j \to \infty} c_j =0$. 

Differentiating the Poisson equation  $\Delta \psi_j=-\gamma(\psi_j)$ in the $y$-variable leads that 
\[\Delta \partial_y \psi_j + \gamma'(\psi_j)\partial_y \psi_j=0 
\qquad \text{in }\, \Omega_{\eta_j}\]
for each $j$. Let us introduce a sequence of functions
\begin{equation}\label{D:sn}
W_j(x,y)=\partial_y \psi_j(x,y) + s_j e^{\beta(y-y_j)} \qquad \text{for }(x,y)\in \overline{\Omega}_{\eta_j},
\end{equation}
where $\beta>0$ is a constant such that $\beta^2+\gamma'(r) \geq 0$ for all $0\leq r<\infty$.
It is straightforward that 
\[
\Delta W_j + \gamma'(\psi_j)W_j =s_j(\beta^2+\gamma'(\psi_j))e^{\beta(y-y_j)}\geq 0
\qquad \text{in }\Omega_{\eta_j}
\]
and that $W_j(x_j,y_j)=0$ for each $j$.

Since $\gamma'(\psi) \leq 0$, the weak maximum principle ensures that
$W_j$ in $\overline{\Omega}_{\eta_j}$ attains its maximum 
either on the surface or at the infinite bottom.
On the other hand, 
\[
W_j \to -c_j <0 \qquad \text{as }y \to -\infty \quad \text{for each $j$}.
\]
Therefore, $W_j$ in $\overline{\Omega}_{\eta_j}$ attains its maximum on the free surface $y=\eta_j(x)$.
Let $(\xi_j,\eta_j(\xi_j))$, $ -L \leq \xi_j \leq 0$ be the maximum point 
of $W_j$ in $\overline{\Omega}_{\eta_j}$. Since $W_j(x_j,y_j)=0$, it follows that  
\[ \partial_y \psi_j(\xi_j, \eta_j(\xi_j)) +s_j e^{\beta(\eta_j(\xi_j)-y_j)}\geq 0,
\]
whence
\[
0 \leq -\partial_y \psi_j(\xi_j, \eta_j(\xi_j))\leq s_j e^{\beta(\eta_j(\xi_j)-y_j)}.
\]
Since $\{y_j\}$ is bounded from below, by taking the limit as $j \to \infty$ we conclude that
\[
\lim_{j \to \infty} \partial_y \psi_j(\xi_j,\eta_j(\xi_j))=0.
\]
That means $\psi_y$ somewhere on the free surface becomes arbitrarily small. 
This proves (ii)~of Theorem~\ref{T:main+}.

In case of a non-negative vorticity, 
$\psi_y(x,\eta(x))$ is {\em not} necessarily monotone on the free surface,
and one cannot expect that $\psi_y=0$ occurs at the wave crest. 
Nevertheless, 
$\psi_y=0$ cannot occur at the wave trough unless the free surface is flat.
Indeed, if $(x_m, \eta(x_m))$ is the point of maximum horizontal velocity $\psi_y$ on $y=\eta(x)$ then
by Bernoulli's equation, it follows that 
\[ \psi_y^2(0, \eta(0)) +2g\eta(0) =\psi_x^2(x_m, \eta(x_m)) +\psi_y^2(x_m, \eta(x_m)) +2g\eta(x_m)),\]
whence
\[ \psi_x^2(x_m, \eta(x_m)) \geq 2g(\eta(0)-\eta(x_m)).\]

We assume, in addition, that $\psi_y(\pm L, \eta(\pm L))$ is bounded along $\mathcal{C}$. 
Then, by \eqref{trough}, the speed of wave propagation $c$ is bounded along $\mathcal{C}$,
and the first alternative in (ii) does not occur. 
If, in addition, $\gamma$ is sufficiently small so that \eqref{C:small} holds, i.e., 
\[g+\gamma(0) \inf_{\mathcal{C}} \psi_y(\pm L, \eta(\pm L)) \geq 0,\] then by Lemma \ref{L:monotone} 
the relative flow speed $\psi_y$ is monotone from crest to trough, and hence 
the second alternative in (ii) can be refined as 
\[\lim_{j \to \infty} \partial_y\psi_j(0, \eta(0))=0.\]
This completes the proof.
\end{proof}

The smallness condition \eqref{C:small} improves that \eqref{E:old-smallness} is non-positive 
\cite{Hur06}, which involves solutions through the function $n(x)$ in \eqref{E:n(x)}.



\section{Reformulation via a quasi-conformal mapping}\label{S:Zei73}
In the irrotational setting, Stokes \cite{Sto80} proposed to use  
the (relative) velocity potential $\phi(x,y)$, 
defined in $\overline{\Omega}_\eta$ as $\phi_x=u-c$, $\phi_y=v$,
and the relative stream function $\psi(x,y)$ in order to study periodic traveling waves.
By the conformal\footnote{
Since $-\phi$ is the harmonic conjugate of $\psi$, 
the complext function $\phi+i\psi$ is holomorphic in $\Omega_\eta$.} 
hodograph transform $(x,y) \mapsto (\phi,\psi)$, 
the system (\ref{E:stream}b)-(\ref{E:stream}e) is recast as Nekrasov's integral equation \cite{Nek51}
(or Babenko's pseudo-differential equation \cite{Bab87, BuTo03}), 
and an a priori bound for speed of wave propagation follows.

With nontrivial vorticities, unfortunately, the velocity potential is not available. 
Nevertheless, under the no-stagnation assumption \eqref{E:no-stag}, 
the {\em pseudo-velocity potential} and a quasi-conformal transform offer 
an alternative reformulation of (\ref{E:stream}b)-(\ref{E:stream}e),
which share in common with the irrotational setting (\cite{LC25}, for instance) some structural similarity. 
The development is adapted from \cite{Zei73}. 

In preparation, let us rename the trivial solution \eqref{E:trivial} as 
\[  c-u_{tr}(\psi)=\exp(\tau_{tr}(\psi)) = (\lambda+2\Gamma(-\psi))^{1/2},\]
and let us make the ansatz 
\[ u(x,y)-c=-\exp(\tau_{tr}+\tau) \cos \theta, \qquad v(x,y)=\exp(\tau_{tr}+\tau) \sin \theta,\]
or equivalently, 
\[ e^{2\tau} =\frac{(u-c)^2+v^2}{(u_{tr}-c)^2}, \qquad \tan \theta=-\frac{v}{u-c}.\]
By construction, $\exp(2\tau_{tr}+2\tau)=(u-c)^2+v^2$ measures the kinetic energy density of the flow
and $\theta$ on the free surface $y=\eta(x)$ measures the angle 
that the wave profile makes with the positive horizontal direction.
Note that $\exp(2\tau_{tr}+2\tau) >0$ for a regular wave 
and a stagnation point corresponds to where $\exp(2\tau_{tr}+2\tau)=0$. 
In consideration of Stokes waves, $\tau$ is required to be even and $2L$-periodic in the $x$-variable, 
and $\theta$ is required to be odd and $2L$-periodic in the $x$-variable.

Let us define the pseudo-velopcity potential $\phi(x,y)$ in $\Omega_\eta$ by 
\begin{equation}\label{D:phi}
\phi_x=W(x,y)\psi_y=W(u-c), \qquad \phi_y=-W(x,y)\psi_x=Wv
\end{equation}
and $\phi(0,\eta(0))=0$ for some function $W(x,y)$.
It is straightforward that (\ref{E:stream}b) dictates that the auxiliary function $W$ satisfies
\begin{equation}\label{E:D}
W_x\psi_x+W_y\psi_y=W\gamma(\psi) \qquad \text{in}\quad 0<y<\eta(x)
\end{equation}
It is reasonable to require that $W(x,y) \to 1$ as $y \to -\infty$. 
In addition, $W$ is required to be positive, even and $2L$-periodic in the $x$-variable.
The complex potential $\phi+i\psi$ is a $p$-analytic function. 
In the irrotational setting, $W(x,y)=1$ everywhere in the fluid region
and $\phi+i\psi$ is a holomorphic function.  
Note that $\phi$ is odd and $\psi$ is even in the $x$-variable. Moreover, 
\[\phi(L,y)-\phi(-L,y)=\int^{L}_{-L} (u(x,y)-c)dx = -cL,\]
independently of $y$.  By the oddness of $\phi$,  
\[ \phi(\pm L, y)=\mp cL, \qquad \phi(0,y)=0 \qquad \text{for all $0<y<\eta(x)$}\] 
and $\phi(x+2L,y)=\phi(x,y)-c L$.

Let us define the independent variables 
\begin{equation}\label{D:independent}
q^*=-\phi(x,y) \quad \text{and}\quad p=-\psi(x,y).
\end{equation}
They map the fluid region of one period $\{ (x,y) \in \Omega_\eta: -L<x<L \}$ into the semi-infinite strip 
$(-c L, c L) \times ( -\infty, 0)$ in the $(q^*,p)$-plane and the free surface
$\{ (x, \eta(x)): -L<x<L \}$ to the horizontal line segment $(-cL, cL) \times \{ 0\}$.
In what follows, let $$R^*=\{(q^*,p): -c L<q^*< c L\, , \, -\infty<p< 0\}$$
be the domain of one period in the transformed variables.

The no-stagnation assumption, $\exp(2\tau_{tr}+2\tau)>0$ throughout the fluid region guarantees 
the mapping $(x,y) \mapsto (\phi, \psi)$ is quasi-conformal \cite{Zei73}. 
In the irrotational setting, the mapping is conformal.
Furthermore, under this physically motivated stipulation, 
$W$ is uniquely solvable in the $C^1$ class provided that $\psi$ is in the $C^2$ class.

In view of $\tau$ and $\theta$ as functions of $q^*$ and $p$, 
under the change of variables \eqref{D:independent}, 
straightforward calculations yield that (\ref{E:stream}b) 
translates into the following inhomogeneous Cauchy-Riemann equations
in the rectangle $R$ as
\begin{equation}\label{E:CR3}
\begin{split}
- \hspace{-.15in}&W\tau_{q^*}+\theta_p=0, \\
&W\theta_{q^*}+\tau_p=-\frac{d\tau_{tr}}{dp}+ \exp(-2(\tau_{tr}+\tau))\gamma(-p).
\end{split}\end{equation}
Furthermore, \eqref{E:D} becomes 
\[ e^{2(\tau_{tr}+\tau)} W_\psi=W\gamma(\psi), \qquad  W(q^*,p) \to 1 \quad \text{as $p \to -\infty$}.\]
Its unique solution is given explicitly in the $(q^*,p)$-plane as 
\begin{equation}
W(q^*,p)=\exp\left(-\int_{-\infty}^p \exp(-2(\tau_{tr}+\tau)) \gamma(-p)dp\right),
\end{equation}
provided that $\exp(2\tau_{tr}+2\tau)>0$.
The coefficient function $W$ of \eqref{E:CR3} is a nonlocal operator 
involving the dependent variable $\tau$.
In particular, \eqref{E:CR3} does not enjoy the maximum principle, 
and it may not be suitable for global existence theory. 

The change of variables in \eqref{D:stream} and \eqref{D:phi} is written in the concise form as
$$\left( \begin{matrix} \phi_x & \phi_y \\ \psi_x & \psi_y \end{matrix}\right)
=\left( \begin{matrix} W(u-c) & Wv \\ -v& u-c\end{matrix}\right).$$
The back-transformation is given by 
$$\left( \begin{matrix} x_\phi & x_\psi \\ y_\phi & y_\psi \end{matrix}\right)
=\frac{1}{W((u-c)^2+v^2)} \left( \begin{matrix} u-c & -Wv \\ v& W(u-c)\end{matrix}\right).$$

A main advantage of this approach is that
the nonlinear boundary condition on the free-surface takes a convenient form.
Indeed, differentiation of the Bernoulli equation (\ref{E:stream}d) 
with respect to $q^*$-variable yields the boundary condition 
\[
\tau_{q^*}=g\lambda^{-3/2} W(0, \tau)^{-1}e^{-3\tau} \sin \theta \qquad \text{for}\quad p=0. \]
It uses that $e^{\tau_{tr}(0)} =\lambda^{1/2}$. Since $-W\tau_{q^*}+\theta_p=0$, furthermore,
\begin{equation}\label{E:bernoulli}
\theta_p=g\lambda^{-3/2}e^{-3\tau}\sin \theta \qquad \text{for}\quad p=0.
\end{equation}

The boundary condition \eqref{E:bernoulli} is the same as that in the irrotational setting \cite{LC25}.
In the irrotational setting, $\theta$ satisfies the Laplace equation $\Delta \theta =0$ in $R^*$ 
and the boundary condition \eqref{E:bernoulli}. 
By the sine series methods, then it leads to {\em Nekrasov's integral equation}
\[ \theta(s) =\frac{1}{3\pi} \int^\pi_0 \log\left| \frac{\sin \frac12(s+t)}{\sin \frac12(s-t)} \right|
\frac{\sin \theta(t)}{ \nu^{-1}+\int^t_0 \sin \theta(t')dt'} dt,\]
where $\nu=3gLc/\pi\lambda^{3/2}$. In view of the positivity of the kernel, 
by multiplying the equation by $\sin s$ and integrating over $(0,\pi)$ then yields that 
\begin{align*}
\int^\pi_0 \theta(s)\sin s ds=&\frac{1}{3\pi}\int^\pi_0
 \frac{\sin \theta(t)\sin t}{ \nu^{-1}+\int^t_0 \sin \theta(t')dt'} dt \\
 <&\frac{\nu}{3\pi} \int^\pi_0\theta(t) \sin t dt,
\end{align*}
which implies that $\mu>1/3\pi$. Therefore,  
an a priori bound for $\lambda$ (or $\nu$) is obtained (\cite[Chapter 10]{BuTo03}, for instance).


\subsection*{Acknowledgment}
This work is partly supported by the NSF grants DMS-0707647 and DMS-100254.

\bibliographystyle{amsalpha}
\bibliography{steadyWW}

\end{document}